\def\ooal{\alpha^{-1}}
\def\ulal{\underline{\alpha}}
\def\olal{\overline{\alpha}}
\def\op{\overline{p}}
\def\opsi{\overline{\psi}}
\def\oq{\overline{q}}
\newcommand{\Om}{\Omega}
\newcommand{\D}{\Delta}
\newcommand{\pn}{p^n}
\DeclareMathOperator*{\esssup}{ess\,sup}
\newcommand{\pt}{p_t}
\newcommand{\ptn}{p_t^n}
\newcommand{\ptt}{p_{tt}}
\newcommand{\pttn}{p_{tt}^n}
\newcommand{\ddt}{\frac{\textup{d}}{\textup{d}t}}
\newcommand{\dt}{\, \textup{d} t}
\newcommand{\ds}{\, \textup{d} s }
\newcommand{\dx}{\, \textup{d} x}
\newcommand{\dxs}{\, \textup{d}x\textup{d}s}
\newcommand{\intTO}{\int_0^t \int_{\Omega}}
\newcommand{\intO}{\int_{\Omega}}
\newcommand{\nLtwo}[1]{\|#1\|_{L^2}}
\newcommand{\nLthree}[1]{\|#1\|_{L^3}}
\newcommand{\nLfour}[1]{\|#1\|_{L^4}}
\newcommand{\nLsix}[1]{\|#1\|_{L^6}}
\newcommand{\nLinf}[1]{\|#1\|_{L^\infty}}
\newcommand{\nLtwoLtwo}[1]{\|#1\|_{L^2 (L^2)}}
\newcommand{\nLtwoLfour}[1]{\|#1\|_{L^2 (L^4)}}
\newcommand{\nLtwoLsix}[1]{\|#1\|_{L^2 (L^6)}}
\newcommand{\nLtwoLinf}[1]{\|#1\|_{L^2 (L^\infty)}}
\newcommand{\nLinfLtwo}[1]{\|#1\|_{L^\infty (L^2)}}
\newcommand{\nLinfLthree}[1]{\|#1\|_{L^\infty (L^3)}}
\newcommand{\nLinfLinf}[1]{\|#1\|_{L^\infty (L^\infty)}}
\newcommand{\nLinfHtwo}[1]{\|#1\|_{L^\infty (H^2)}}
\newcommand{\nLinfHthree}[1]{\|#1\|_{L^\infty (H^3)}}
\newcommand{\vardbtilde}[1]{\tilde{\raisebox{0pt}[0.85\height]{$\tilde{#1}$}}} 
\newcommand{\prodLtwo}[2]{(#1, #2)_{L^2}}
\newcommand{\R}{\mathbb{R}} 
\newcommand{\N}{\mathbb{N}} 
\newcommand{\Ltwo}{L^2(\Omega)}
\newcommand{\Hone}{H^1(\Omega)}
\newcommand{\Hthree}{H^3(\Omega)}
\newcommand{\Honetwo}{{H_\diamondsuit^2(\Omega)}}
\newcommand{\Honethree}{{H_\diamondsuit^3(\Omega)}}
\newcommand{\Honefour}{{H_\diamondsuit^4(\Omega)}}
\newcommand{\spaceW}{X^{\textup{W}}}
\newcommand{\spaceK}{X^{\textup{K}}}
\newcommand{\LinfLinf}{L^\infty(0,T; L^\infty(\Omega))}
\newcommand{\CHone}{C_{H^1, L^4}}
\newcommand{\CHonesix}{C_{H^1, L^6}}
\newcommand{\CHtwo}{C_{H^2, L^\infty}}
\newcommand{\CHtwoeight}{C_{H^2, L^8}}
\newcommand{\CPF}{C_{\textup{PF}}}
\newtheorem{theorem}{Theorem}
\newtheorem{proposition}{Proposition}
\newtheorem*{assumption*}{Assumptions}
\newtheorem{remark}{Remark}
\numberwithin{lemma}{section}
\numberwithin{proposition}{section}
\numberwithin{theorem}{section}
\numberwithin{equation}{section}
\newcommand{\leqnomode}{\tagsleft@true}
\newcommand{\reqnomode}{\tagsleft@false}
\newcommand{\TK}{\mathcal{T}_{\textup{K}}} 
\newcommand{\MK}{M_{\textup{K}}} 
\newcommand{\RK}{R_{\textup{K}}} 
\definecolor{grey}{rgb}{0.5,0.5,0.5}
\newcommand{\abssig}{|\sigma|}
\title[Parabolic approximation of quasilinear wave equations]{Parabolic approximation of quasilinear wave equations with applications in nonlinear acoustics}
\subjclass[2010]{35L05, 35L72}
\keywords{quasilinear wave equations, parabolic approximation, nonlinear acoustics, Westervelt's equation, Kuznetsov's equation}
\author[B. Kaltenbacher and V. Nikoli\'c]{\bfseries Barbara Kaltenbacher$^1$ and Vanja Nikoli\'c$^2$}
\address{  
	Department of Mathematics\\  
	Alpen-Adria-Universit\"at Klagenfurt 
	\\ 9020 Klagenfurt, Austria}
\email{barbara.kaltenbacher@aau.at}
\address{ 
	Department of Mathematics \\ 
	Radboud University   \\ 
	Heyendaalseweg 135,
	6525 AJ Nijmegen, The Netherlands}
\email{vanja.nikolic@ru.nl} 
\begin{document}
	\maketitle     
	\vspace*{-4mm}      
	\begin{center}
		{\footnotesize
			$^1$Department of Mathematics, Alpen-Adria-Universit\"at Klagenfurt, Austria \\
			$^2$Department of Mathematics, Radboud University, The Netherlands
		}
	\end{center}
	\vspace*{4mm}
\begin{abstract}
This work deals with the convergence analysis of parabolic perturbations to quasilinear wave equations on smooth bounded domains. In particular, we consider wave equations with nonlinearities of quadratic type, which cover the two classical models of nonlinear acoustics, the Westervelt and Kuznetsov equations. By employing a high-order energy analysis, we obtain convergence of their solutions to the corresponding inviscid equations' solutions in the standard energy norm with a linear rate, assuming small data and a sufficiently short time. The smallness of initial data can, however, be imposed in a lower-order norm than the one needed in the energy analysis. It arises only from ensuring the non-degeneracy of the studied wave equations. In addition, we address the open questions of local well-posedness of the two classical models on bounded domains with a non-negative, possibly vanishing sound diffusivity.
\end{abstract}
	
	\section{Introduction}
	\indent  This work is concerned with the convergence analysis of solutions to nonlinear wave equations in the form of
	\begin{equation} \label{Westervelt_like}
	\begin{aligned}
	& \alpha (p)p_{tt}-c^{2}\Delta p-b \Delta p_t=f(p_t)
	\end{aligned}
	\end{equation}
	as well as 
	\begin{equation}   \label{Kuznetsov_like}
	\begin{aligned}
	& \alpha(\psi_t)\psi_{tt}-c^{2}\Delta \psi-b \Delta \psi_{t}=f(\nabla \psi, \nabla \psi_{t}),
	\end{aligned}
	\end{equation}
	as the damping parameter $b$ tends to zero. These equations can be understood as perturbations of quasi-linear hyperbolic models with $b=0$ into parabolic-like equations with $b>0$. The error analysis of such perturbations is of interest, for example, in numerical simulations, where artificial viscosity is needed to resolve the nonlinear steepening of quasilinear waves by taming spurious oscillations at the wave peaks; see, for example,~\cite{hoffelner2001finite, tsuchiya1992simulation, soneson2019parabolic} for more details on this issue. {The present work also addresses the well-posedness of \eqref{Westervelt_like} and \eqref{Kuznetsov_like} on bounded domains which holds uniformly for $b\geq 0$, under the assumption of having smooth data that are small in possibly low-order topology}.\\%; in particular, of \eqref{Kuznetsov_like} in the hyperbolic case $b=0$.} \\
	\indent We are particularly motivated in our research by applications of high-intensity ultrasonic waves, whose behavior is inherently nonlinear and modeled by nonlinear wave equations of this type. These applications range from non-invasive treatments of different medical disorders to non-destructive material testing and urge the need to accurately model and analyze ultrasonic waves in different settings. The thermoviscous dissipation of fluids is known to play an important role in the propagation of sound waves. In classical acoustic models, such as the Westervelt and Kuznetsov equations, these effects are encompassed within the medium parameter $b$, often referred to as the sound diffusivity; see~\cite{lighthill2001waves, westervelt1963parametric, kuznetsov1971equations, kaltenbacher2007numerical}. Thus, the present work can also be understood as an investigation into how the behavior of solutions to nonlinear acoustic equations that model ultrasonic propagation changes with the sound diffusivity. We refer to Section~\ref{Sec:ProblemSetting} below for further details on modeling in nonlinear acoustics. Formally, setting $b =0$ in \eqref{Westervelt_like} and \eqref{Kuznetsov_like} corresponds to lossless nonlinear sound propagation through inviscid fluids or gases; see, e.g., \cite{ChristovChristovJordan2007} and the references therein.  Corresponding to the applications in nonlinear acoustics, we here focus on quadratic nonlinearities \[f(p_t)=kp_t^2, \quad \alpha(p)=1-kp\] in equation \eqref{Westervelt_like} and \[f(\nabla \psi,\nabla \psi_{t})=\nabla \psi\cdot\nabla \psi_{t},\quad \alpha(\psi_t)=1-\kappa\psi_t\] in equation \eqref{Kuznetsov_like}, but wish to emphasize that our analysis carries over to more general nonlinearities under appropriate growth conditions on $f$.
	\\
	\indent We structure the paper as follows. In Section~\ref{Sec:ProblemSetting}, we set up the problem, provide an overview of related results, and discuss the main contributions of the present work. Section~\ref{Sec:LinWest} is dedicated to the well-posedness of a linear version of equation \eqref{Westervelt_like}. In particular, the main aim of that section is to arrive at a higher-order energy bound that is uniform with respect to the parameter $b$. In Section~\ref{Sec:West}, we employ Banach's fixed-point theorem on a suitably defined mapping to analyze \eqref{Westervelt_like} when the data are sufficiently smooth and small.  Section~\ref{Sec:LimitWest} contains the convergence analysis for this equation in the standard energy norm as well as a higher-order norm as $b \rightarrow 0^+$ for small data. In Section~\ref{Sec:Kuznetsov}, we extend our energy analysis to \eqref{Kuznetsov_like}. Section~\ref{Sec:LimitKuzn} is then devoted to the convergence rate of solutions to this equation as $b \rightarrow 0^+$. We conclude the paper with a discussion on the obtained results and an outlook on future work. 
	\section{Modeling and problem setting} \label{Sec:ProblemSetting}
	Propagation of nonlinear sound waves through thermoviscous fluids is often modeled by the Kuznetsov equation
	\begin{equation}
	\psi_{tt}-c^{2}\Delta \psi-b \Delta \psi_{t}=\left( \frac{1}{c^{2}}\frac{B}{2A}\psi_{t}^{2}+|\nabla \psi|^{2}\right)_t,
	\end{equation}
	given in terms of the acoustic velocity potential $\psi=\psi(x,t)$; see~\cite{kuznetsov1971equations, kaltenbacher2007numerical} for its derivation. The coefficient $c>0$ stands for the speed of sound in the fluid, whereas the ratio $B/A$ arises from the equation of state, which relates variations of the acoustic pressure to the ones of medium density. The dissipative mechanisms due to the viscosity and thermal conductivity of the medium are accounted for via the coefficient $b$, known as the sound diffusivity; cf.~\cite{lighthill2001waves}. \\ 
	\indent In many instances, cumulative nonlinear effects in sound propagation dominate the local ones. This the case, for example, when the traveling distance becomes much greater than a wavelength; see~\cite{hamilton1998nonlinear} for a detailed discussion on this topic. The Westervelt equation is then a suitable second-order approximation of the full set of governing equations. In terms of the acoustic potential, it is given by
	\begin{equation} \label{West_potential}
	\psi_{tt}-c^{2}\Delta \psi-b \Delta \psi_{t}=\frac{1}{c^{2}}\left(\frac{B}{2A}+1\right)(\psi_t^2)_t.
	\end{equation}
	However, more commonly it is expressed via the acoustic pressure $p= \varrho \psi_t$ as
	\begin{equation}
	p_{tt}-c^{2}\Delta p-b \Delta p=\frac{1}{\varrho c^{2}}\left(\frac{B}{2A}+1\right)(p^2)_{tt},
	\end{equation}
	where $\varrho$ denotes the medium density; see~\cite{westervelt1963parametric} for its original derivation in the lossless form. The case $b=0$ corresponds to sound propagation through lossless media. \\
	\indent Motivated by these classical models in nonlinear acoustics, we here study the behavior of solutions to nonlinear wave equations in the form of
	\begin{equation} \label{Kuznt}
	\psi_{tt}-c^{2}\Delta \psi-b \Delta \psi_{t}= \frac12 (\kappa \psi_t^2+ \sigma |\nabla \psi|^2)_t, 
	\end{equation}
	and
	\begin{equation} \label{West}
	p_{tt}-c^{2}\Delta p-b \Delta p= \frac12 (k p^2)_{tt}
	\end{equation}
	as the positive parameter $b$ vanishes. Particular choices of $k$, $\kappa$, $\sigma \in \R$ will lead to one of the above classical models of nonlinear acoustics.  Henceforth, we will refer to \eqref{Kuznt} and \eqref{West} as the Kuznetsov and Westervelt equation, respectively. 
	%\\ \indent 
	\subsection{{Related literature}} {
		The global well-posedness of equations \eqref{Kuznt} and \eqref{West}  for
		fixed positive $b>0$ in the case of homogeneous Dirichlet boundary conditions
		and small, smooth initial data has been studied in~\cite{mizohata1993global, kawashima1992global} and \cite{kaltenbacher2009global, Wilke}, respectively. In the
		inviscid case with $b=0$, smooth solutions can be expected to exist only locally in time. This
		observation has been made rigorous in~\cite{dorfler2016local} for the Westervelt equation. The Kuznetsov and Westervelt equations with $b>0$ have also been analyzed in~\cite{bongarti2021vanishing} and \cite{KaltenbacherNikolic}, respectively, in the limit of third-order acoustic models in time for a vanishing thermal relaxation parameter. \\
		\indent In $\R^n$, local well-posedness of the inviscid Westervelt and Kuznetsov equations in potential form follows as a particular case of the analysis performed in~\cite{hughes1977well}. The Cauchy problem for the Kuznetsov equation and its inviscid version has been further studied in~\cite{dekkers2017cauchy} for small enough data. In~\cite{tani2017mathematical}, convergence of solutions in $\R^3$ in the vanishing sound diffusivity limit for an enhanced version of the Kuznetsov equation that involves additionally a $\psi_t\D \psi$ term on the right-hand side has been studied. Compared to our results in Theorem~\ref{Thm:Kuzn_WeakLimit} below, convergence is obtained in $d=3$ in a higher-order norm for small initial data in $H^{2+l}(\R^3) \times H^{1+l}(\R^3)$, where $l >1$, $l-1/2 \notin 2\N$; see~\cite[Theorem 4]{tani2017mathematical}. }
	\subsection{Main contributions}
	{The novel contributions of our work pertain to the convergence analysis of the solutions to \eqref{Kuznt} and \eqref{West} with homogeneous Dirichlet conditions as $b \rightarrow 0^+$ and where data are allowed to be large in higher-order topologies}. {We prove} convergence of solutions in the standard energy norm with a linear rate to the solutions of the corresponding inviscid equations:
	\begin{equation}
	\begin{aligned}
	&\|p^{(b)}-p^{(0)}\|_{\textup{E}}\lesssim b, \\ 
	& \|\psi^{(b)}-\psi^{(0)}\|_{\textup{E}}\lesssim b, \\ 
	\end{aligned}
	\end{equation}
	as $b \rightarrow 0^+$. These results are contained in Theorems~\ref{Thm:West_WeakLimit} and \ref{Thm:Kuzn_WeakLimit}  below. Provided that the final time is short enough, the smallness conditions on the initial data can be imposed in a lower-order norm than the one needed in the energy analysis. We expect these estimates to be of practical relevance in numerical simulations of quasilinear waves, where artificial dissipation is commonly introduced to combat the spurious oscillations that arise as the wavefront steepens, also known as Gibbs oscillations; cf.~\cite{walsh2007finite, kaltenbacher2007numerical}. {As a by-product of our analysis, we also address the questions of local well-posedness of \eqref{Kuznt} and \eqref{West} on bounded domains for $b \geq 0$, which, to the best of our knowledge, have not been answered before for $b=0$ in the Kuznetsov case \eqref{Kuznt}. Further, compared to the existing result on analysis of the inviscid Westervelt equation~\cite{dorfler2016local}, we allow for arbitrarily large data in $H^2(\Omega) \times H^1(\Omega)$, as relevant in practical ultrasonic applications.} We point the readers to Theorems~\ref{Thm:West_Wellposedness} and~\ref{Thm:WellpKuzn} below for the respective results.  \\
	\indent The convergence analysis of quasi-linear wave equations appears to be fundamentally different from the linear analysis performed in~\cite{showalter1976regularization} in that it requires stronger regularity of initial data to obtain bounds that are independent of $b$. Namely, \[(p(0),p_t(0))\in H^3(\Omega)\times H^2(\Omega)\] in the Westervelt, and \[(\psi(0),\psi_t(0))\in H^4(\Omega)\times H^3(\Omega)\] in the Kuznetsov case, as compared to \[(p(0),p_t(0))\in \Hone\times\Ltwo\] in the linear case. This higher regularity assumption allows us to obtain an $O(b)$ convergence rate compared to~\cite[Proposition 7]{showalter1976regularization}, where an $O(\sqrt{b})$ convergence rate is established with $H^1(0,T;H^1(\Omega))$ regularity of the solution (that is, $H^2(\Omega)\times H^1(\Omega)$ -- actually a bit less -- regularity of the initial data $(p(0),p_t(0))$). Note that we also provide an $O(\sqrt{b})$ rate under the initial regularity $(p(0),p_t(0))\in H^3(\Omega)\times H^2(\Omega)$ for the Westervelt equation in a stronger norm than the standard energy one; cf. Remark~\ref{rem:rate}.
	\subsection{Notation and useful inequalities}
	For simplicity of notation, we often omit the time interval and the spatial domain when writing norms; i.e., $\|\cdot\|_{L^p (L^q)}$ denotes the norm on $L^p(0,T;L^q(\Omega))$. While $H^s(\Omega)$ denotes the usual Sobolev spaces, we will denote by 
	\begin{equation} \label{sobolev_withtraces}
	\begin{aligned}
	\Honetwo=&\,H_0^1(\Omega)\cap H^2(\Omega),\\
	\Honethree=&\, \left\{u\in H^3(\Omega)\,:\, \mbox{tr}_{\partial\Omega} u = 0, \  \mbox{tr}_{\partial\Omega} \D u = 0\right\},\\ 
	\Honefour=&\,\left\{u\in H^4(\Omega)\,:\, \mbox{tr}_{\partial\Omega} u = 0, \  \mbox{tr}_{\partial\Omega} \D u = 0 \right\},
	\end{aligned}
	\end{equation}
	spaces of functions $H_0^2(\Omega)\subset \Honetwo\subset H^2(\Omega)$, $H_0^3(\Omega)\subset \Honethree\subset H^3(\Omega)$, and $H_0^4(\Omega)\subset \Honefour\subset H^4(\Omega)$ satisfying certain boundary conditions. By $\hookrightarrow\hookrightarrow$ we denote compact embeddings, by $\hookrightarrow$ continuous ones.
	\\
	\indent Throughout the paper, we often employ 
	boundedness of the operator $(-\Delta)^{-1}:L^2(\Omega)\to \Honetwo$ and the Poincar\'{e}--Friedrichs inequality: 
	\begin{equation}\label{ellreg-PF}
	\begin{aligned}
	&\|v\|_{H^2(\Omega)}\leq C_{(-\Delta)^{-1}} \|-\Delta v\|_{L^2(\Omega)}, \quad &&v \in \Honetwo; \\
	&\|v\|_{H^1(\Omega)}\leq \CPF \|\nabla v\|_{L^2(\Omega)}, \quad &&v \in H^1_0(\Omega).
	\end{aligned}
	\end{equation}
	Furthermore, we rely on the continuous embeddings 
	$H^1(\Omega)\hookrightarrow L^6(\Omega)$ and $H^2(\Omega) \hookrightarrow L^\infty(\Omega)$:
	\begin{equation}\label{embeddigs}
	\begin{aligned}
	&\|v\|_{L^6(\Omega)}\leq \CHonesix \|\nabla v\|_{L^2(\Omega)}, \quad && v \in H^1_0(\Omega)\\
	&\|v\|_{L^\infty(\Omega)}\leq C_{H^2, L^\infty} \|\D v\|_{L^2(\Omega)}, \quad && v \in \Honetwo.
	\end{aligned}
	\end{equation}
	\indent We often use $x \lesssim y$ to denote $x \leq C y$, where the constant $C>0$ does not depend on the sound diffusivity $b$, but may depend on other medium parameters and the final time $T$.
%%%%%%%%%%%%%%%%%%%%%%%%%%%%%%%%%%%%%%%%%%%%
\section{Energy analysis of the corresponding linear problem for the Westervelt equation}  \label{Sec:LinWest}
\indent In this section, we study the linear version of the initial-boundary value problem for the Westervelt equation:
\begin{equation} \label{ibvp_linWest}
\left\{
\begin{aligned}
\alpha \ptt - b \D p_t - c^2\D p - k q_t p_t =&\, 0  &&\quad\text{ in }\Omega\times(0,T), \\[1mm]
p=&\,0  &&\quad\text{ on } \partial \Omega\times(0,T),\\[1mm]
(p, p_t)=&\,(p_0, p_1)  &&\quad\mbox{ in }\Omega\times \{0\},
\end{aligned} \right.
\end{equation}
where $q$ is a given function, $\alpha= 1-kq$, and $k \in \R $. Our aim is to obtain a unique solution that is uniformly bounded with respect to $b$. We analyze the problem under the non-degeneracy assumption on the coefficient $\alpha=\alpha(x,t)$. In other words, we assume that there exist $\olal$, $\ulal>0$, such that
\begin{equation} \label{non-degeneracy_assumption}
\ulal \leq \alpha(x, t)\leq \olal \ \mbox{ on }\Omega \ \ \text{  a.e. in } \Om \times (0,T).
\end{equation}
The coefficient $\alpha$ is required to have the following regularity:
\begin{equation}\label{eq:alphagammaf_reg_Wes}
\begin{aligned} 
&\alpha \in \spaceW_{\alpha}=L^\infty(0,T; H^3(\Om)) \cap W^{1, \infty}(0,T; H^2(\Om)).
\end{aligned}
\end{equation}
Note that then
\begin{equation}\label{eq:regq}
q_t=-\frac{1}{k}\alpha_t \in L^{\infty}(0,T; H^2(\Om).
\end{equation}
The space $\spaceW_{\alpha}$ is equipped with the norm
\begin{equation}
\begin{aligned}
\|\alpha\|_{\spaceW_{\alpha}}= \esssup_{t \in (0,T)}\|\alpha(t)\|_{H^3}+\esssup_{t \in (0,T)} \|\alpha_t(t)\|_{H^2}.
\end{aligned}
\end{equation}
Moreover, we impose the following regularity on the initial data: 
\begin{equation}\label{IC_Westervelt}
(p_0, p_1)\in \spaceW_0=\Honethree \times \Honetwo.
\end{equation} 
We claim that problem \eqref{ibvp_linWest} has a unique solution which satisfies an energy bound that does not degenerate as $b \rightarrow 0^+$. {Note that since we are interested in the limit as $b \rightarrow 0^+$, we may focus our attention on an interval $[0, \bar{b})$ for some $\bar{b} >0$ without loss of generality.}
\begin{proposition} \label{Prop:LinWest}
	Assume that the domain $\Omega \subset \R^n$, where $n \in \{1, 2, 3\}$, is bounded and $C^3$ regular. Let $b \in [0, \bar{b})$ and let $T>0$ be a given final time. Assume that the non-degeneracy condition \eqref{non-degeneracy_assumption} and the regularity assumptions \eqref{eq:alphagammaf_reg_Wes} and \eqref{IC_Westervelt} hold. Then there exists a unique  solution $p$ of the problem \eqref{ibvp_linWest},  such that
	\begin{equation}\label{regularity}
	\begin{aligned}
	p \in \, \spaceW =& \,\begin{multlined}[t] L^\infty(0,T; \Honethree) \cap W^{1, \infty}(0,T; \Honetwo) \\
	\cap H^2(0,T; H^1(\Om)).
	\end{multlined}
	\end{aligned}
	\end{equation}
	If $b>0$, then additionally $p \in H^1(0,T; \Honethree)$. On the other hand, if $b=0$, then additionally $p\in W^{2,\infty}(0,T; H^1(\Om))$.\\
	\indent Furthermore, the solution fulfills the estimate
	\begin{equation} \label{LinWest:Main_energy_est}
	\begin{aligned}
	& \begin{multlined}[t] \nLtwoLtwo{\nabla \ptt}^2+\esssup_{t \in (0,T)} \nLtwo{\D \pt (t)}^2\,+\esssup_{t \in (0,T)} \nLtwo{\nabla \D p(t)}^2 
	+b\nLtwoLtwo{\nabla \D \pt}^2
	\end{multlined}\\
	\lesssim&\, \begin{multlined}[t] \nLtwo{\nabla \D p_0}^2+\nLtwo{\D p_1}^2,
	\end{multlined}
	\end{aligned}
	\end{equation}
	where the hidden constant tends to infinity as $T \rightarrow \infty$, but does not depend on $b$.
	
	If additionally $p_1 \in \Honethree$, 
	{$q$, $\alpha \in \spaceW$ with 
		\begin{equation}\label{q0p0}
		q(0)=p_0\mbox{ and }q_t(0)=p_1,
		\end{equation}
		and the compatibility condition 
		\begin{equation}\label{compat_West}
		\ptt(0) =\alpha(0)^{-1} (b \D p_1 +c^2\D p_0 +k q_t(0)p_1)
		\end{equation}
		holds,}  we obtain the enhanced estimate
	\begin{equation} \label{LinWest:Main_energy_est_higher}
	\begin{aligned}
	& \begin{multlined}[t] \nLtwoLtwo{p_{ttt}}^2+\esssup_{t \in (0,T)}\nLtwo{\nabla \ptt(t)}^2+\esssup_{t \in (0,T)} \nLtwo{\D \pt (t)}^2
	\\+\esssup_{t \in (0,T)} \nLtwo{\nabla \D p(t)}^2 
	+b \int_0^{T} \nLtwo{\D \ptt}^2\dt
	+b \int_0^{T} \nLtwo{\nabla \D \pt}^2\dt
	\end{multlined}\\
	\lesssim&\, \begin{multlined}[t] \nLtwo{\nabla \D p_0}^2+\nLtwo{\D p_1}^2+b \nLtwo{\nabla \D p_1}^2.
	\end{multlined}
	\end{aligned}
	\end{equation}
	In case $b=0$, the estimate \eqref{LinWest:Main_energy_est_higher} holds without the additional assumption $p_1\in \Honethree$. {If $b>0$, then additionally $p\in W^{1,\infty}(0,T;\Honethree)$.}
\end{proposition}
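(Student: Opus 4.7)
The plan is to construct the solution via a Faedo--Galerkin scheme based on the orthonormal basis $\{\phi_i\}$ of $\Ltwo$ consisting of eigenfunctions of $-\D$ with homogeneous Dirichlet boundary data; each $\phi_i$ lies in $\Honefour$, hence is compatible with the traces built into \eqref{sobolev_withtraces}. At Galerkin level $n$, \eqref{ibvp_linWest} reduces to a linear ODE whose mass matrix $M^n_{ij}(t)=\intO \alpha(t)\phi_i\phi_j\dx$ is invertible thanks to \eqref{non-degeneracy_assumption}, so approximate solutions $\pn$ exist locally and extend to $[0,T]$ as soon as the $n$-uniform a priori bounds described below are established.

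The heart of the proof is a pair of energy identities that stay uniform as $b\to 0^+$. For \eqref{LinWest:Main_energy_est} I would apply $-\D$ to the Galerkin equation and test the result with $\D\ptn$. After expanding $-\D(\alpha\pttn)=-\alpha\D\pttn-2\nabla\alpha\cdot\nabla\pttn-\pttn\D\alpha$ and integrating by parts in space (using that $\D\pn$ and $\D\ptn$ vanish on $\partial\Om$), the three leading contributions become $\tfrac{1}{2}\ddt \intO \alpha(\D\ptn)^2\dx$, $b\nLtwo{\nabla\D\ptn}^2$, and $\tfrac{c^2}{2}\ddt\nLtwo{\nabla\D\pn}^2$, matching exactly the three dissipative quantities on the left of \eqref{LinWest:Main_energy_est}. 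The commutator-type remainders from $[\D,\alpha]\pttn$ and the coupling term $k\D(q_t\ptn)$ are controlled via \eqref{embeddigs} together with $\alpha\in\spaceW_\alpha$ and $q_t\in L^\infty(0,T;H^2(\Om))$; either small-constant absorption or Grönwall's inequality closes them with constants independent of $b$. The remaining $\nLtwoLtwo{\nabla\ptt}$ bound is read off directly from the equation by writing $\alpha\ptt=b\D\pt+c^2\D p+kq_t\pt$, differentiating in space, and invoking the quantities already controlled; crucially the $b\nabla\D\pt$ piece contributes only $\sqrt b$ times an $\LtwoLtwo$-norm that is already bounded uniformly in $b$ by the previous step.

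For the enhanced estimate \eqref{LinWest:Main_energy_est_higher}, I would differentiate the equation once in time and test the resulting identity twice: with $-\D\pttn$ and with $\ptttn$. The first test produces $\tfrac{1}{2}\ddt\intO \alpha|\nabla\pttn|^2\dx$, $b\nLtwo{\D\pttn}^2$, and $\tfrac{c^2}{2}\ddt\nLtwo{\D\ptn}^2$, while the second yields $\ulal\nLtwo{\ptttn}^2$; together these are exactly the four $t$-dependent left-hand-side quantities in \eqref{LinWest:Main_energy_est_higher}. The starting value $\nLtwo{\nabla\ptt(0)}$ is obtained by taking $\nabla$ of the compatibility condition \eqref{compat_West}; the $b\D p_1$ piece then produces precisely $b\nLtwo{\nabla\D p_1}^2$ on the right-hand side, which explains both the requirement $p_1\in\Honethree$ when $b>0$ and why this requirement is unnecessary in the lossless case $b=0$. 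The assumption $q,\alpha\in\spaceW$ supplies the extra time regularity (in particular $\alpha_{tt},q_{tt}$) needed to treat the new remainder terms by the same absorption--Grönwall strategy used in the first estimate.

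With uniform-in-$n$ bounds in hand, standard weak/weak-$*$ compactness in the spaces appearing in \eqref{regularity} produces a subsequential limit $p$; it is straightforward to identify $p$ as a solution of \eqref{ibvp_linWest}, and the estimates transfer to $p$ by lower semicontinuity. Uniqueness is immediate by linearity and the first estimate applied to the difference of two solutions. The extra regularity claims are then readily verified: $\sqrt b\,\nabla\D\pt\in\LtwoLtwo$ gives $p\in H^1(0,T;\Honethree)$ when $b>0$; under the enhanced bounds, rewriting $b\nabla\D\pt=\nabla(\alpha\ptt)-c^2\nabla\D p-k\nabla(q_t\pt)$ and using $\nabla\ptt\in L^\infty(0,T;\Ltwo)$ upgrades this further to $p\in W^{1,\infty}(0,T;\Honethree)$; and for $b=0$ the reduced equation $\alpha\ptt=c^2\D p+kq_t\pt$ combined with the $L^\infty$-in-time bounds on the right-hand side yields $p\in W^{2,\infty}(0,T;H^1(\Om))$. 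The main obstacle will be the bookkeeping: every commutator remainder in the two energy identities must be shown to be absorbable \emph{without} hidden $b$-dependent constants, so that Grönwall's inequality delivers bounds that genuinely survive the singular limit $b\to 0^+$.
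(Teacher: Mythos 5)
Your proposal follows essentially the same route as the paper's proof: a Faedo--Galerkin scheme in the Laplacian eigenbasis, the higher-order energy identity obtained by testing with $\Delta^2 p_t^n$ (your variant of applying $-\Delta$ and testing with $\Delta p_t^n$ is the same identity, merely placing the $\alpha$-weight on $\|\Delta p_t\|_{L^2}$ rather than on $\|\nabla\Delta p\|_{L^2}$ as the paper does after dividing by $\alpha$), recovery of the $\nabla p_{tt}$ bound directly from the equation, the time-differentiated problem tested with $p_{ttt}^n-\Delta p_{tt}^n$ together with the compatibility condition to control $\nabla p_{tt}(0)$, and weak limits with lower semicontinuity. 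The one step to tighten is uniqueness: the higher-order estimate is established only at the Galerkin level, so for the difference of two solutions in $\spaceW$ one should instead test the homogeneous equation with $p_t$ and apply Gronwall's inequality, which is exactly what the paper does.
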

\begin{proof}
	We carry out the proof via energy analysis of suitable Galerkin approximations in space. We refer to, for example,~\cite{evans2010partial, roubivcek2013nonlinear} for general details on the method and~\cite{KaltenbacherNikolic} for its use in the analysis of nonlinear acoustic models. Compared to similar energy arguments in the literature, the challenge here is to arrive at a bound that does not degenerate as sound diffusivity approaches zero.\\
	\indent We note that $p \in \spaceW$ implies
	\begin{equation}
	\begin{aligned}
	p \in C_w([0,T]; \Honethree), \qquad \pt \in C_w([0,T]; \Honetwo),
	\end{aligned}
	\end{equation}
	and so the initial conditions are meaningful. \\
	\paragraph{\bf Semi-discretization in space} Take $\{w_i\}_{i \in \N}$ to be the complete set of (smooth) eigenfunctions of $-\Delta$ on $H_0^1(\Om)$.  Fix $n \in \mathbb{N}$ and denote $V_n=\text{span}\{w_1, \ldots, w_n\}$. We seek an approximate solution in the form of
	\begin{equation}
	\begin{aligned}
	p^n=& \, \displaystyle \sum_{i=1}^n \xi_i(t)w_i(x),\\
	\end{aligned}
	\end{equation}
	where the unknown coefficients are $\xi_i:(0,T) \rightarrow \mathbb{R}$, $i \in [1,n]$. We choose the approximate initial data
	\begin{equation}
	\begin{aligned}
	(p^n(0), p^n_t(0))=(p^n_0, p^n_1) \in V_n \times V_n
	\end{aligned}
	\end{equation}
	such that
	\begin{equation}
	\begin{aligned}
	& p^n_0 \rightarrow p_0 \quad \text{ in } \Honethree, \\
	& p^n_1 \rightarrow p_1 \quad \text{ in } \Honetwo,
	\end{aligned}
	\end{equation}
	which is possible since $V_n$ is dense in $\Honethree$.  We then study the following semi-discrete approximation of our problem:
	\begin{equation} \label{ibvp_semi-discrete}
	\begin{aligned} 
	\begin{cases}
	(\ptt^n - \tfrac{b}{\alpha} \D p_t^n - \tfrac{c^2}{\alpha}\D p^n - \tfrac{k}{\alpha} q_t p_t^n,  \phi)_{L^2} = 0, 
	%BK 2020-10-19 (\alpha \ptt^n - b \D p_t^n - c^2\D p^n - k q_t p_t^n,  \phi)_{L^2} = 0, 
	\\[1mm]
	\text{for every $\phi \in V_n$ pointwise a.e. in $(0,T)$}, \\[1mm]
	(p^n(0), \pt^n(0))=(p^n_0, p^n_1).
	\end{cases}
	\end{aligned}
	\end{equation}
	To arrive at a matrix equation, we also introduce matrices $M^n=[M_{ij}]$, $K^n=[K_{ij}]$ and $C^n=[C_{ij}]$ as
	\begin{equation}
	\begin{aligned}
	& M^n_{ij}=( w_i, w_j)_{L^2}, \\
	& K^n_{ij}(t)=-(\tfrac{c^2}{\alpha(t)}\Delta w_i, w_j)_{L^2},\quad C^n_{ij}(t)=-(\tfrac{b}{\alpha(t)}\Delta w_i, w_j)_{L^2}-\prodLtwo{\tfrac{k}{\alpha(t)}q_t(t) w_i}{w_j},\\
	\end{aligned}
	\end{equation}
	for $i,j \in [1,n]$. By setting $\xi^n=[\xi_{1} \ldots \xi_{n}]^T$, $\xi_0^n=[\xi_{1, 0} \ldots \xi_{n, 0}]^T$, and $\xi_1^n=[\xi_{1, 1} \ldots \xi_{n,n}]^T$,  problem \eqref{ibvp_semi-discrete} can be further rewritten as
	\begin{equation} \label{ODE_system}
	\begin{aligned}
	\begin{cases}
	M^n %BK 2020-10-19 (t)
	\xi^n_{tt}+C^n(t) \xi_t^n+K^n(t) 
	\xi^n=0, \\
	(\xi^n(0), \xi^n_t(0))=(\xi^n_0, \xi^n_1).
	\end{cases}
	\end{aligned}
	\end{equation}
	After additionally rewriting \eqref{ODE_system} as a first-order system and owing to the fact that the matrix $M^n$ is positive definite, existence of an absolutely continuous solution $[\xi^n, \xi^n_t]^{T}$ on $[0, T_n]$ for some $T_n \leq T$ follows by ODE existence theory; see, for example,~\cite[\S 1]{roubivcek2013nonlinear}. To see that $\xi^n \in H^2(0, T_n)$, we can employ a standard bootstrap argument; cf.~\cite[\S 3]{KaltenbacherNikolic}. We thus conclude that problem \eqref{ibvp_semi-discrete} has a solution $p^n \in H^2(0,T_n; V_n) \subset C^1([0,T_n]; V_n)$. \\
	\paragraph{\bf A priori estimates}
	Our next goal is to derive a uniform in $n$ estimate for the solution of \eqref{ibvp_semi-discrete}. Keeping in mind that we wish to arrive at a higher-order bound, we test the semi-discrete problem with $\phi=\Delta^2 p^n_t \in V_n$ to arrive at
	\begin{equation} \label{identity_1}
	\begin{aligned} 
	(\ptt^n - \tfrac{b}{\alpha} \Delta p_t^n -\tfrac{c^2}{\alpha}\Delta p^n - \tfrac{k}{\alpha} q_t p_t^n,  \Delta^2 p^n_t)_{L^2} = 0,
	\end{aligned}
	\end{equation}
	a.e. in time. Whenever the temporal argument is skipped here and in the following, we suppose the stated (in)equality to hold for almost every time instance in $(0,T_n)$. We have chosen our discrete space so that $\ptt^n= \Delta p^n_t=0$ on $\partial \Om$ and, therefore, we have
	\begin{equation}
	\begin{aligned}
	%-\prodLtwo{\ooal \D \ptn}{\D^2 \ptn} &= \prodLtwo{\D \ptn \nabla[\ooal] + \ooal \nabla\D \ptn}{\nabla\D \ptn}\\[1ex]
	-\prodLtwo{\ooal \D p^n}{\D^2 \ptn} =&\, \prodLtwo{\D p^n \nabla[\ooal] + \ooal \nabla\D p^n}{\nabla\D \ptn}\\
	=&\,\begin{multlined}[t] \prodLtwo{\D p^n \nabla[\ooal]}{\nabla\D \ptn}
	\\+\frac12\ddt\prodLtwo{\ooal \nabla\D p^n}{\nabla\D p^n} -\frac12 \prodLtwo{[\ooal]_t \nabla\D p^n}{\nabla\D p^n}\end{multlined}\\
	=&\,\begin{multlined}[t]  -\prodLtwo{\nabla\D p^n\cdot\nabla[\ooal] + \D p^n \D[\ooal]}{\D \ptn}
	\\+\frac12\ddt\prodLtwo{\ooal \nabla\D p^n}{\nabla\D p^n} 
	-\frac12 \prodLtwo{[\ooal]_t \nabla\D p^n}{\nabla\D p^n}.\end{multlined}
	\end{aligned}
	\end{equation}
	Furthermore, it holds
	\begin{equation}
	\begin{aligned}
	\D [\ooal q_t\, \ptn] &= 2\ooal\nabla q_t\cdot \nabla \ptn + \ooal  q_t \D \ptn  + \ooal \ptn \D q_t\\
	&\qquad+\D[\ooal] \, q_t \ptn + 2\nabla[\ooal] \cdot\nabla q_t\, \ptn  + 2\nabla[\ooal] \cdot\nabla \ptn\, q_t.
	\end{aligned}
	\end{equation}
	Thus, we arrive at the following energy identity:
	\begin{equation} \label{energy_id}
	\begin{aligned}
	&\frac12\ddt\nLtwo{\D \ptn}^2 + b \nLtwo{\sqrt{\ooal}\nabla\D \ptn}^2 + c^2 \frac12\ddt \nLtwo{\sqrt{\ooal}\nabla\D p^n}^2 \\
	&=k \prodLtwo{2\ooal\nabla q_t\cdot \nabla \ptn + \ooal q_t \D \ptn  + \ooal  \ptn \D q_t 
		+ 2\nabla[\ooal] \cdot\nabla \ptn\, q_t + 2\nabla[\ooal] \cdot\nabla q_t\, \ptn  \\
		&\qquad+\D[\ooal] \, q_t \ptn}{ \D \ptn}\\
	&\quad- b \prodLtwo{\D \ptn \nabla[\ooal]}{\nabla\D \ptn}
	+ c^2 \prodLtwo{\nabla\D p^n\cdot\nabla[\ooal] + \D p^n \D[\ooal]}{\D \ptn}\\
	&\quad+ c^2\frac12 \prodLtwo{[\ooal]_t \nabla\D p^n}{\nabla\D p^n}.
	\end{aligned}
	\end{equation}
	We can rely on the following estimates to handle the $\alpha$ terms on the right:
	\[
	\begin{aligned}
	\nLinf{\ooal}\leq&\, \tfrac{1}{\ulal}, \\
	\nLinf{\nabla[\ooal]}=&\,\nLinf{\alpha^{-2}\nabla\alpha}\leq \tfrac{1}{\ulal^2} |k| \CHtwo \nLtwo{\nabla\D  q}=:\tilde{C}_1\,|k|\nLtwo{\nabla\D  q}\\
	\nLfour{\D [\ooal]}=&\,\nLfour{-2\alpha^{-3}|\nabla\alpha|^2 + \alpha^{-2}\D \alpha},  
	\\
	\leq&\, \tfrac{2}{\ulal^3} |k|^2 \CHtwoeight \nLtwo{\nabla\D  q}^2 
	+ \tfrac{1}{\ulal^2} |k| \CHone \nLtwo{\nabla\D  q}\\
	=&\,: |k|\nLtwo{\nabla\D  q}(\tilde{C}_2+\tilde{C}_3\,|k|\nLtwo{\nabla\D  q})
	\end{aligned}
	\]
	as well as
	\[
	\begin{aligned}
	\nLinf{[\ooal]_t}&=\nLinf{\alpha^{-2}\alpha_t}\leq \tfrac{1}{\ulal^2} |k| \CHtwo \nLtwo{\D  q_t} =:\tilde{C}_4 |k| \nLtwo{\D  q_t}.
	\end{aligned}
	\]
	Combined with Young's inequality, this approach yields the energy estimate
	\begin{equation}\label{enest}
	\begin{aligned}
	&\frac12\ddt\nLtwo{\D  \ptn}^2 + b \frac{1}{\olal}\,\nLtwo{\nabla\D  \ptn}^2 + c^2 \frac{1}{\olal}\,\frac12\ddt \nLtwo{\nabla\D \pn}^2 \\
	\leq&\, \begin{multlined}[t]\left(2\tfrac{|k|}{\ulal} ((\CHone)^2+\CHtwo) \right. \\ \left.
	+ |k|\nLtwo{\nabla\D  q} \left(4 \tilde{C}_1 \CPF \CHtwo
	+ \bigl(\tilde{C}_2+\tilde{C}_3\,|k|\nLtwo{\nabla\D  q}\bigr)\CHtwoeight\right) \right)\\
	\times\nLtwo{\D  q_t}\, \nLtwo{\D  \ptn}^2\\
	+ \frac{b\olal}{2} \tilde{C}_1^2\,|k|^2\nLtwo{\nabla\D  q}^2 \nLtwo{\D \ptn}^2+ \frac{b}{2\olal}\nLtwo{\nabla\D \ptn}^2\\
	+ c^2 
	|k|\nLtwo{\nabla\D  q} \bigl(\tilde{C}_1+(\tilde{C}_2+\tilde{C}_3\,|k|\nLtwo{\nabla\D  q}) \CHone\bigr)\\
	\times\nLtwo{\nabla\D \pn}\, \nLtwo{\D  \ptn}\\
	+ c^2 \tilde{C}_4 |k| \nLtwo{\D  q_t} \nLtwo{\nabla\D  \pn}^2. \end{multlined}
	\end{aligned}
	\end{equation}
	We next estimate $\nLtwo{\nabla \pttn}$. Since $\pn$ satisfies the semi-discrete equation, we have
	\begin{equation}
	\begin{aligned}
	\nLtwo{\nabla \pttn}^2=&\, \prodLtwo{\nabla \pttn}{\nabla \pttn} 
	=-\prodLtwo{\pttn}{\Delta \pttn}\\
	=&\, -\prodLtwo{\alpha^{-1}(b \Delta p_t^n +c^2\Delta p^n + k q_t p_t^n)}{\Delta \pttn}.
	\end{aligned}
	\end{equation}
	Therefore, 
	\begin{equation} \label{LinWest:id_ptnn}
	\begin{aligned}
	\nLtwo{\nabla \pttn}^2=&\, \begin{multlined}[t]\prodLtwo{\alpha^{-1}(b \nabla \Delta p_t^n +c^2 \nabla \Delta p^n + k \nabla [q_t p_t^n])}{\nabla \pttn} \\[1mm]
	+\prodLtwo{\nabla [\alpha^{-1}](b \Delta p_t^n +c^2 \Delta p^n + k q_t p_t^n)}{\nabla \pttn}. \end{multlined}
	\end{aligned}
	\end{equation}
	The above identity together with the Cauchy--Schwarz inequality allow us to conclude that
	\begin{equation}\label{LinWest:bound_ptnn}
	\begin{aligned}
	&\nLtwo{\nabla \pttn}\\
	\leq&\, \begin{multlined}[t]\ulal^{-1} (b\nLtwo{\nabla \Delta p_t^n} +c^2 \nLtwo{\nabla \Delta p^n} +| k| \nLtwo{\nabla q_t}\CHtwo \nLtwo{\D p_t^n}\\
	+ |k|\CHone^2 \nLtwo{\nabla q_t} \nLtwo{\Delta \ptn})\\
	+\ulal^{-2}|k|\CHtwo\nLtwo{\nabla \Delta q} (b \nLtwo{\Delta p_t^n} +c^2 \nLtwo{\Delta p^n}\\ + |k |\nLtwo{q_t}\CHtwo \nLtwo{\D p_t^n}).
	\end{multlined}
	\end{aligned}
	\end{equation}
	Applying Gronwall's inequality to \eqref{enest} and taking the supremum over $t \in (0,T_n)$ yield
	\begin{equation} 
	\begin{aligned}
	& \begin{multlined}[t] \sup_{t \in (0,T_n)} \nLtwo{\D \ptn(t)}^2\,+\sup_{t \in (0,T_n)} \nLtwo{\nabla \D \pn(t)}^2 
	%\\
	+b %BK 2020-10-17 (1-b |k|\nLinfLthree{\D q}^2-b |k| \nLinfLinf{\nabla q}^2)
	\int_0^{T_n} \nLtwo{\nabla \D \ptn}^2\dt
	\end{multlined}\\
	\leq&\, \begin{multlined}[t] C(T)(\nLtwo{\nabla \D p_0}^2+\nLtwo{\D p_1}^2),
	\end{multlined}
	\end{aligned}
	\end{equation}
	where we have additionally bounded the approximate initial data by the exact data in the same norm. The constant above is given by
	\begin{equation}\label{CofT}
	\begin{aligned}
	C(T)= C_1\exp(C_2\|k|(\|q\|_{X^W_\alpha}+(1+b^2)\|q\|_{X^W_\alpha}^2+\|q\|_{X^W_\alpha}^3) T),
	\end{aligned}
	\end{equation}
	where $C_1$, $C_2>0$ do not depend neither on $b$ nor on 
	{$T_n$,}  
	$n$. Thanks to \eqref{LinWest:bound_ptnn}, we then also have a uniform bound on $\pttn$:
	\begin{equation}\label{boundnablaptt}
	\begin{aligned}
	\int_0^{T_n} \nLtwo{\nabla \pttn}^2 \dt
	\lesssim&\, \begin{multlined}[t]b^2\nLtwoLtwo{\nabla \Delta p_t^n} + \nLtwoLtwo{\nabla \Delta p^n}^2 
	\\+ (1+\nLinfLtwo{\nabla q_t}^2) \nLtwoLtwo{\Delta \ptn}^2\\
	+\nLinfLtwo{\nabla \Delta q}^2 \left(b^2 \nLtwoLtwo{\Delta p_t^n}^2 + \nLtwoLtwo{\Delta p^n}^2 \right.\\ \left.+ \nLinfLtwo{q_t}^2 \nLtwoLtwo{\D p_t^n}^2 \right).
	\end{multlined}
	\end{aligned}
	\end{equation}
	If $b=0$, we even get a bound on $\sup_{(0,T_n)} \nLtwo{\nabla \pttn}$ here.
	
	\indent The uniform bounds with respect to $T_n$ allow us to extend the existence interval to $[0,T]$. \\ 
	\paragraph{\bf Passing to the limit} Thanks to the uniform bounds on $\pn$  and the fact that our spatial and temporal domains are bounded, we can employ standard compactness arguments and conclude that there exist a subsequence, which we do not relabel, and a function $p$ such that
	\begin{equation} \label{weak_limits}
	\begin{alignedat}{4} 
	p_{tt}^n  &\relbar\joinrel\rightharpoonup p_{tt} &&\text{ weakly}  &&\text{ in } &&L^2(0,T;H_0^1(\Omega)),  \\
	p_t^n &\relbar\joinrel\rightharpoonup p_t &&\text{ weakly-$\star$} &&\text{ in } &&L^\infty(0,T; H_0^1(\Omega)\cap H^2(\Omega)),\\
	p_t^n &\relbar\joinrel\rightharpoonup p_t &&\text{ weakly} &&\text{ in } &&L^2(0,T; \Honethree)
	\ \mbox{ if }b>0,\\
	p^n &\relbar\joinrel\rightharpoonup p &&\text{ weakly-$\star$} &&\text{ in } &&L^\infty(0,T; \Honethree).
	\end{alignedat} 
	\end{equation}
	We next wish to prove that this limit solves \eqref{ibvp_linWest}. We multiply the semi-discrete equation by $\eta \in C_0^\infty(0,T)$ and integrate over time to obtain 
	\begin{equation} \label{1}
	\begin{aligned}
	\int_{0}^T(p^n_{tt}, w_i)_{L^2}\, \eta(t)\, \textup{d}t
	=\, \int_{0}^T(c^2\Delta \pn+b\Delta \ptn +kq_t \ptn, \ooal w_i)_{L^2} \eta(t) \, \textup{d}t
	\end{aligned}
	\end{equation}
	for $i \in [1, n]$. Letting $n \rightarrow \infty$ in the above equation and employing \eqref{weak_limits} leads to
	\begin{equation}
	\begin{aligned}
	\int_{0}^T(p_{tt}, w_i)_{L^2}\, \eta(t)\, \textup{d}t
	=\, \int_{0}^T(c^2\Delta p+b\Delta \pt +k q_t \pt, \ooal w_i)_{L^2} \eta(t) \, \textup{d}t,
	\end{aligned}
	\end{equation}
	for all  $i \in \mathbb{N}$ and $\eta \in C^\infty_0(0,T)$. Since $\displaystyle \bigcup_{n \in \mathbb{N}}V_n$ is dense in $L^2(\Om)$, and by the Fundamental Lemma of Calculus of Variations, $p$ solves the original problem. {For $b \neq 0$,} due to the embedding
	\begin{equation}
	\begin{aligned}
	&	p^n \in H^{1}(0,T; \Honethree) \hookrightarrow C([0,T]; \Honethree),
	\end{aligned}
	\end{equation}
	we know that $p^n(0) \rightharpoonup p(0) \quad \text{in } \Honethree$. On the other hand, $\pn(0) \rightarrow p_0$ in $\Honethree$, and so $p(0)=p_0$. We also have 
	\begin{equation}
	\begin{aligned}
	p^n_t \in&\, \begin{multlined}[t] L^\infty(0,T; \Honetwo) \cap H^1(0,T; H_0^1(\Om)) \\ \hookrightarrow C_w([0,T]; \Honetwo) \cap C([0,T]; H_0^1(\Om)), 
	\end{multlined}
	\end{aligned}
	\end{equation}
	and, therefore, also $p_t(0)=p_1$.\\
	\paragraph{\bf Energy estimate for \mathversion{bold}$p$} We can take the limit inferior as $n \rightarrow \infty$ of the discrete energy bound, and by virtue of the weak and the weak-$\star$ lower semi-continuity of norms obtain the final estimate. Uniqueness of a solution in $\spaceW$ follows by {testing the homogeneous problem (i.e., with $p_0=p_1=0)$ with $p_t$ and relying on the bound
		\begin{equation} \label{lin_lower_bound}
		\begin{aligned}
		&\frac12 \|\sqrt{\alpha(t)}p_t(t)\|^2_{L^2}+c^2\|\nabla p(t)\|^2_{L^2} +b \int_0^t\|\nabla p_t\|^2\dxs\\
		=&\frac12 \int_0^t(\alpha_t p_t, p_t)_{L^2}\ds+k\int_0^t (q_t p_t, p_t)\ds\\
		\lesssim &\, (\|\alpha_t\|_{L^\infty(L^\infty)}+\|q_t\|_{L^\infty(L^\infty)})\|p_t\|^2_{L^2(L^2)},
		\end{aligned}
		\end{equation}
		for $t \in [0,T]$, together with Gronwall's inequality.}\\ %the linearity of the problem and the derived energy bound.\\
	\paragraph{\bf Additional regularity}
	The already established existence of the solution $p\in\spaceW$, via the PDE implies \[\ptt=\ooal(b\D\pt+c^2\D p+kq_t\pt)\in H^1(0,T;H^{-1}(\Omega))\]
	and, therefore, $p\in H^3(0,T;H^{-1}(\Omega))\cap \spaceW$. 
	Since by interpolation (with interpolation parameter $\theta=\frac{1-m}{2-m}\in[0,\frac12)$, where $m=\min\{s,1\}>0$ and $r=\frac{m}{2(2-m)}>0$) we have 
	\[
	\begin{aligned}
	H^3(0,T;H^s(\Omega))\cap H^2(0,T;\Honetwo)&\hookrightarrow H^{2+(1-\theta)}(0,T;H^{m+\theta(2-m)}(\Omega))
	\\&=H^{5/2+r}(0,T;\Hone)\hookrightarrow C^2([0,T];\Hone),
	\end{aligned}
	\] 
	the imposed initial conditions on $p$, $p_t$, and $p_{tt}$ make sense in the spaces $\Hthree$, $\Hthree$, and $\Hone$, respectively.
	
	We now derive a uniform bound on $\nLinfLtwo{\nabla \ptt}$ also in the case $b>0$ for sufficiently regular initial data. Note that we have already established such a bound when $b=0$, right after estimate \eqref{boundnablaptt}. \\
	\indent To justify the derivation of this bound, {we will employ a Galerkin discretization of the time-differentiated version of \eqref{ibvp_linWest}:
		\begin{equation}\label{Galerkintimediff}
		\begin{aligned} 
		\begin{cases}
		(\alpha \tilde{p}^n_{ttt} +\alpha_t \tilde{p}^n_{tt}- b \D \tilde{p}^n_{tt} - c^2\D \tilde{p}^n_t - k q_t \tilde{p}^n_{tt} - {k q_{tt}\tilde{p}^n_t},  \phi)_{L^2} = 0, 
		\\[1mm]
		\text{for every $\phi \in V_n$ pointwise a.e. in $(0,T)$}, \\[1mm]
		(\tilde{p}^n(0), \tilde{p}^n_t(0), \tilde{p}^n_{tt}(0))=(\tilde{p}^n_0, \tilde{p}^n_1, \tilde{p}^n_2),
		\end{cases}
		\end{aligned}
		\end{equation}
		where $\tilde{p}^n_0$ , $\tilde{p}^n_1$, $\tilde{p}^n_2$ are the Galerkin projections of $p_0$, $p_1$, and $\alpha(0)^{-1} (b \D p_1 +c^2\D p_0 +k q_t(0)p_1)$ (for the latter see the compatibility condition \eqref{compat_West}).} To relate the solution of the time-differentiated equation to the original one, the compatibility condition \eqref{compat_West} is needed. \\
	Testing 
	\eqref{Galerkintimediff}
	with $\tilde{p}^n_{ttt}-\Delta \tilde{p}^n_{tt}$ results in
	\begin{equation} 
	\begin{aligned} 
	(\alpha \tilde{p}^n_{ttt}+\alpha_t \tilde{p}^n_{tt} - b \Delta \tilde{p}^n_{tt} -c^2\Delta \tilde{p}^n_t - k q_t \tilde{p}^n_{tt}-k q_{tt}\tilde{p}^n_t,  \tilde{p}^n_{ttt}-\Delta \tilde{p}^n_{tt})_{L^2} = 0.
	\end{aligned}
	\end{equation}
	We can then rely on the identity
	\begin{equation}
	\addtolength{\jot}{0.5mm}
	\begin{aligned}
	\prodLtwo{\alpha \tilde{p}^n_{ttt}}{\tilde{p}^n_{ttt}-\Delta \tilde{p}^n_{tt}} 
	=&\,\prodLtwo{\alpha \tilde{p}^n_{ttt}}{\tilde{p}^n_{ttt}}+\prodLtwo{\nabla[\alpha \tilde{p}^n_{ttt}]}{\nabla \tilde{p}^n_{tt}} \\
	=&\,\prodLtwo{\alpha \tilde{p}^n_{ttt}}{\tilde{p}^n_{ttt}}+ \prodLtwo{\alpha \nabla \tilde{p}^n_{ttt}+  \tilde{p}^n_{ttt}\nabla \alpha}{\nabla \tilde{p}^n_{tt}}\\
	=&\,\begin{multlined}[t]\prodLtwo{\alpha \tilde{p}^n_{ttt}}{\tilde{p}^n_{ttt}}+\frac12 \ddt (\alpha \nabla \tilde{p}^n_{tt}, \nabla \tilde{p}^n_{tt})\\ -\frac12 (\alpha_t \nabla \tilde{p}^n_{tt},\nabla \tilde{p}^n_{tt})
	+( \tilde{p}^n_{ttt} \nabla \alpha,\nabla \tilde{p}^n_{tt}),\end{multlined}
	\end{aligned}
	\end{equation}
	and, similarly, the identity
	\begin{equation}
	\addtolength{\jot}{0.5mm}
	\begin{aligned}
	\prodLtwo{\alpha_t \tilde{p}^n_{tt}}{\tilde{p}^n_{ttt}-\Delta \tilde{p}^n_{tt}} 
	=&\,\prodLtwo{\alpha_t \tilde{p}^n_{tt}}{\tilde{p}^n_{ttt}}+\prodLtwo{\nabla[\alpha_t \tilde{p}^n_{tt}]}{\nabla \tilde{p}^n_{tt}} \\
	=&\,\begin{multlined}[t]\prodLtwo{\alpha_t \tilde{p}^n_{tt}}{\tilde{p}^n_{ttt}}+\prodLtwo{\alpha_t \nabla \tilde{p}^n_{tt}}{\nabla \tilde{p}^n_{tt}}+\prodLtwo{\tilde{p}^n_{tt} \nabla \alpha_t}{\nabla \tilde{p}^n_{tt}}.\end{multlined}
	\end{aligned}
	\end{equation}
	In this manner, we obtain 
	\begin{equation} \label{id_additionalreg}
	\begin{aligned}
	&\begin{multlined}[t]\nLtwo{\sqrt{\alpha}  \tilde{p}^n_{ttt}}^2+\frac12 \ddt \nLtwo{\sqrt{\alpha} \nabla \tilde{p}^n_{tt}}^2\\+ \frac12 c^2 \ddt\nLtwo{\D \tilde{p}^n_t}^2+b \nLtwo{\D \tilde{p}^n_{tt}}^2
	+ \frac12 b \ddt\nLtwo{\nabla \tilde{p}^n_{tt}}^2\end{multlined}\\
	=&\, \begin{multlined}[t]\frac12 \prodLtwo{ \alpha_t \nabla \tilde{p}^n_{tt}}{\nabla \tilde{p}^n_{tt}}
	-\prodLtwo{\tilde{p}^n_{ttt} \nabla \alpha}{\nabla \tilde{p}^n_{tt}}-\prodLtwo{\alpha_t \tilde{p}^n_{tt}}{\tilde{p}^n_{ttt}}\\-\prodLtwo{\tilde{p}^n_{tt} \nabla \alpha_t}{\nabla \tilde{p}^n_{tt}}
	+ k\prodLtwo{q_t \tilde{p}^n_{tt}+ q_{tt}\tilde{p}^n_t}{\tilde{p}^n_{ttt}-\Delta \tilde{p}^n_{tt}} 
	+c^2\prodLtwo{\D \tilde{p}^n_t}{\tilde{p}^n_{ttt}} =: \textup{rhs}. \end{multlined}
	\end{aligned}
	\end{equation}
	Furthermore, we have
	\begin{equation}
	\begin{aligned}
	\prodLtwo{q_t \tilde{p}^n_{tt}+ q_{tt}\tilde{p}^n_t}{\tilde{p}^n_{ttt}-\Delta \tilde{p}^n_{tt}}=&\, \prodLtwo{q_t \tilde{p}^n_{tt}+ q_{tt}\tilde{p}^n_t}{\tilde{p}^n_{ttt}}+\prodLtwo{\nabla (q_t \tilde{p}^n_{tt}+ q_{tt}\tilde{p}^n_t)}{\nabla\tilde{p}^n_{tt}}. \\
	\end{aligned}
	\end{equation}
	Thus, we can estimate the right-hand side of \eqref{id_additionalreg} as follows:
	\begin{equation}
	\begin{aligned}
	|\textup{rhs}| \lesssim&\, \begin{multlined}[t] (1+\nLinf{\alpha_t})\nLtwo{\nabla \tilde{p}^n_{tt}}^2+\varepsilon \nLtwo{\tilde{p}^n_{ttt}}^2+(1+\nLinf{\nabla \alpha}^2+\nLinf{q_t})\nLtwo{\nabla \tilde{p}^n_{tt}}^2\\ 
	+ (\nLthree{\alpha_t}^2+\nLthree{\nabla \alpha_t}^2+\nLthree{q_t}^2+\nLthree{\nabla q_t}^2)\nLsix{\tilde{p}^n_{tt}}^2 \\
	+\nLthree{q_{tt}}^2\nLsix{\nabla\tilde{p}^n_t}^2+\nLthree{q_{tt}}^2\nLsix{\tilde{p}^n_t}^2+\nLtwo{\nabla q_{tt}}^2\nLinf{\tilde{p}^n_t}^2
	+\nLtwo{\D \tilde{p}^n_t}^2 \end{multlined}
	\end{aligned}
	\end{equation}
	for all $t \in [0,T]$, with $\varepsilon$ small enough, but independent of $b$. From the compatibility condition \eqref{compat_West} and boundedness of the projection operator we can obtain a bound on $\nabla \tilde{p}^n_{tt}(0)$ as follows:  
	\begin{equation}
	\begin{aligned}
	\nLtwo{\nabla \tilde{p}^n_{tt}(0)} \lesssim&\, \begin{multlined}[t] b\nLtwo{\nabla \Delta p_1} + \nLtwo{\nabla \Delta p_0} + \nLtwo{\nabla q_t(0)} \nLtwo{\D p_1}
	+  \nLtwo{\nabla q_t(0)} \nLtwo{\Delta p_1}\\
	+\nLtwo{\nabla \Delta q(0)} (b \nLtwo{\Delta p_1} +\nLtwo{\Delta p_0}+ \nLtwo{q_t(0)} \nLtwo{\D p_1}).
	\end{multlined}
	\end{aligned}
	\end{equation}
	{Applying Gronwall's inequality thus yields the energy estimate
		\begin{equation} \label{enest_additionalreg}
		\begin{aligned}
		&\begin{multlined}[t]\nLtwo{\sqrt{\alpha}  \tilde{p}^n_{ttt}}^2+\frac12 \ddt \nLtwo{\sqrt{\alpha} \nabla \tilde{p}^n_{tt}}^2\\+ \frac12 c^2 \ddt\nLtwo{\D \tilde{p}^n_t}^2+b \nLtwo{\D \tilde{p}^n_{tt}}^2
		+ \frac12 b \ddt\nLtwo{\nabla \tilde{p}^n_{tt}}^2\end{multlined}\\
		\lesssim& \begin{multlined}[t] \nLtwo{\nabla \D p_0}^2+\nLtwo{\D p_1}^2+b \nLtwo{\nabla \D p_1}^2,
		\end{multlined}
		\end{aligned}
		\end{equation}
		where we have used that also \eqref{q0p0} holds.\\
		\indent Due to the uniform bound \eqref{enest_additionalreg}, $\tilde{p}^n$ has a weakly convergent subsequence whose limit $\tilde{p}$ inherits this bound. Moreover, $\tilde{p}$ solves the time-differentiated PDE and satisfies the compatibility condition \eqref{compat_West}. By integration with respect to time, it therefore satisfies the original initial-value problem \eqref{ibvp_linWest} and thus by uniqueness has to coincide with $p$.
	}
	
	Combined with the previous uniform estimate for smooth approximations, this allows us to obtain the higher-order in time bound
	\eqref{LinWest:Main_energy_est_higher}. 
\end{proof}

%%%%%%%%%%%%%%%%%%%%%%%%%%%%%%%%%%%%%%%%%%%%
\section{Uniform bounds for the Westervelt equation} \label{Sec:West}
We next analyze the Westervelt equation by introducing the fixed-point mapping
\begin{equation}
\mathcal{T}:q\mapsto p,
\end{equation}
where $q$ will belong to a suitably chosen ball in the space \[ \spaceW \cap \LinfLinf, \]
with $\spaceW$ defined in \eqref{regularity}. The function $p$ will solve the linearized Westervelt equation:
\begin{equation} \label{linWest_fp}
\alpha \ptt - b \D p_t - c^2\D p - k q_t p_t =0,
\end{equation}
with $\alpha=1-kq$ and initial conditions $p(0)=q(0)=p_0$, $p_t(0)=q_t(0)=p_1$.\\
\indent We first determine the conditions under which $\mathcal{T}$ is a self-mapping on an appropriately chosen set $M$. More precisely, we define $M$ as a subset of the solution space $\spaceW$,
	\begin{equation}\label{defM}
	\begin{aligned}
	M = \left\{\vphantom{\frac{1}{2|k|}}	q\in \spaceW:\right.&\,   q(0)=p_0, \ q_t(0)=p_1, \
   \nLinfLinf{q}\leq \frac{1}{2|k|} \\
	&  \hspace*{-1.3cm}\left. \nLtwoLtwo{\nabla q_{tt}}^2+\nLinfLtwo{\D  q_t}^2 + \nLinfLtwo{\nabla\D  q}^2
+b\nLtwoLtwo{\nabla\D q_t}^2\leq R^2 \vphantom{\frac{1}{2|k|}} \right\}. 
	\end{aligned}
	\end{equation}
The imposed conditions allow us to uniformly bound the hidden constant in estimate \eqref{LinWest:Main_energy_est} by 
\begin{equation}\label{Clin_West}
C_{\textup{lin}}(T,R)= C_1 \exp{\left(C_2|k|(R+R^2+R^3)T\right)}, 
\end{equation}  
where the positive constants $C_1$ and $C_2$ do not depend on $b \in [0, \bar{b})$ for fixed $\bar{b}$ nor on $R$, $T$; cf. \eqref{CofT}.
\begin{proposition} \label{Prop:SelfMapping}
Let $\Omega \subset \R^n$, where $n \in \{1, 2, 3\}$, be bounded and $C^3$ regular. Furthermore, let $b \in [0, \bar{b})$ and let $T>0$ be a given final time. For initial data satisfying the regularity condition \eqref{IC_Westervelt}, the mapping $\mathcal{T}:q\mapsto p$  is a self-mapping on the set $M$ defined in \eqref{defM}, provided the $H_0^1(\Omega) \times L^2(\Omega)$ norm of the initial data is small enough so that
\begin{equation}\label{smallnessinit}
{C_{\textup{A}}\tilde{C}_1 \exp(\tilde{C}_2RT)(\|p_1\|^2_{L^2}+\|\nabla p_0\|^2_{L^2})^{1/4}R^{1/2} \leq \frac{1}{2|k|}}
\end{equation}
and the radius $R$ and final time $T$ are chosen such that
\begin{equation}\label{smallnessT}
C_1 \exp{\left(C_2|k|(R+R^2+R^3)T\right)}(\nLtwo{\nabla \D p_0}^2+\nLtwo{\D p_1}^2)\leq R^2.
\end{equation}
Furthermore, $p=\mathcal{T}(q)$ satisfies the energy estimate
\begin{equation}\label{estpnonlin}
\begin{aligned}
& \nLtwoLtwo{\nabla \ptt}^2+\esssup_{t \in (0,T)} \nLtwo{\D \pt (t)}^2\,+\esssup_{t \in (0,T)} \nLtwo{\nabla \D p(t)}^2 
+b\nLtwoLtwo{\nabla \D \pt}^2
\\
\le&\, C_{\textup{lin}}(T,R) (\nLtwo{\nabla \D p_0}^2+\nLtwo{\D p_1}^2).
\end{aligned}
\end{equation}
{
Here the constants $C_{\textup{A}}$, $\tilde{C}_1$, $\tilde{C}_2$, $C_1$, $C_2$, and $C_{\textup{lin}}(T,R)$ are as in \eqref{Clin_West} and \eqref{CAC1tilC2til} below.
}
\end{proposition}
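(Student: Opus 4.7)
The goal is to show that for $p=\mathcal{T}(q)$ with $q\in M$, all three defining properties of $M$ are inherited by $p$: the initial conditions, the radius estimate, and the pointwise smallness $\|p\|_{L^\infty(L^\infty)}\le\frac{1}{2|k|}$. The initial conditions $p(0)=p_0$, $p_t(0)=p_1$ are built into the definition of the mapping, so the real content lies in the other two. First I would verify that the linearized problem \eqref{linWest_fp} with coefficient $\alpha=1-kq$ satisfies the hypotheses of Proposition~\ref{Prop:LinWest}: the constraint $\|q\|_{L^\infty(L^\infty)}\le\frac{1}{2|k|}$ immediately yields the non-degeneracy \eqref{non-degeneracy_assumption} with, say, $\ulal=\tfrac12$ and $\olal=\tfrac32$, while the radius condition in $M$ gives $\|q\|_{L^\infty(H^3)}+\|q_t\|_{L^\infty(H^2)}\lesssim R$, and hence $\|\alpha\|_{\spaceW_\alpha}\lesssim 1+|k|R$, so the regularity \eqref{eq:alphagammaf_reg_Wes} holds. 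Proposition~\ref{Prop:LinWest} then produces a unique $p\in\spaceW$ satisfying \eqref{LinWest:Main_energy_est}.

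Tracking the constant $C(T)$ in \eqref{CofT}, which is increasing in $\|q\|_{\spaceW_\alpha}$, and invoking the above $R$-bound turns it into $C_{\textup{lin}}(T,R)$ as in \eqref{Clin_West}, giving \eqref{estpnonlin} directly. The hypothesis \eqref{smallnessT} then ensures that the left-hand side of the radius condition for $p$ does not exceed $R^2$, so $p$ lies in the energy ball of radius $R$. The more delicate step is the pointwise bound, since the higher-order estimate \eqref{estpnonlin} only yields $\|p\|_{L^\infty(H^3)}\lesssim R$, which is not small. I would therefore derive a second, low-order, energy estimate by testing \eqref{linWest_fp} with $p_t$ in the spirit of \eqref{lin_lower_bound} (keeping the initial-data contributions) and applying Gronwall's inequality to obtain
\begin{equation*}
\|\nabla p\|_{L^\infty(L^2)}^2+\|p_t\|_{L^\infty(L^2)}^2 \le \tilde{C}_1\exp(\tilde{C}_2 R T)\left(\|\nabla p_0\|^2_{L^2}+\|p_1\|^2_{L^2}\right),
\end{equation*}
with $\tilde{C}_1$, $\tilde{C}_2$ depending only on $R$ and the medium parameters (not on $b$). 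In space dimension $n\le 3$, $H^2\hookrightarrow L^\infty$ combined with the interpolation $\|u\|_{H^2}\lesssim\|u\|_{H^1}^{1/2}\|u\|_{H^3}^{1/2}$ (valid on the relevant subspace by standard complex interpolation) yields
\begin{equation*}
\|p\|_{L^\infty(L^\infty)}\le C_{\textup{A}}\,\|p\|_{L^\infty(H^1)}^{1/2}\,\|p\|_{L^\infty(H^3)}^{1/2}.
\end{equation*}
Substituting the low-order bound and $\|p\|_{L^\infty(H^3)}\lesssim R$ reproduces exactly the expression on the left of \eqref{smallnessinit}, and the smallness hypothesis closes the argument.

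The main obstacle is reconciling two competing demands: the radius $R$ is set by the higher-order estimate, in which no smallness of initial data is assumed, while the pointwise constraint $\frac{1}{2|k|}$ must be enforced uniformly without constraining the top-level norms of $p_0,p_1$. The resolution is precisely the splitting above: radius at top regularity, pointwise smallness at low regularity (where the initial data are genuinely small), with the two scales bridged by the Gagliardo--Nirenberg-type interpolation. The bookkeeping then reduces to confirming that the constants $C_{\textup{A}}$, $\tilde{C}_1$, $\tilde{C}_2$, $C_1$, $C_2$ can be chosen independently of $b\in[0,\bar{b})$; this is guaranteed because Proposition~\ref{Prop:LinWest} already provides $b$-independent energy estimates, and all the auxiliary controls on $\alpha$, $\alpha_t$, and $q_t$ are in $L^\infty$-, $H^2$-, and $H^3$-type norms, none of which interact with the viscosity parameter.
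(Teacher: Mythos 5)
Your proposal is correct and follows essentially the same route as the paper: verify non-degeneracy and the $\spaceW_\alpha$-regularity of $\alpha=1-kq$ from the constraints defining $M$, invoke Proposition~\ref{Prop:LinWest} together with \eqref{smallnessT} for the radius bound, and obtain the pointwise $\frac{1}{2|k|}$ bound from a separate low-order energy estimate (testing with $p_t$ plus Gronwall) bridged to the large top-order norm by interpolation. The only cosmetic difference is that you interpolate $\|p\|_{L^\infty}\lesssim\|p\|_{H^1}^{1/2}\|p\|_{H^3}^{1/2}$ while the paper uses Agmon's inequality $\|p\|_{L^\infty}\leq C_{\textup{A}}\|p\|_{H^1}^{1/2}\|p\|_{H^2}^{1/2}$ with $\|p\|_{L^\infty(H^2)}\lesssim R$; both yield exactly the left-hand side of \eqref{smallnessinit}.
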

\begin{remark}
To satisfy condition \eqref{smallnessinit}, only smallness of the initial data in a weaker norm is needed; namely {$H_0^1(\Omega) \times L^2(\Omega)$} rather than the $\Honethree\times\Honetwo$ norm required for the higher-order energy estimates. 
Condition \eqref{smallnessT} can be satisfied even for initial data with large {higher-order} norm, by, for example, choosing 
\[R^2= 2 C_1 (\nLtwo{\nabla \D p_0}^2+\nLtwo{\D p_1}^2)\] and assuming the final time to be short enough, so that \[T\leq\frac{\ln(2)}{C_2|k|(R+R^2+R^3)}.\]
Note that both smallness conditions \eqref{smallnessinit} and \eqref{smallnessT} are additionally mitigated by the fact that the nonlinearity parameter $k$ is typically small in magnitude in ultrasonic applications; cf.~\cite[\S 5]{kaltenbacher2007numerical}.
\end{remark}
\begin{proof}
We rely in the proof on our previous linear analysis, which, in particular, implies $M\not=\emptyset$ and that $\mathcal{T}$ is well-defined on $M$. Let $q \in M$. The bound \[\nLinfLinf{q}\leq \frac{1}{2|k|}\] immediately implies that
the non-degeneracy assumption \eqref{non-degeneracy_assumption} holds with $\ulal=\frac12$ and $\olal=\frac32$. 
Moreover, the regularity assumptions of Proposition~\ref{Prop:LinWest} are satisfied with the uniform bounds
\begin{equation}
\begin{aligned}
&\nLinfHthree{\alpha}\leq |\Omega|^{1/2}+|k| C_{(-\Delta)^{-1}} R, \qquad 
\nLinfHtwo{\alpha_t}\leq |k| C_{(-\Delta)^{-1}} R, \\
&\nLinfHtwo{q_t}\leq C_{(-\Delta)^{-1}} R\,.
\end{aligned}
\end{equation}
Hence Proposition~\ref{Prop:LinWest} applied to $p=\mathcal{T}(q)$, together with the imposed smallness condition
\eqref{smallnessT} imply that $\mathcal{T}$ is well-defined and yield the $R^2$ bound in \eqref{defM} on $\mathcal{T}q=p$. \\
\indent To also achieve the $\frac{1}{2|k|}$ bound on the $L^\infty(0,T; L^\infty(\Om))$ norm of $p$, we derive a simple energy estimate for the linearization.
{Similarly to \eqref{lin_lower_bound}, by multiplying equation \eqref{linWest_fp} with $p_t$ and integrating over space and time, we arrive at	
	\begin{equation}
	\begin{aligned}
&\frac12 \left\{\|\sqrt{\alpha(t)}p_t(t)\|^2_{L^2}+c^2\|\nabla p(t)\|^2_{L^2} \right\}\Big \vert_0^t+b \int_0^t\|\nabla p_t\|^2\dxs\\
%	=&\frac12 \int_0^t(\alpha_t p_t, p_t)_{L^2}\ds+k\int_0^t (q_t p_t, p_t)\ds\\
	\lesssim &\, (\|\alpha_t\|_{L^\infty(L^\infty)}+\|q_t\|_{L^\infty(L^\infty)})\|p_t\|^2_{L^2(L^2)}.   
	\end{aligned}
	\end{equation}
From here by Gronwall's inequality, we have
	\begin{equation}
\begin{aligned}
&\|p_t(t)\|^2_{L^2}+\|\nabla p(t)\|^2_{L^2} 
\leq \, C_1 \exp(C_2RT)(\|p_1\|^2_{L^2}+\|\nabla p_0\|^2_{L^2})  
\end{aligned}
\end{equation}
for all $t \in [0,T]$.  Following, e.g.,~\cite[Theorem 1.4]{bongarti2021vanishing}, the desired $L^\infty$ bound on $p$ can be obtained by employing the above estimate and Agmon's interpolation inequality~\cite[Lemma 4.10]{constantin1988navier}:
\begin{equation} \label{Agmon}
\|p(t)\|_{L^\infty} \leq C_{\textup{A}} \|p(t)\|_{H^1}^{1/2}\|p(t)\|_{H^2}^{1/2}.
\end{equation}
Indeed, by also using that $\|p\|_{L^\infty(H^2)} \lesssim R$, we have 
\begin{equation}\label{CAC1tilC2til}
\begin{aligned}
\|p\|_{L^\infty(L^\infty)}\leq C_{\textup{A}}\tilde{C}_1 \exp(\tilde{C}_2RT)(\|p_1\|^2_{L^2}+\|\nabla p_0\|^2_{L^2})^{1/4}R^{1/2}.
\end{aligned}
\end{equation}
for some positive constants $\tilde{C}_1$ and $\tilde{C}_2$, independent of $b$. Thus for $\mathcal{T}(M) \subset M$ to hold, we need to impose the following condition:
\[
C_{\textup{A}}\tilde{C}_1 \exp(\tilde{C}_2RT)(\|p_1\|^2_{L^2}+\|\nabla p_0\|^2_{L^2})^{1/4}R^{1/2} \leq \frac{1}{2|k|},
\] 
as claimed.}
 \begin{comment}By multiplying equation \eqref{linWest_fp} with $-\Delta p_t$, integrating (by parts) with respect to space, and using the identities
\[
\begin{aligned}
\alpha\nabla p_{tt}\cdot\nabla p_t =&\, \ddt \frac12 \alpha |\nabla p_t|^2 - \frac12 \alpha_t |\nabla p_t|^2, \\
\prodLtwo{q_t p_t}{-\Delta p_t} =&\, \prodLtwo{p_t \nabla q_t}{\nabla p_t} + \prodLtwo{q_t \nabla p_t}{\nabla p_t},
\end{aligned}
\]
we obtain the energy identity
\[
\begin{aligned}
&\frac12\ddt\nLtwo{\sqrt{\alpha}\nabla p_t}^2 + b \nLtwo{\D p_t}^2 + c^2 \frac12\ddt \nLtwo{\D p}^2 \\
=&\, k \left(\frac12\prodLtwo{q_t}{|\nabla p_t|^2} + \prodLtwo{\nabla q_t}{p_t \nabla p_t} \right).
\end{aligned}
\]
After integration with respect to time and multiplication by two, we have
\[
\begin{aligned}
&\frac12\nLtwo{\nabla p_t(t)}^2 + 2b \int_0^t\nLtwo{\D p_t}^2\,\textup{d}\tau + c^2 \nLtwo{\D p(t)}^2 \\
\leq&\, \frac32\nLtwo{\nabla p_1}^2 + c^2 \nLtwo{\D p_0}^2
+|k|\left((\CHtwo+2\CHone^2)\nLinfLtwo{\Delta q_t} \int_0^t \nLtwo{\nabla p_t}^2\,\textup{d}\tau \right)\,.
\end{aligned}
\]
Gronwall's inequality and the given bound $\nLinfLtwo{\Delta q_t}\leq R$ yield
\[
\begin{aligned}
&\frac12\nLtwo{\nabla p_t(t)}^2  + c^2 \nLtwo{\D p(t)}^2\\
\leq&\, \exp{\left(|k|(\CHtwo+2\CHone^2) R \,T\right)} \left(\frac32\nLtwo{\nabla p_1}^2 + c^2 \nLtwo{\D p_0}^2\right).
\end{aligned}
\]
Thus the desired $L^\infty$ bound on $p$ follows by taking into account condition \eqref{smallnessinit} and continuity of the embedding $H^2(\Omega) \hookrightarrow L^\infty(\Omega)$.
\end{comment}
\end{proof}
\noindent We next prove that $\mathcal{T}$ is a contraction in a suitably chosen topology. 
\begin{proposition}\label{Prop:Contraction}
Let the assumptions on $\Omega$ from Proposition \ref{Prop:SelfMapping} be satisfied.
There exist constants $C_5$, $C_6>0$ depending only on the domain $\Omega$ and the constant $c^2$, but neither on $T$ nor on $b$, such that if either  
\begin{equation}\label{small1}
C_5 \exp{(C_6 T)} k^2 \, (T+1)\, (\|p_0\|^2_{H^3}+\|p_1\|^2_{H^2}) < 1
\end{equation}
or $p_1 \in \Honethree$, \eqref{compat_West}, $q$, $\alpha \in \spaceW$ with \eqref{q0p0}, and
\begin{equation}\label{small2}
C_5 \exp{(C_6 T)} k^2 \, T\, (\|p_0\|^2_{H^3}+\|p_1\|^2_{H^3}) < 1,
\end{equation}
then the mapping $\mathcal{T}$ is a contraction on $M$ in  the topology induced by
	\begin{equation} \label{X_norm}
	\|p\|^2_{X}= \nLtwoLtwo{\ptt}^2+\nLinfLtwo{\nabla \pt}^2 + \nLinfLtwo{\Delta p}^2.
	\end{equation}	
\end{proposition}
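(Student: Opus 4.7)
Given $q_1,q_2\in M$, set $p_i=\mathcal{T}(q_i)$, $\bar p=p_1-p_2$, $\bar q=q_1-q_2$, and $\alpha_i=1-k q_i$. Subtracting the two linearizations and collecting coefficient differences yields
\begin{equation}\label{diffeq_contr}
\alpha_1\bar p_{tt}-b\D\bar p_t-c^2\D\bar p-k q_{1,t}\bar p_t=k\bar q\, p_{2,tt}+k\bar q_t\, p_{2,t},
\end{equation}
together with $\bar p(0)=\bar p_t(0)=0$. The plan is to bound $\nLinfLtwo{\nabla\bar p_t}^2+\nLinfLtwo{\D\bar p}^2$ by running an energy argument on \eqref{diffeq_contr}, and to recover $\nLtwoLtwo{\bar p_{tt}}$ directly from the PDE. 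The crucial point is that the $X$ norm sits strictly below the topology in which $M$ is bounded, so products of the form $\bar q\cdot p_{2,tt}$ can be absorbed by $\|\bar q\|_X$ times an $M$-bound and a small prefactor depending on $T$, $R$, and the data.

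\textbf{Energy step.} At the Galerkin level I would test \eqref{diffeq_contr} with $-\D\bar p_t$. The identity
\[
\prodLtwo{\alpha_1\bar p_{tt}}{-\D\bar p_t}=\tfrac12\ddt\nLtwo{\sqrt{\alpha_1}\nabla\bar p_t}^2-\tfrac12\prodLtwo{\alpha_{1,t}\nabla\bar p_t}{\nabla\bar p_t}+\prodLtwo{\bar p_{tt}\nabla\alpha_1}{\nabla\bar p_t},
\]
combined with $c^2\prodLtwo{\D\bar p}{\D\bar p_t}=\tfrac{c^2}{2}\ddt\nLtwo{\D\bar p}^2$ and the dissipation contribution $b\nLtwo{\D\bar p_t}^2$, forms the energy backbone. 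The perturbation and lower-order $q_{1,t}\bar p_t$ terms are controlled by means of $\alpha_1\in[\tfrac12,\tfrac32]$, the $M$-bounds $\|\alpha_{1,t}\|_{L^\infty(H^2)},\|q_{1,t}\|_{L^\infty(H^2)}\lesssim R$, and $H^2(\Omega)\hookrightarrow L^\infty(\Omega)$. For the source $k\prodLtwo{\bar q\,p_{2,tt}}{-\D\bar p_t}$ I would integrate by parts in space to avoid second derivatives of $p_2$, obtaining $k\prodLtwo{\nabla(\bar q\,p_{2,tt})}{\nabla\bar p_t}$; no boundary contribution appears since $\bar q|_{\partial\Omega}=0$ and $p_{2,tt}|_{\partial\Omega}=0$. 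This is then estimated via
\[
\|\bar q\|_{L^\infty(L^\infty)},\ \|\nabla\bar q\|_{L^\infty(L^6)}\leq C\nLinfLtwo{\D\bar q}\leq C\|\bar q\|_X,
\]
paired with $\nLtwoLtwo{\nabla p_{2,tt}}\leq R$ and $\|p_{2,tt}\|_{L^2(L^3)}\lesssim R$. The term $k\prodLtwo{\bar q_t\,p_{2,t}}{-\D\bar p_t}$ is treated analogously via $\nLinfLtwo{\nabla\bar q_t}\leq\|\bar q\|_X$ and $\|p_{2,t}\|_{L^\infty(L^\infty)}\lesssim R$. Young absorption followed by Gronwall delivers
\[
\nLinfLtwo{\nabla\bar p_t}^2+\nLinfLtwo{\D\bar p}^2+b\nLtwoLtwo{\D\bar p_t}^2\lesssim \exp(CT)\,k^2 R^2(T+1)\,\|\bar q\|_X^2.
\]

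\textbf{Recovering $\bar p_{tt}$ and the two regimes.} Solving \eqref{diffeq_contr} for $\bar p_{tt}$ and using $\alpha_1\geq\tfrac12$ gives
\[
\nLtwoLtwo{\bar p_{tt}}^2\lesssim \nLtwoLtwo{q_{1,t}\bar p_t}^2+b^2\nLtwoLtwo{\D\bar p_t}^2+\nLtwoLtwo{\D\bar p}^2+k^2\nLtwoLtwo{\bar q\,p_{2,tt}}^2+k^2\nLtwoLtwo{\bar q_t\,p_{2,t}}^2.
\]
The $b^2$ piece is rewritten as $b\cdot(b\nLtwoLtwo{\D\bar p_t}^2)\leq\bar b\cdot(\text{already bounded})$, and the remaining four terms are controlled as in the energy step. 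Combining the two estimates and replacing $R^2$ by $C(\|p_0\|_{H^3}^2+\nHtwo{p_1}^2)$ using Proposition~\ref{Prop:SelfMapping} yields
\[
\|\bar p\|_X^2\leq C_5\exp(C_6 T)\,k^2(T+1)\bigl(\|p_0\|_{H^3}^2+\nHtwo{p_1}^2\bigr)\|\bar q\|_X^2,
\]
so that condition \eqref{small1} forces the prefactor strictly below one. The main obstacle, and the source of the additive $+1$, is the term $\bar q\,p_{2,tt}$: the generic estimate \eqref{LinWest:Main_energy_est} provides only $\nabla p_{2,tt}\in L^2(0,T;L^2(\Omega))$, so $k^2\nLtwoLtwo{\bar q\,p_{2,tt}}^2\leq Ck^2 R^2\|\bar q\|_X^2$ without any time factor. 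Under the compatibility assumption \eqref{compat_West} with $p_1\in\Honethree$ and $q,\alpha\in\spaceW$ satisfying \eqref{q0p0}, Proposition~\ref{Prop:LinWest} upgrades this to $\nabla p_{2,tt}\in L^\infty(0,T;L^2(\Omega))$, hence $\nLtwoLtwo{\nabla p_{2,tt}}^2\leq T\nLinfLtwo{\nabla p_{2,tt}}^2\leq TR^2$, which inserts the missing factor of $T$ and removes the $+1$, producing condition \eqref{small2}.
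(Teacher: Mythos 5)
Your proof is correct and follows essentially the same route as the paper's: test the difference equation with $-\Delta\op_t$, integrate the source by parts, bound $\op_{tt}$ separately, apply Gronwall, and trace the $(T+1)$ versus $T$ dichotomy to whether $\nabla p^{(2)}_{tt}$ is only in $L^2(0,T;L^2(\Omega))$ or, under the enhanced estimate \eqref{LinWest:Main_energy_est_higher}, in $L^\infty(0,T;L^2(\Omega))$. The only (harmless) deviation is that you recover $\nLtwoLtwo{\op_{tt}}$ by dividing the PDE by $\alpha_1\geq\tfrac12$ and reusing the dissipation bound $b\nLtwoLtwo{\D\op_t}^2$, whereas the paper tests the equation with $\op_{tt}$ and thereby avoids needing that bound at all.
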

Fulfillment of condition ``\eqref{small1} or \eqref{small2}" can be achieved by either imposing smallness on the initial data in $\Honethree\times\Honetwo$ for a fixed given final time $T$, or by imposing short enough final time $T$ for fixed given initial data in $\Honethree\times\Honethree$. For the latter, in case of $b=0$, it suffices to have $\Honethree\times\Honetwo$ regular data. 	
\begin{proof}
In order to prove strict contractivity, we take $q^{(1)}$ and $q^{(2)}$ in $M$. Let then $p^{(1)}=\mathcal{T} q^{(1)}$ and $p^{(2)}=\mathcal{T} q^{(2)} $. Furthermore, we introduce the differences $\overline{p}=p^{(1)} -p^{(2)} $ and $\overline{q}= q^{(1)} -q^{(2)} $. Then we know that $\op$ solves the equation
\begin{equation} \label{West_contract_eq}
(1-k q^{(1)} )\op_{tt}-c^2 \Delta \op-b \Delta \op_t=k q_t^{(1)}\op_t+\underbrace{k\overline{q}_tp^{(2)}_t+k \overline{q}p^{(2)} _{tt}}_{:=f}
\end{equation}
and has zero initial conditions. \\
\indent Testing equation \eqref{West_contract_eq} with $-\Delta \op_t$, integrating over space and $(0,t)$, and noting that $\op(0)=\op_t(0)=0$ yields
\begin{equation}
\begin{aligned}
&\frac{1}{2}\nLtwo{\nabla \op_t(t)}^2+ \frac{c^2}{2} \nLtwo{\Delta \op(t)}^2+b \int_0^t \nLtwo{\D \op_t}^2 \ds \\
=&\,\begin{multlined}[t] k \intTO \nabla (q^{(1)}\op_{tt}) \cdot \nabla \op_t \dx \textup{d}s+ k \intTO \nabla (q^{(1)}_t\op_{t}) \cdot \nabla \op_t \dx \textup{d}s\\
+ \intTO \nabla f \cdot \nabla \op_t \dx \textup{d}s \end{multlined}\\
=&\, \,\begin{multlined}[t] \frac12 k \intO q^{(1)}(t) |\nabla \op_{t}(t)|^2\dx- \frac12 k \intTO q^{(1)}_t |\nabla \op_{t}|^2 \dx \textup{d}s\\  +k \intTO \op_{tt} \nabla q^{(1)} \cdot \nabla \op_t \dx \textup{d}s
+ k \intTO \nabla (q^{(1)}_t\op_{t}) \cdot \nabla \op_t \dx \textup{d}s\\
+ \intTO \nabla f \cdot \nabla \op_t \dx \textup{d}s. \end{multlined}
\end{aligned}
\end{equation}
From here, we further have by Young's inequality
\begin{equation} \label{est_1}
\begin{aligned}
&\frac{1}{2}\ulal \nLtwo{\nabla \op_t(t)}^2+ \frac{c^2}{2} \nLtwo{\Delta \op(t)}^2 +b \nLtwoLtwo{\Delta \op_t}^2 \\
\leq&\, \,\begin{multlined}[t] \frac12|k| \|q^{(1)}_t\|_{L^\infty (L^\infty)}  \nLtwoLtwo{\nabla \op_{t}}^2  + |k| \nLtwoLtwo{\op_{tt}} \nLinfLinf{\nabla q^{(1)}} \nLtwoLtwo{\nabla \op_t}\\
+ |k| \|\nabla q^{(1)}_t\|_{L^\infty(L^3)} \|\op_{t}\|_{L^2 (L^6)} \|\nabla \op_t \|_{L^2(L^2)}
+  \nLtwoLtwo{\nabla f} \nLtwoLtwo{\nabla \op_t}. \end{multlined}
\end{aligned}
\end{equation}
Note that we can derive a bound on the $\nabla f$ term as follows:
\begin{equation}\label{estf}
\begin{aligned}
\nLtwoLtwo{\nabla f}^2
=&\, k^2 \intTO |\nabla (\oq_t p^{(2)}_t+ \oq p^{(2)} _{tt}) |^2 \dx \textup{d}s  \\
\leq&\,\begin{multlined}[t] 2 k^2(\nLinfLtwo{\nabla \oq_t}^2 \nLtwoLinf{p^{(2)}_t}^2+\nLinfLthree{\oq_t}^2\nLtwoLsix{\nabla p^{(2)}_t}^2\\
+ \nLinfLthree{\nabla \oq}^2 \nLtwoLsix{p^{(2)}_{tt}}^2+\nLinfLinf{\oq}^2\nLtwoLtwo{\nabla p^{(2)}_{tt}}^2 ).\end{multlined}
\end{aligned}
\end{equation}
Testing  \eqref{West_contract_eq} additionally with ${\op_{tt}}$ produces
\begin{equation}
\begin{aligned}
&\ulal \|\op_{tt}\|^2_{L^2 L^2} +\frac{b}{2}\nLtwo{\nabla \op_t(t)}^2\\
\leq&\, \left(c^2\nLtwoLtwo{\Delta \op}+|k| \|q^{(1)}_t\|_{L^\infty(L^3)}\|\op_{t}\|_{L^2(L^6)}+\nLtwoLtwo{f} \right)\nLtwoLtwo{\op_{tt}}.
\end{aligned}
\end{equation}
Combining the estimates derived above and then employing Gronwall's inequality yields
\begin{equation}
\begin{aligned}
&  \nLtwoLtwo{\op_{tt}}^2 + \nLtwo{\nabla \op_t(t)}^2+ \nLtwo{\Delta \op(t)}^2+b \nLtwoLtwo{\D \op_t}^2+b\nLtwo{\nabla \op_t(t)}^2  \\
\leq&\, C_1 \exp{(C_2 T)} \|f\|^2_{L^2 (H^1)},
\end{aligned}
\end{equation}
where the positive constants $C_1$ and $C_2$ do not depend on $b$. From here we have 
\begin{equation} \label{contractivity_est_}
\begin{aligned}
\|\op\|^2_{X}
\leq&\, \begin{multlined}[t] C_3 \exp{(C_2 T)} k^2 (T \| p^{(2)}_t\|^2_{L^\infty (L^\infty)}+ T\|\nabla p^{(2)}_t\|^2_{L^\infty (L^6)}
\\+  \| p^{(2)}_{tt}\|^2_{L^2(L^6)}+\|\nabla p^{(2)}_{tt}\|^2_{L^2(L^2)} )\|\overline{q}\|^2_{X} \end{multlined}\\
\leq&\, C_4 \exp{(C_2 T)} k^2 (T+1) C_{\textup{lin}}(T)(\|p_0\|^2_{H^3}+\|p_1\|^2_{H^2}) \|\overline{q}\|^2_{X},\\
\leq&\, C_5 \exp{(C_6 T)} k^2 (T+1) (\|p_0\|^2_{H^3}+\|p_1\|^2_{H^2}) \|\overline{q}\|^2_{X}.
\end{aligned}
\end{equation}
Alternatively, assuming the initial data to be smoother so that the estimate \eqref{LinWest:Main_energy_est_higher} applies, we can estimate $\| p^{(2)}_{tt}\|^2_{L^2(L^6)}$ by $T\| p^{(2)}_{tt}\|^2_{L^\infty(L^6)}$ and thus get
\begin{equation} 
\|\op\|^2_{X}\leq\, C_5 \exp{(C_6 T)} k^2 \, T\, (\|p_0\|^2_{H^3}+\|p_1\|^2_{H^3}) \|\overline{q}\|^2_{X},
\end{equation}
 in place of \eqref{contractivity_est_}.
Here $C_3, \ldots, C_6>0$ do not depend on $b$. We, therefore, conclude that $\mathcal{T}$ is strictly contractive provided the data is small enough so that
\eqref{small1} or \eqref{small2} holds.
\end{proof}
The two previous results allow us to now employ a contraction principle and obtain a local well-posedness result for the Westervelt equation with a uniform bound in $b$ for small and smooth data.
\begin{theorem} \label{Thm:West_Wellposedness}
Let $\Omega \subset \R^n$, where $n \in \{1, 2, 3\}$, be bounded and $C^3$ regular. Let $k \in \R$. Furthermore, let $b \in [0, \bar{b})$. There exist constants $C_1$, $C_2$, $C_5$, $C_6>0$ that depend only on the domain $\Omega$ and the constant $c^2$, but neither on final time $T$ nor on $b$, such that if, for some $\delta$, $R>0$,  
\begin{equation}
%\exp{\left(|k|(\CHtwo+2\CHone^2) R^2 \,T\right)} \delta^2
%\leq \frac{c^2}{\CHtwo|k|}
{
C_{\textup{A}}\tilde{C}_1 \exp(\tilde{C}_2RT)\sqrt{\delta} R^{1/2} \leq \frac{1}{2|k|},
}
\end{equation}
as well as  \eqref{small1} or \eqref{small2} hold,
then for any initial data satisfying \eqref{IC_Westervelt}, \eqref{smallnessT}, and 
\begin{equation}
{\nLtwo{ p_1}^2+\nLtwo{\nabla p_0}^2 \leq \delta^2,}
\end{equation}
there exists a unique solution $p\in \spaceW$ of problem 
\begin{equation} \label{ibvp_West}
\left\{
\begin{aligned}
\ptt - b \D p_t - c^2\D p  =&\, \frac{k}{2} (p^2)_{tt}  &&\quad\text{ in }\Omega\times(0,T), \\[1mm]
p=&\,0  &&\quad\text{ on } \partial \Omega\times(0,T),\\[1mm]
(p, p_t)=&\,(p_0, p_1)  &&\quad\mbox{ in }\Omega\times \{0\},
\end{aligned} \right.
\end{equation}
where $\spaceW$ is defined in \eqref{regularity}. Furthermore, solution $p$ satisfies the estimate \eqref{estpnonlin}.
\end{theorem}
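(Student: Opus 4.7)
The plan is to apply Banach's fixed-point theorem to the mapping $\mathcal{T}:q\mapsto p$ from Section~\ref{Sec:West} on the set $M$ defined in \eqref{defM}. Under the smallness assumptions stated in the theorem, Proposition~\ref{Prop:SelfMapping} provides both $\mathcal{T}(M)\subset M$ and the energy estimate \eqref{estpnonlin} for $p=\mathcal{T}q$, while Proposition~\ref{Prop:Contraction} yields strict contractivity of $\mathcal{T}$ in the metric $d(q_1,q_2)=\|q_1-q_2\|_X$ induced by the norm \eqref{X_norm}. What still has to be put in place is that $(M,d)$ is a non-empty complete metric space, and then one must identify the resulting fixed point as an actual solution of \eqref{ibvp_West} in the class $\spaceW$ and address uniqueness there.

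Non-emptiness of $M$ is immediate from Proposition~\ref{Prop:LinWest} applied to the linear problem with coefficient $\alpha\equiv 1$ (or any fixed admissible $q$ built from $p_0$, $p_1$). The technical step is completeness of $(M,d)$: given a $\|\cdot\|_X$-Cauchy sequence $(q^{(j)})\subset M$, I would use the uniform bounds built into the definition of $M$ together with Banach--Alaoglu to extract, along a subsequence, weak-$\star$ limits in $L^\infty(0,T;\Honethree)$, $W^{1,\infty}(0,T;\Honetwo)$, $H^2(0,T;\Hone)$ and, when $b>0$, also in $L^2(0,T;\Honethree)$. Uniqueness of distributional limits identifies these weak-$\star$ limits with the strong $X$-limit $q^\star$, and weak-$\star$ lower semicontinuity of norms transfers the inequalities in \eqref{defM} to $q^\star$. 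The $\nLinfLinf{q^\star}\leq\frac{1}{2|k|}$ bound is preserved via the compact embedding $H^2(\Om)\hookrightarrow\hookrightarrow L^\infty(\Om)$ combined with strong compactness in time in the spirit of Aubin--Lions, and the initial conditions $q^\star(0)=p_0$, $q^\star_t(0)=p_1$ survive thanks to the weak continuity in time built into $\spaceW$.

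Banach's theorem then produces a unique $p\in M$ with $\mathcal{T}(p)=p$, which by the very definition of $\mathcal{T}$ solves \eqref{linWest_fp} with $q$ replaced by $p$, i.e., the nonlinear problem \eqref{ibvp_West} in $\spaceW$ with the correct boundary and initial data; the estimate \eqref{estpnonlin} is inherited from Proposition~\ref{Prop:SelfMapping} applied to this fixed point. For uniqueness in all of $\spaceW$ (not merely inside $M$) I would take two candidate solutions $p^{(1)},p^{(2)}\in\spaceW$ with identical initial data, observe that on a possibly shorter time interval both lie in some $M$ for a suitable $R$, and invoke the contraction estimate of Proposition~\ref{Prop:Contraction} together with Gronwall's inequality applied to their difference to conclude $p^{(1)}=p^{(2)}$. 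The main obstacle I anticipate is precisely the completeness of $(M,d)$, since $M$ is controlled in a strictly stronger topology than the one of $X$ in which the contraction acts; this classical quasilinear mismatch is exactly what the weak-$\star$ compactness and lower semicontinuity arguments sketched above are designed to resolve, after which the theorem follows as a direct consequence of Propositions~\ref{Prop:LinWest}, \ref{Prop:SelfMapping}, and~\ref{Prop:Contraction}.
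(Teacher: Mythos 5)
Your proposal is correct and follows essentially the same route as the paper: Banach's fixed-point theorem on $(M,d)$ with $d$ induced by the $X$-norm, completeness of $M$ established via weak-$\star$ closedness in $\spaceW\cap\LinfLinf$ (Banach--Alaoglu) plus identification of the weak-$\star$ limit with the strong $X$-limit, self-mapping and contractivity supplied by Propositions~\ref{Prop:SelfMapping} and~\ref{Prop:Contraction}, and global uniqueness in $\spaceW$ by a Picard--Lindel\"of-type local Lipschitz argument. The only cosmetic difference is your invocation of Aubin--Lions for preserving the $L^\infty$ bound, where the paper simply treats $M$ as a weakly-$\star$ closed ball in $\spaceW\cap\LinfLinf$.
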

\begin{proof}
{The proof follows by employing the Banach fixed-point theorem to the mapping 
	\[
	\mathcal{T}: M \ni q \mapsto p,
	\]
with the ball $M$ defined in \eqref{defM} and $p$ being the solution of 
\begin{equation} 
(1-kq)\ptt - b \D p_t - c^2\D p - k q_t p_t =0,
\end{equation}
with initial conditions $p(0)=q(0)=p_0$, $p_t(0)=q_t(0)=p_1$.}\\
\indent We {first} note that the space $(M, d)$ with the metric $d(x,y)=\|x-y\|_{X}$ is a closed subset of a complete normed space, {where we recall that
	\begin{equation}
\|p\|^2_{X}= \nLtwoLtwo{\ptt}^2+\nLinfLtwo{\nabla \pt}^2 + \nLinfLtwo{\Delta p}^2.
\end{equation}} Indeed, $M$ is a ball in $\spaceW\cap\LinfLinf$, thus it is weakly-$\star$ closed in $\spaceW$ by virtue of the Banach--Alaoglu theorem. Therefore, any Cauchy sequence with respect to the $X$ norm in $M$ converges to some \[y\in L^\infty(0,T; \Honetwo) \cap W^{1, \infty}(0,T; H_0^1(\Omega)) \cap H^2(0,T; L^2(\Omega))\] and has a  weakly-$\star$ in $\spaceW$ convergent subsequence with some limit  $x\in M$. Due to uniqueness of limits we have $y=x\in M$.\\
\indent {Proposition~\ref{Prop:SelfMapping} guarantees the $\mathcal{T}$ is well-defined self-mapping, whereas Proposition~\ref{Prop:Contraction} yields strict contractivity.} The assertion then follows by Banach's fixed-point theorem; cf., e.g., \cite[Theorem 2.1]{Teschl_ode}. Note that this implies uniqueness, first of all, only on $M$. Global uniqueness {in $\spaceW$} follows analogously to the standard uniqueness argument in the Picard-Lindel\"of Theorem under a local Lipschitz condition; see, e.g., \cite[Theorem 1.3]{Teschl_ode}.
\end{proof}
\indent We emphasize that smallness of the initial data is only required in a weaker norm than the one of $\spaceW_0$. The other smallness constraints in Theorem~\ref{Thm:West_Wellposedness} can be achieved by making either $\|(p_0,p_1)\|_{\spaceW_0}$ small or final time $T$ short enough. This theoretical framework thus agrees well with ultrasonic applications, where typically the final time will be short and the data smooth, but not necessarily small; see, for example,~\cite[\S 5]{kaltenbacher2007numerical} and~\cite{walsh2007finite}. \\
\indent When $n=1$, already the space $H^1(\Om)$ embeds into $L^\infty(\Om)$, which allows us to simplify the theoretical framework. Thus we expect that the results of Theorem~\ref{Thm:West_Wellposedness} can be sharpened in that case; however, given the real-world ultrasonic applications that motivate our work, we do not pursue that path here. 

%%%%%%%%%%%%%%%%%%%%%%%%%%%%%%%%%%%%%%%%%%%%
\section{The inviscid limit of the Westervelt equation}  \label{Sec:LimitWest}
Equipped with a uniform bound in $b$, we are now ready to prove a limiting result for the Westervelt equation as $b \rightarrow 0^+$. To formulate the result, we introduce the space
\begin{equation}\label{X0}
\textup{E}=W^{1,\infty}(0,T;L^2(\Omega))\cap L^\infty(0,T;H_0^1(\Omega))
\end{equation}
equipped with the standard energy norm for the wave equation
\begin{equation} \label{energy_norm}
\|p\|^2_{\textup{E}}= \sup_{t\in(0,T)} \nLtwo{p_t(t)}^2+\sup_{t\in(0,T)}\nLtwo{\nabla p(t)}^2. 
\end{equation}
\begin{theorem} \label{Thm:West_WeakLimit}
Under the conditions of Theorem~\ref{Thm:West_Wellposedness}, the family of solutions $\{p^{(b)}\}_{b>0}$ to the Westervelt equation converges in the topology induced by the energy norm for the wave equation to a solution $p$ of the inviscid Westervelt equation at a linear rate
\begin{equation}
\|p^{(b)}-p\|_{\textup{E}}\lesssim b \mbox{ as } b\to0.
\end{equation}
\end{theorem}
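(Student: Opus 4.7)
Denote by $p^{(b)}$ the solution of the Westervelt problem \eqref{ibvp_West} with damping $b>0$ and by $p=p^{(0)}$ its counterpart for $b=0$. Both exist uniquely in $\spaceW$ by Theorem~\ref{Thm:West_Wellposedness}, and both satisfy the uniform bound \eqref{estpnonlin}; crucially, $\nabla p^{(b)}_{tt}\in L^2(L^2)$, $\D p^{(b)}_t\in L^\infty(L^2)$, and $\nabla\D p^{(b)}\in L^\infty(L^2)$ with constants independent of $b\in[0,\bar b)$. Setting $\bar p = p^{(b)} - p$ and $\alpha^{(b)} = 1 - k p^{(b)}$, a direct subtraction of the two PDEs yields the difference equation
\begin{equation*}
\alpha^{(b)}\,\bar p_{tt} - c^2 \D \bar p = k\, p_{tt}\, \bar p + k\,(p^{(b)}_t + p_t)\,\bar p_t + b\, \D p^{(b)}_t,
\end{equation*}
together with $\bar p(0)=0$, $\bar p_t(0)=0$ and $\bar p|_{\partial\Omega}=0$.

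The plan is to test this equation with $\bar p_t$ and integrate over $\Omega\times(0,t)$, exactly as in \eqref{lin_lower_bound}. This produces the identity
\begin{equation*}
\tfrac12 \nLtwo{\sqrt{\alpha^{(b)}(t)}\,\bar p_t(t)}^2 + \tfrac{c^2}{2}\nLtwo{\nabla \bar p(t)}^2 = \tfrac12 \int_0^t(\alpha^{(b)}_t \bar p_t,\bar p_t)_{L^2}\ds + \textup{RHS}_{\textup{nl}} + b \int_0^t (\D p^{(b)}_t, \bar p_t)_{L^2}\ds,
\end{equation*}
where $\textup{RHS}_{\textup{nl}}$ collects the nonlinear contributions $k(p_{tt}\bar p,\bar p_t)_{L^2}$ and $k((p^{(b)}_t+p_t)\bar p_t,\bar p_t)_{L^2}$. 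These nonlinear terms, together with the $\alpha^{(b)}_t$ term, are controlled in the standard way by $\|p_{tt}\|_{L^\infty(L^2)} \|\bar p\|\,\|\bar p_t\| + (\|p^{(b)}_t\|_{L^\infty(L^\infty)} + \|p_t\|_{L^\infty(L^\infty)} + \|\alpha^{(b)}_t\|_{L^\infty(L^\infty)})\|\bar p_t\|^2$, the $L^\infty$ norms of the time derivatives being finite by the embedding $\Honetwo\hookrightarrow L^\infty(\Omega)$ and the uniform estimate \eqref{estpnonlin}; a Poincar\'e bound on $\|\bar p\|_{L^2}$ by $\|\nabla\bar p\|_{L^2}$ then feeds these terms into Gronwall's inequality.

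The main obstacle is the dissipative remainder $b\int_0^t(\D p^{(b)}_t,\bar p_t)_{L^2}\ds$, since a crude bound like $b\|\D p^{(b)}_t\|_{L^\infty(L^2)}\|\bar p_t\|_{L^1(L^2)}$ only yields a rate of $\sqrt{b}$ unless we are more careful. The trick is to integrate by parts twice, first in space and then in time, exploiting that $\bar p_t|_{\partial\Omega}=0$ and $\bar p(0)=0$:
\begin{equation*}
b\int_0^t (\D p^{(b)}_t,\bar p_t)_{L^2}\ds = -b(\nabla p^{(b)}_t(t),\nabla \bar p(t))_{L^2} + b\int_0^t (\nabla p^{(b)}_{tt}, \nabla \bar p)_{L^2}\ds.
\end{equation*}
The boundary-in-time term is estimated by Young's inequality as $\varepsilon\nLtwo{\nabla\bar p(t)}^2 + C_\varepsilon b^2 \nLinfLtwo{\nabla p^{(b)}_t}^2$, and the integral by $\varepsilon\nLinfLtwo{\nabla\bar p}^2 + C_\varepsilon b^2\,T\,\nLtwoLtwo{\nabla p^{(b)}_{tt}}^2$; both $\nabla p^{(b)}_t\in L^\infty(L^2)$ and $\nabla p^{(b)}_{tt}\in L^2(L^2)$ are uniformly bounded in $b$ by \eqref{estpnonlin}. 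For $\varepsilon$ small enough, the $\varepsilon\|\nabla\bar p\|^2$ contributions are absorbed into the left-hand side. Applying Gronwall's inequality to the resulting differential inequality (after taking the supremum over $t\in(0,T)$) then gives
\begin{equation*}
\|\bar p\|_{\textup{E}}^2 \leq C\, b^2,
\end{equation*}
with $C$ independent of $b$, which is precisely the claimed linear rate. Existence of the limit as an actual solution of the inviscid Westervelt equation is already guaranteed by Theorem~\ref{Thm:West_Wellposedness} with $b=0$; uniqueness within $\spaceW$ also established there shows that $p^{(b)}$ converges to this specific $p$.
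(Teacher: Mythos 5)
Your overall strategy---write the difference equation for $\op=p^{(b)}-p^{(0)}$, test with $\op_t$, and close with Gronwall using the $b$-uniform bounds of \eqref{estpnonlin}---is exactly the paper's strategy; the paper merely phrases it as a Cauchy-in-$b$ estimate for general $b,b'$ and then sets $b'=0$, which is equivalent to your direct comparison since Theorem~\ref{Thm:West_Wellposedness} covers $b=0$. The one place you genuinely deviate is the treatment of the damping remainder, and there your motivation is mistaken: the ``crude'' bound does \emph{not} lose half an order. Since \eqref{estpnonlin} gives $\esssup_t\nLtwo{\D \pt^{(b)}(t)}\lesssim R$ uniformly in $b$, Young's inequality applied directly to $b\intTO \D p^{(b)}_t\,\op_t\dxs$ yields $\tfrac{b^2}{2}\nLtwoLtwo{\D p^{(b)}_t}^2+\tfrac12\nLtwoLtwo{\op_t}^2$; the first term is $O(b^2)$ and the second is absorbed by Gronwall, so the linear rate falls out immediately---this is precisely what the paper does. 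The $O(\sqrt b)$ loss you are guarding against occurs only when one has merely the $b$-weighted bound $b\nLtwoLtwo{\nabla\D p_t}^2\lesssim 1$ (as in the lower-regularity linear setting of~\cite{showalter1976regularization}), which is not the situation here. Your double integration by parts is nevertheless correct (the boundary and initial terms vanish as you say, and $\nabla p^{(b)}_t\in L^\infty(L^2)$, $\nabla p^{(b)}_{tt}\in L^2(L^2)$ are uniformly controlled), so it gives a valid, if roundabout, alternative.

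Two small repairs are needed in your estimate of the nonlinear terms. First, the trilinear term $k\intTO p_{tt}\,\op\,\op_t\dxs$ cannot be bounded by a product of three $L^2(\Omega)$ norms; you need a H\"older splitting such as $\nLfour{p_{tt}}\nLfour{\op}\nLtwo{\op_t}$, and then $H^1(\Omega)\hookrightarrow L^4(\Omega)$ together with $\nabla p_{tt}\in L^2(0,T;L^2(\Omega))$ gives $p_{tt}\in L^2(0,T;L^4(\Omega))$, which is what the paper uses. Second, and relatedly, $\|p_{tt}\|_{L^\infty(L^2)}$ is not among the $b$-uniform bounds of \eqref{estpnonlin} (the space $\spaceW$ only provides $p_{tt}\in L^2(0,T;H^1(\Omega))$ uniformly; the $W^{2,\infty}$ control is available only at $b=0$ or under the extra hypotheses of \eqref{LinWest:Main_energy_est_higher}), so your stated bound should be rewritten with the $L^2(0,T;L^4(\Omega))$ norm of $p_{tt}$. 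With these adjustments the Gronwall argument closes exactly as you describe and the proof is complete.
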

\begin{proof}
Following the general approach of~\cite{tani2017mathematical}, the statement follows by proving that $\{p^{(b)}\}$ is a Cauchy sequence in $\textup{E}$. \\
\indent Let $b, b' \in (0, \overline{b})$. Let $p^{(b)}$ and $p^{(b')}$ be the solutions of the Westervelt equation with the sound diffusivity $b$ and $b'$, respectively, and with homogeneous Dirichlet data.  We assume the initial conditions and final time are chosen so that the assumptions of Theorem~\ref{Thm:West_Wellposedness} are satisfied. Since these conditions are independent of $b$, solutions exist on a maximal common interval $[0,T]$ with $T>0$ for all $b, b' \in [0, \overline{b})$.\\
\indent Then $\op=p^{(b)}-p^{(b')}$ solves the equation
\begin{equation} \label{West_Cauchy_eq}
(1-k p^{(b)} )\op_{tt}-c^2 \Delta \op-b \Delta \op_t=k\overline{p}_t (p_t^{(b)}+p^{(b')}_t)+k \overline{p}p^{(b')} _{tt}+(b-b')\Delta p_t^{(b')}.
\end{equation}
Testing with $\op_t$ and integrating over space and time leads to
\begin{equation}
\begin{aligned}
&\frac12\nLtwo{\sqrt{\alpha(t)}\op_t(t)}^2+\frac{c^2}{2}\nLtwo{\nabla \op(t)}^2+b \nLtwoLtwo{\nabla \op_t}^2 \\
=&\,\begin{multlined}[t]-\frac12k\int_0^t \intO p^{(b)}_t \op_t^2 \dxs+  k\int_0^t \intO  (p_t^{(b)}+p_t^{(b')})\op_t^2 \dxs+k\int_0^t \intO p _{tt}^{(b')} \op \op_t \dxs \\
+ (b-b')\int_0^t \intO  \D \pt^{(b')} \op_t \dxs. \end{multlined}
\end{aligned}
\end{equation}
From here, by using Young's inequality and then taking the supremum over $(0,\tau)$, we have the estimate
\begin{equation} \label{est_conv_West}
\begin{aligned}
&\sup_{t\in(0,\tau)} \nLtwo{\op_t(t)}^2+\sup_{t\in(0,\tau)}\nLtwo{\nabla \op(t)}^2 +b \int_0^\tau\nLtwo{\nabla \op_t}^2\ds\\
\lesssim &\,\begin{multlined}[t] (\nLinfLinf{p^{(b)}_t}+\nLinfLinf{p_t^{(b')}} )\int_0^\tau \nLtwo{\op_t}^2 \ds\\
+\varepsilon \sup_{s\in(0,\tau)} \nLtwo{\op_t(s)}^2 
+\nLtwoLfour{p _{tt}^{(b')}}^2 \int_0^\tau\nLfour{\op}^2\ds   
\\+ (b-b')^2 \nLtwoLtwo{\Delta p_t^{(b')}}^2+ \int_0^\tau\nLtwo{\op_t}^2\ds, \end{multlined}
\end{aligned}
\end{equation}
with $\varepsilon>0$ small enough, but independent of $b$.
Dominating the $\varepsilon$ term on the right-hand side by the first term on the left-hand side and using continuous embeddings as well as the uniform bounds $\|p^{(b)}\|_{\spaceW},\, \|p^{(b')}\|_{\spaceW}\, \lesssim R$ resulting from Theorem~\ref{Thm:West_Wellposedness}, we conclude that
\begin{equation} \label{intermediate}
\begin{aligned}
&\sup_{t\in(0,\tau)} \nLtwo{\op_t(t)}^2+\sup_{t\in(0,\tau)}\nLtwo{\nabla \op(t)}^2 +b \int_0^\tau\nLtwo{\nabla \op_t}^2\ds\\
\lesssim &\,\begin{multlined}[t] R\int_0^\tau \nLtwo{\op_t}^2 \ds+ R\int_0^\tau\nLtwo{\nabla\op}^2\ds   
+ R^2(b-b')^2  \end{multlined}
\end{aligned}
\end{equation}
since $p^{(b)}$, $p^{(b')}\in M$. Gronwall's inequality therefore yields
\begin{equation}
\|p^{(b)}-p^{(b')}\|_{\textup{E}}=\|\op\|_{\textup{E}}\lesssim |b-b'|.
\end{equation}
This estimate, which remains valid for $b'=0$ also yields the convergence rate 
\begin{equation}
\|p^{(b)}-p\|_{\textup{E}}=\|\op\|_{\textup{E}}\lesssim b,
\end{equation}
where $p$ is a solution to the inviscid version of the Westervelt equation.
\end{proof}
\begin{remark}[Convergence in a higher-order norm] \label{rem:rate}
Similarly to the proof of contractivity in Proposition~\ref{Prop:Contraction}, we can also establish convergence with respect to the topology used there
\begin{equation}\label{X}
X=H^2(0,T;L^2(\Omega))\cap W^{1,\infty}(0,T;H_0^1(\Omega))\cap L^\infty(0,T;\Honetwo);
\end{equation}
that is, a stronger topology than $\textup{E}$, with a slower convergence rate 
\begin{equation}\label{rateX}
\|p^{(b)}-p\|_{X}\lesssim \sqrt{b} \mbox{ as } b\to0.
\end{equation}
Indeed, estimating the right-hand side terms up to the last one in \eqref{West_Cauchy_eq} goes through analogously to the estimates for the difference equation \eqref{West_contract_eq}; cf. \eqref{estf}. For the last term we obtain 
\begin{equation}
\nLtwoLtwo{(b-b')\Delta p_t^{(b')}} \leq |b-b'|\, b^{-1/2}R,
\end{equation}
with $R$ as in \eqref{defM}; see also \eqref{smallnessinit} and \eqref{estpnonlin}. As in the proof of Proposition~\ref{Prop:Contraction}, this yields 
\[\|p^{(b)}-p\|_{X}\lesssim |b-b'| b^{-1/2}.\] 
Thus, with $b'=0$ above, we obtain the convergence rate \eqref{rateX}.
\end{remark} 

%%%%%%%%%%%%%%%%%%%%%%%%%%%%%%%%%%%%%%%%%%%%
\section{Analysis of the Kuznetsov equation}  \label{Sec:Kuznetsov}
We wish to extend our considerations of the vanishing sound diffusivity dynamics to the Kuznetsov equation next. As a by-product, we will also obtain results for the Westervelt equation in the acoustic potential formulation \eqref{West_potential} by setting $\sigma=0$ and $\kappa =\frac{2}{c^{2}}(\frac{B}{2A}+1)$ in \eqref{Kuznt}. \\
\indent Compared to the analysis of the Westervelt equation, now the factor next to the second time derivative involves $\psi_t$. This implies that a uniform bound for $\|\psi_t\|_{L^\infty(L^\infty)}$ is needed. Together with the presence of quadratic gradient nonlinearity, higher-order energy analysis is required in this case to arrive in the end at a uniform bound with respect to the parameter $b$.  
\subsection{Energy analysis of the linearized Kuznetsov equation}
Similarly to before, we begin by studying a linear non-degenerate version of the nonlinear equation in question. We consider the following linearization:
\begin{equation} \label{ibvp_linKuzn}
\left\{
\begin{aligned}
\alpha\psi_{tt} -c^2\D \psi - b \D \psi_t=&\, \sigma\nabla\phi\cdot\nabla\psi_t  &\quad\text{ in }\Omega\times(0,T), \\[1mm]
\psi=&\,0  &\quad\text{ on } \partial \Omega\times(0,T),\\[1mm]
(\psi, \psi_t)=&\,(\psi_0, \psi_1)  &\quad\mbox{ in }\Omega\times \{0\},
\end{aligned} \right.
\end{equation}
with $\alpha=1-\kappa \phi_t$ and $\kappa \in \R$.  By abbreviating $p=\psi_t$, $q=\phi_t$, the PDE above can be rewritten as 
\begin{equation}\label{Kuznetsov_psi}
\alpha p_t - b \D p -c^2\D \psi = \sigma\nabla\phi\cdot\nabla p 
\end{equation}
with $\alpha=1-\kappa q $. \\
\indent The functions $\phi$ and $q=\phi_t$ are required to have the following regularity:
\begin{equation}\label{reg_qphi_linKuz}
\phi\in L^\infty(0,T; \Honefour), \ q\in L^\infty(0,T;\Honethree), \ q_t\in L^\infty(0,T;\Honetwo),
\end{equation}
which implies that the coefficient $\alpha$ has the regularity 
\begin{equation}\label{eq:alphagammaf_reg_Kuzn}
\begin{aligned} 
&\alpha \in \spaceK_{\alpha}=
L^\infty(0,T;\Honethree)\cap W^{1,\infty}(0,T;\Honetwo);
\end{aligned}
\end{equation}
cf. \eqref{sobolev_withtraces}. 
Again, a compatibility condition on the initial data will be needed, which here reads as 
\begin{equation}\label{compat_Kuz}
\psi_{tt}(0)=\psi_2 =\alpha(0)^{-1} (b \D \psi_1 +c^2\D \psi_0 + \sigma\nabla\phi(0)\cdot\nabla\psi_1)\,.
\end{equation}
We are now ready to state a well-posedness result for this linear problem.
\begin{proposition} \label{Prop:LinKuzn}
	Assume that the domain $\Omega \subset \R^n$, where $n \in \{1, 2, 3\}$, is bounded and $C^4$ regular. Let $b \in [0, \bar{b})$ and  $T>0$. Let $\phi\in L^\infty(0,T;\Honefour)$. Assume that  $\alpha \in \spaceK_{\alpha}$ and that there exist $\ulal$, $\olal>0$ such that
	\begin{equation} \label{nondegeneracy_Kuzn}
	\ulal \leq \alpha(x, t)\leq \olal \ \mbox{ on }\Omega \ \ \text{  a.e. in } \Om \times (0,T).
	\end{equation}
	Furthermore, assume that
	\begin{equation} \label{IC_Kuznetsov}
	(\psi_0, \psi_1) \in \spaceK_0=\Honefour\times \Honethree
	\end{equation}
and \eqref{compat_Kuz} holds.
	 Then there exists a unique  solution $\psi$ of the problem \eqref{ibvp_linWest} such that
	\begin{equation}\label{regularity_Kuzn}
	\begin{aligned}
	\psi \in \, \spaceK =& \,\begin{multlined}[t] H^3(0,T;H_0^1(\Omega))\cap W^{2,\infty}(0,T;\Honetwo)\\ 
												\cap W^{1,\infty}(0,T;\Honethree)\cap L^\infty(0,T;\Honefour).
	\end{multlined}
	\end{aligned}
	\end{equation}
	Furthermore, this solution satisfies the estimate
\begin{equation} \label{LinKuzn:Main_energy_est}
\begin{aligned}
& \begin{multlined}[t] \nLtwoLtwo{\nabla \psi_{ttt}}^2\,+\esssup_{t \in (0,T)} \nLtwo{\D \psi_{tt} (t)}^2\,
+\esssup_{t \in (0,T)} \nLtwo{\nabla \D \psi_t(t)}^2 \\+\esssup_{t \in (0,T)} \nLtwo{\D^2 \psi(t)}^2 
+b\nLtwoLtwo{\nabla \D \psi_{tt}}^2 +b\nLtwoLtwo{\D^2 \psi_t}^2
\end{multlined}\\
\lesssim&\, \begin{multlined}[t] \nLtwo{\D^2 \psi_0}^2+\nLtwo{\nabla\D \psi_1}^2+\nLtwo{\D[\nabla\phi(0)\cdot\nabla\psi_1]}^2,
\end{multlined}
\end{aligned}
\end{equation}
where \[\nLtwo{\D[\nabla\phi(0)\cdot\nabla\psi_1]}\leq 2(\CHtwo+\CHone^2)\nLtwo{\nabla\D\phi(0)}\nLtwo{\nabla\D \psi_1}.\]  
The hidden constant in \eqref{LinKuzn:Main_energy_est} tends to infinity as $T \rightarrow \infty$, but does not depend on the parameter $b$.
\end{proposition}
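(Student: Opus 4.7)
The plan is to mirror the strategy of Proposition~\ref{Prop:LinWest}, but lifting every bound by one spatial order to accommodate the gradient nonlinearity $\sigma\nabla\phi\cdot\nabla\psi_t$, and to use time-differentiation to capture $\psi_{ttt}$. First, I would set up a Galerkin approximation $\psi^n = \sum_{i=1}^n \xi_i(t) w_i$ with $\{w_i\}$ the (smooth) $L^2$-orthonormal eigenfunctions of $-\Delta$ on $H_0^1(\Omega)$, so that $w_i \in \Honefour$ and $\D^k w_i$ vanishes on $\partial\Omega$ for the relevant $k$. Initial data are chosen as Galerkin projections of $\psi_0$, $\psi_1$ (and, in the enhanced step below, of $\psi_2$ from the compatibility condition \eqref{compat_Kuz}), converging in $\Honefour$, $\Honethree$, $\Honetwo$, respectively. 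Local existence and $H^2$-in-time regularity for the ODE system follow as in the Westervelt proof.

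The a priori estimates come in two layers. For the spatial-order bounds (those involving $\D^2 \psi$, $\nabla\D\psi_t$, $\D\psi_{tt}$ and the associated $b$-damping terms), I would test the semi-discrete Kuznetsov equation \eqref{Kuznetsov_psi}, rewritten as
\[
\psi_{tt}^n - \tfrac{b}{\alpha}\D \psi_t^n - \tfrac{c^2}{\alpha}\D \psi^n = \tfrac{\sigma}{\alpha}\nabla\phi\cdot\nabla\psi_t^n,
\]
with $-\D^3 \psi_t^n$. The identity to target is
\[
\begin{aligned}
&\tfrac12\ddt \nLtwo{\nabla\D\psi_t^n}^2 + b\nLtwo{\sqrt{\ooal}\,\D^2\psi_t^n}^2 + \tfrac{c^2}{2}\ddt\nLtwo{\sqrt{\ooal}\,\D^2 \psi^n}^2 \\
&\qquad = \text{(lower-order terms involving $\alpha$, $\ooal$ and $\nabla\phi$)},
\end{aligned}
\]
obtained by repeated integration by parts, exactly in the spirit of \eqref{energy_id}, exploiting $\psi_t^n = \D\psi^n = 0$ on $\partial\Omega$. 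The factors $\nabla[\ooal]$, $\D[\ooal]$, $[\ooal]_t$ are controlled in $L^\infty$ and $L^4$ via \eqref{nondegeneracy_Kuzn} and the $\spaceK_\alpha$-regularity, and the gradient nonlinearity contributes a term $\sigma(\nabla\phi\cdot\nabla\psi_t^n, -\D^3\psi_t^n)$, which after integration by parts splits into pieces of the form $\int \D^2\phi\cdot\D^2\psi_t^n$, $\int \D\nabla\phi\cdot\nabla\D\psi_t^n$, $\int \nabla\phi\cdot\nabla\D^2\psi_t^n$; these are tamed by the continuous embeddings $H^2\hookrightarrow L^\infty$, $H^1\hookrightarrow L^6$, using $\phi\in L^\infty(0,T;\Honefour)$, and either absorbed (the $\nabla\D^2\psi_t^n$ piece into the $b$-term) or treated by Young's inequality with the $\D\psi_t^n$ bound. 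From \eqref{Kuznetsov_psi} itself, $\nabla\psi_{tt}^n$ is then expressed in terms of the already-controlled quantities, precisely as in \eqref{LinWest:id_ptnn}--\eqref{LinWest:bound_ptnn}.

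For the time-order bound $\nLtwoLtwo{\nabla\psi_{ttt}}$ together with $\nLinfLtwo{\D\psi_{tt}}$ and the remaining $b$-term $b\nLtwoLtwo{\nabla\D\psi_{tt}}^2$, I would introduce a separate Galerkin discretization of the time-differentiated PDE
\[
\alpha\psi_{ttt} + \alpha_t\psi_{tt} - c^2\D\psi_t - b\D\psi_{tt} = \sigma\bigl(\nabla\phi_t\cdot\nabla\psi_t + \nabla\phi\cdot\nabla\psi_{tt}\bigr),
\]
following the $\tilde p^n$ device from the Westervelt proof (cf.\ \eqref{Galerkintimediff}). The compatibility condition \eqref{compat_Kuz} ensures that the initial data $(\tilde\psi^n(0), \tilde\psi^n_t(0), \tilde\psi^n_{tt}(0))$ are well-defined Galerkin projections of $(\psi_0, \psi_1, \psi_2)$ and that the solution of the differentiated problem coincides (after integrating once in time) with the original $\psi$. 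Testing with $\tilde\psi^n_{ttt} - \D\tilde\psi^n_{tt}$ as in \eqref{id_additionalreg} and using the identities there yields
\[
\nLtwo{\sqrt{\alpha}\tilde\psi^n_{ttt}}^2 + \tfrac12\ddt \nLtwo{\sqrt{\alpha}\nabla\tilde\psi^n_{tt}}^2 + \tfrac{c^2}{2}\ddt\nLtwo{\D\tilde\psi^n_t}^2 + b\nLtwo{\D\tilde\psi^n_{tt}}^2 + \tfrac{b}{2}\ddt\nLtwo{\nabla\tilde\psi^n_{tt}}^2 = \text{rhs},
\]
and the new gradient-nonlinearity contributions $\sigma\nabla\phi_t\cdot\nabla\psi_t$ and $\sigma\nabla\phi\cdot\nabla\psi_{tt}$ are controlled in $H^1$ via the $\Honethree$-regularity of $\phi_t$ and $\Honefour$-regularity of $\phi$, combined with Sobolev embeddings. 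Gronwall's inequality, applied after summing both layers, delivers a bound uniform in $b$ and $n$, with right-hand side $\nLtwo{\D^2\psi_0}^2 + \nLtwo{\nabla\D\psi_1}^2 + \nLtwo{\D\psi_2}^2$; the last factor is bounded by $\nLtwo{\D[\nabla\phi(0)\cdot\nabla\psi_1]}$ up to $b\nLtwo{\nabla\D\psi_1}^2$-type contributions.

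Finally, I would pass to the limit by extracting weakly (weakly-$\star$) convergent subsequences in the spaces that appear in \eqref{regularity_Kuzn}, identify the limit as a solution via a density and test-function argument (cf.\ \eqref{1}), recover the initial conditions using the Aubin--Lions-type embeddings $\psi^n\in H^1(0,T;\Honefour)\hookrightarrow C([0,T];\Honefour)$ (for $b>0$) or continuity in weaker spaces (for $b=0$), together with the compatibility \eqref{compat_Kuz} for $\psi_{tt}(0)$. Uniqueness follows from a lower-order energy estimate analogous to \eqref{lin_lower_bound}: testing the homogeneous version with $\psi_t$ and using Gronwall. The main technical obstacle is the gradient nonlinearity at the highest order: the commutator terms arising when $\D^3$ meets $\nabla\phi\cdot\nabla\psi_t$ must be split so that any piece containing $\D^2\psi_t^n$ at top order is absorbed into the $b$-term, while the remainder is bounded uniformly in $b$ by the already-controlled quantities---this is exactly the reason for requiring the full $\Honefour$-regularity on $\phi$.
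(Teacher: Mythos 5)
Your overall architecture (Galerkin with Laplacian eigenfunctions, two energy layers, passage to the limit, uniqueness via a low-order test) is reasonable, but the central idea of the paper's proof is missing, and the step you propose in its place would fail. The critical quantity is $\esssup_{t}\nLtwo{\D^2\psi(t)}$, i.e.\ the uniform-in-$b$ bound of $\psi$ in $L^\infty(0,T;\Honefour)$. You propose to obtain it by testing with $-\D^3\psi_t^n$ and, for the top-order piece of the gradient nonlinearity ($\nabla\phi\cdot\nabla\D\psi_t^n$ paired against $\D^2\psi_t^n$, after $\D$ hits $\nabla\phi\cdot\nabla\psi_t^n$), ``absorbing the $\nabla\D^2\psi_t^n$ piece into the $b$-term.'' Any absorption into the $b$-weighted dissipation costs a factor $b^{-1}$ and therefore destroys exactly the $b$-uniformity that the proposition asserts; at $b=0$ there is nothing to absorb into at all. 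There is no way to close this estimate by energy methods alone: the paper explicitly observes that the terms containing $\nLtwo{\D^2\phi}\nLtwo{\D p_t}$ force a different argument. What the paper actually does is read the PDE as a first-order-in-time equation for $\D\psi$, namely $(b\partial_t+c^2)\D\psi=r$ with $r=\alpha\psi_{tt}-\sigma\nabla\phi\cdot\nabla\psi_t$, solve it explicitly with the kernel $g'(t-s)$, $g(t)=-c^{-2}\exp(-c^2t/b)$, whose $L^1(0,t)$-norm is bounded by $c^{-2}$ \emph{uniformly in} $b$, and then invoke higher elliptic regularity (including a careful treatment of the nonzero boundary trace of $\D\psi$, which is driven by $(\sigma\nabla\phi\cdot\nabla\psi_t)\vert_{\partial\Omega}$). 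This parabolic-smoothing/elliptic-regularity step is the genuinely new ingredient relative to the Westervelt case, and your sketch does not contain it or a substitute for it.

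A second, independent problem is that the norms your two layers would deliver do not add up to \eqref{LinKuzn:Main_energy_est}. Testing the time-differentiated equation with $\tilde\psi^n_{ttt}-\D\tilde\psi^n_{tt}$, as in \eqref{id_additionalreg}, controls $\nLtwoLtwo{\psi_{ttt}}$, $\esssup_t\nLtwo{\nabla\psi_{tt}}$, $\esssup_t\nLtwo{\D\psi_t}$ and $b\nLtwoLtwo{\D\psi_{tt}}^2$ --- each one spatial order below the quantities $\nLtwoLtwo{\nabla\psi_{ttt}}$, $\esssup_t\nLtwo{\D\psi_{tt}}$, $\esssup_t\nLtwo{\nabla\D\psi_t}$ and $b\nLtwoLtwo{\nabla\D\psi_{tt}}^2$ appearing in the claimed estimate; your first layer does not supply these either. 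The paper instead works with the time-differentiated problem for $p=\psi_t$ from the outset and tests it with $\D^2 p_t=\D^2\psi_{tt}$ (the exact analogue of the Westervelt test $\D^2 p_t^n$), which produces all of the required time--space norms in one identity, with $\nLtwo{\nabla\psi_{ttt}}$ recovered from the PDE as in \eqref{LinWest:id_ptnn}--\eqref{LinWest:bound_ptnn}; only $\esssup_t\nLtwo{\D^2\psi}$ is obtained separately, by the ODE argument described above. Finally, note that moving three Laplacians onto $\nabla\phi\cdot\nabla\psi_t^n$ generates boundary integrals that do not vanish (this product has no reason to vanish on $\partial\Omega$ at the Galerkin level), whereas the paper's choice of test function only requires the observation that the \emph{entire} right-hand side of the time-differentiated equation vanishes on $\partial\Omega$ via the PDE.
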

\begin{proof}
We start by proving existence of solutions to the time-differentiated version of our original problem \eqref{ibvp_linKuzn}. We thus consider
\begin{equation} \label{ibvp_linKuzn_timediff}
\left\{
\begin{aligned}
\alpha p_{tt} - b \D p_t -c^2\D p =&-\alpha_t p_t + \sigma\nabla q\cdot\nabla p+\sigma\nabla\phi\cdot\nabla p_t \hspace*{-4mm}  &&\ \text{ in }\Omega\times(0,T), \\[1mm]
p=&\,0  &&\ \text{ on } \partial \Omega\times(0,T),\\[1mm]
(p, p_t)=&\,(\psi_1, \psi_2)  &&\ \mbox{ in }\Omega\times \{0\},
\end{aligned} \right.
\end{equation}
where the function $\psi_2$ is defined by \eqref{compat_Kuz}.

The proof can be carried out as before via smooth Faedo--Galerkin approximations in space, by projecting the problem \eqref{ibvp_linKuzn_timediff} onto the span $V_n$ of the first $n$ eigenfunctions of the Laplacian pointwise in time. \\
\indent We focus here on deriving the crucial uniform bound for the approximate solution since the remaining steps are analogous to the proof of Proposition~\ref{Prop:LinWest}. For simplicity of notation, we drop the superscript $n$ in the approximate solution.
\\
\indent In the first step, we multiply \eqref{ibvp_linKuzn_timediff} by $\D^2 \pt$ and integrate over $\Om$:
\begin{equation}\label{Kuznetsov_p}
\prodLtwo{\alpha p_{tt} - b \D p_t -c^2\D p}{\D^2 \pt}=\prodLtwo{-\alpha_t p_t + \sigma\nabla q\cdot\nabla p + \sigma\nabla\phi\cdot\nabla p_t}{\D^2 \pt}. 
\end{equation}
 When deriving the energy bound, we can rely on the identity
\[
\begin{aligned}
\prodLtwo{\alpha p_{tt}}{\D^2 p_t} 
=&\,\, \prodLtwo{\D[\alpha p_{tt}]}{\D p_t} \\
=&\, \frac{\textup{d}}{\textup{d}t}\frac12 \prodLtwo{\alpha \D p_t}{\D p_t} +\frac12 \kappa \prodLtwo{q_t \D p_t}{\D p_t}
-\prodLtwo{\kappa \D q\, p_{tt} +\kappa\nabla q\cdot\nabla p_{tt}}{\D p_t}\\
=&\, \frac{\textup{d}}{\textup{d}t}\frac12 \prodLtwo{\alpha \D p_t}{\D p_t} + \prodLtwo{f_1(\phi,q,p)}{\D p_t}, 
\end{aligned}
\]
where we have used the fact that $\D p$ vanishes on the boundary for smooth approximations of the problem and introduced the short-hand notation
\begin{equation}
\begin{aligned}
f_1(\phi,q,p) = \frac12 \kappa q_t \D \pt-\kappa \D q \ptt - \kappa\nabla q \cdot \nabla \ptt.
\end{aligned}
\end{equation}
Recalling that $\ptt=\D\pt=\D p=0$ on $\partial \Om$, which via the PDE implies that also $\nabla q \cdot \nabla p+\nabla \phi \cdot \nabla \pt$ vanishes on the boundary, we further find that
\begin{equation}
\begin{aligned}
&\prodLtwo{-\alpha_t p_t + \sigma\nabla q\cdot\nabla p + \sigma\nabla\phi\cdot\nabla p_t}{\D^2 \pt}\\
=&\,\begin{multlined}[t]\prodLtwo{\D[-\alpha_t p_t]}{\D \pt}+\sigma\prodLtwo{\D[\nabla q\cdot\nabla p + \nabla\phi\cdot\nabla p_t]}{\D \pt}.
\end{multlined}
\end{aligned}
\end{equation}
Furthermore, we have
\[
\begin{aligned}
&\D[-\alpha_t p_t + \sigma\nabla q\cdot\nabla p + \sigma\nabla\phi\cdot\nabla p_t]\\
=&\,\begin{multlined}[t] \kappa \Delta q_t \, p_t + 2\kappa\nabla q_t\cdot\nabla p_t + \kappa q_t\,\D p_t
+ \sigma\nabla \D q\cdot\nabla p+ 2\sigma D^2 q:D^2 p\\+ \sigma\nabla q\cdot\nabla \D p
+ \sigma\nabla \D \phi\cdot\nabla p_t+ 2\sigma D^2 \phi:D^2 p_t+ \sigma\nabla \phi\cdot\nabla \D p_t \end{multlined}\\
=&\, f_2(\phi,q,p)+\sigma\nabla \phi\cdot\nabla \D p_t, 
\end{aligned}
\]
where we have introduced another short-hand notation
\begin{equation}
\begin{aligned}
f_2(\phi,q,p)= \kappa \Delta q_t \, p_t + 2\kappa\nabla q_t\cdot\nabla p_t + \kappa q_t\,\D p_t
+ \sigma\nabla \D q\cdot\nabla p+ 2\sigma D^2 q:D^2 p\\+ \sigma\nabla q\cdot\nabla \D p
+ \sigma\nabla \D \phi\cdot\nabla p_t+ 2\sigma D^2 \phi:D^2 p_t\,.
\end{aligned}
\end{equation}
By elliptic regularity the Hessian $D^2 v=(\partial_{x_i} \partial_{x_j} v)_{i,j}$ satisfies 
{
\[
\|D^2 v\|_{L^p}\leq C_{\textup{H}} \|\Delta v\|_{L^p} 
\quad \mbox{ for all }v\in H_0^1(\Omega)\cap W^{2,p}(\Omega)\, \quad p\in (1,6],
\] 
see, e.g., \cite[Theorem 2.4.2.5]{Grisvard}.
% restriction to $p\leq6$ is needed to estimate the zero order term on the rhs of (2,3,1,1) by means of $C_{PF}\|\nabla v\|$
}
Due to the PDE, we know that
\begin{equation}\label{pttKuz}
\begin{aligned}
p_{tt}=&\,\ooal\left(b \D p_t +c^2\D p + \kappa q_t p_t + \sigma\nabla q\cdot\nabla p + \sigma\nabla\phi\cdot\nabla p_t\right),\\
\nabla p_{tt}=&\, \begin{multlined}[t]\kappa\alpha^{-2}\nabla q\,\left(b \D p_t +c^2\D p + \kappa q_t p_t + \sigma\nabla q\cdot\nabla p + \sigma\nabla\phi\cdot\nabla p_t\right)\\
 +\ooal \left(b \nabla\D p_t +c^2\nabla\D p + \kappa \nabla q_t p_t+ \kappa q_t \nabla p_t
+ \sigma D^2 q\,\nabla p \right. \\ \left.+ \sigma D^2 p\,\nabla q + \sigma D^2\phi\,\nabla p_t + \sigma D^2 p_t\,\nabla\phi
\right). \end{multlined}
\end{aligned}
\end{equation}
Altogether, we have the energy identity 
\[
\begin{aligned}
&\frac12\ddt\nLtwo{\sqrt{\alpha}\D p_t}^2 + b \nLtwo{\nabla \Delta p_t}^2 + c^2 \frac12\frac{\textup{d}}{\textup{d}t} \nLtwo{\nabla \D p}^2 \\[1mm]
&= \begin{multlined}[t]\prodLtwo{-f_1(\phi,q,p)+f_2(\phi,q,p)+\sigma\nabla \phi\cdot\nabla \D p_t}{\D p_t}.
\end{multlined}
\end{aligned}
\]
We can estimate the right-hand side by employing the following bound:
\[
\begin{aligned}
\nLtwo{f_1(\phi,q,p)}\leq&\, \begin{multlined}[t] \frac12|\kappa| \CHtwo \nLtwo{\D q_t}\nLtwo{\D p_t}
\\+  (|{\kappa}|\CHtwo+|\kappa|\CHone^2) \nLtwo{\nabla \D q} \nLtwo{\nabla p_{tt}}. \end{multlined}
\end{aligned}
\]
Furthermore, by virtue of identities \eqref{pttKuz}, we have
\[
\begin{aligned}
\nLtwo{\nabla p_{tt}}\leq&\, \begin{multlined}[t] 
\frac{|\kappa|}{\ulal^2} \CHtwo \nLtwo{\nabla \D q} 
\left(b\nLtwo{\D p_t}+c^2\nLtwo{\D p}
+\CHone^2 \left(|\kappa|\nLtwo{\nabla q_t}\nLtwo{\nabla p_t} \right. \right.\\ \left. \left.
+ \abssig \nLtwo{\D q}\nLtwo{\D p} + \abssig \nLtwo{\D \phi}\nLtwo{\D p_t}\right) \vphantom{\CHtwo}\right)\\
+\frac{1}{\ulal} \left(b \nLtwo{\nabla \D p_t}+c^2\nLtwo{\nabla \D  p}
+2|\kappa| \CHone \CPF^{1/2} \nLtwo{\D q_t}\nLtwo{\D p_t}\right. \\ \left.
+\abssig C_{\textup{H}} (\CHtwo+\CHone^2) (\nLtwo{\nabla \D q}\nLtwo{\D p}+\nLtwo{\nabla \D \phi}\nLtwo{\D p_t})
\right). \end{multlined}
\end{aligned}
\]
We note that also
\[
\begin{aligned}
\nLtwo{f_2(\phi,q,p)}\leq&\, \begin{multlined}[t] 
2|\kappa|(\CHtwo+\CHone^2) \nLtwo{\D q_t}\nLtwo{\D p_t} \\
+ 2\abssig (\CHtwo+C_{\textup{H}}^2 \CHone ^2) \nLtwo{\nabla \D q}\nLtwo{\nabla \D p} \\
+ \abssig (C_{\textup{H}} \CHtwo+2 \CHone^2) \nLtwo{\D^2\phi}\nLtwo{\D p_t} \end{multlined}
\end{aligned}
\]
and
\[
\begin{aligned}
2 \prodLtwo{\nabla \phi\cdot\nabla \D p_t}{\D p_t} = \prodLtwo{\nabla \phi}{\nabla (\D p_t)^2} =&\, \prodLtwo{\D \phi}{(\D p_t)^2}\\
\leq&\, \CHtwo  \nLtwo{\Delta^2\phi}\nLtwo{\D p_t}^2. 
\end{aligned}
\]
Under the regularity assumptions \eqref{reg_qphi_linKuz}, similarly to the proof of Proposition \ref{Prop:LinWest}, by using Young's and Gronwall's inequalities, we arrive at an energy estimate of the form
\begin{equation} \label{LinKuzn:Lower_energy_est}
\begin{aligned}
& \begin{multlined}[t] \nLtwoLtwo{\nabla \ptt}^2\,+\sup_{t \in (0,T)} \nLtwo{\D \pt (t)}^2\,+\sup_{t \in (0,T)} \nLtwo{\nabla \D p(t)}^2 
+b\nLtwoLtwo{\nabla \D \pt}^2
\end{multlined}\\
\lesssim&\, \begin{multlined}[t] \nLtwo{\nabla \D \psi_1}^2+\nLtwo{\D \psi_2}^2,
\end{multlined}
\end{aligned}
\end{equation}
first of all, for the Galerkin approximations. Via weak limits we obtain existence, uniqueness, and the energy estimate \eqref{LinKuzn:Lower_energy_est} also for a solution to \eqref{ibvp_linKuzn_timediff}.
Integrating with respect to time and using compatibility of the initial data \eqref{compat_Kuz} yields a unique solution to \eqref{ibvp_linKuzn}.\\
\paragraph{\bf Additional regularity}
The terms containing $\nLtwo{\Delta^2\phi}\nLtwo{\D p_t}$ above and in $f_2$ show that another energy estimate is needed to obtain an overall bound that is independent of $b$ for the nonlinear problem. To this end, we exploit elliptic regularity and the fact that by the PDE, $\D\psi$ satisfies
\begin{equation}\label{Deltapsi}
(b\partial_t+c^2)\D \psi = r \mbox{ with } r=\alpha p_t -\sigma \nabla\phi\cdot\nabla p.
\end{equation}
Thus in case $b>0$, we have
\begin{equation}\label{Deltapsioft}
\D \psi (t) = g_0(t) \D \psi(0) + \int_0^t g'(t-s) r(s)\ds
\end{equation}
with \[g(t)=-c^{-2} \exp{\left(-\frac{c^2}{b} t\right)},\qquad g_0(t)=\exp{\left(-\frac{c^2}{b} t\right)}\in[0,1].\] 
By positivity of $g'$, we have 
\[\|g'\|_{L^1(0,t)}=\int_0^t g'(s)\ds = g(t)-g(0) = c^{-2}\left(1-\exp{\left(-\frac{c^2}{b} t\right)}\right)\leq c^2.\]
Therefore,
\begin{equation}\label{L2Deltapsi}
\nLtwo{\D^2 \psi (t)} 
\leq \nLtwo{\D^2 \psi(0)} + c^{-2} \|\D r\|_{L^\infty(0,t;L^2(\Omega)}.
\end{equation}
In case $b=0$, estimate \eqref{L2Deltapsi} (without the $\nLtwo{\D^2 \psi(0)}$ term) immediately follows from \eqref{Deltapsi}. The right hand-side norm can be estimated as follows:
\begin{equation}
\begin{aligned}
&\|\D r\|_{L^\infty(L^2)}\\
=&\, \|\kappa \D q \, p_t + 2\kappa\nabla q\cdot\nabla p_t + \alpha\,\D p_t
+ \sigma\nabla \D \phi\cdot\nabla p+ 2\sigma D^2 \phi:D^2 p+ \sigma\nabla \phi\cdot\nabla \D p
\|_{L^\infty(L^2)}\\
\leq&\, \begin{multlined}[t]{(1+|\kappa| (2C_{H^2, L^\infty}+2C_{\textup{H}}^2 C_{H^1, L^4}^2)}
\|\D q\|_{L^\infty(L^2)} )\|\D p_t\|_{L^\infty(L^2)} \\
+2 |\sigma| ({C_{\textup{H}}}
C_{H^2, L^\infty}+C_{\textup{H}}^2 C_{H^1, L^4}^2)
\|\nabla \D \phi\|_{L^\infty(L^2)} \|\nabla \D p\|_{L^\infty(L^2)}\,,\end{multlined}
\end{aligned}
\end{equation}
and therefore bounded by means of the already established estimate \eqref{LinKuzn:Lower_energy_est}.
{
Here we have used the elliptic regularity result (see, e.g., \cite[Lemma 1, page 34]{thomee2006galerkin})
\[
\|v\|_{H^3}\leq C_{\textup{H}} \|(-\Delta)^{3/2} v\|_{L^2} 
\quad \mbox{ for all }v\in H_0^1(\Omega)\cap H^3(\Omega),
\] 
and estimated 
\[
\|\nabla p\|_{L^\infty}
\leq C_{H^2, L^\infty} \|p\|_{H^3}
\leq C_{H^2, L^\infty} C_{\textup{H}} \|(-\Delta)^{3/2} p\|_{L^2}
= C_{H^2, L^\infty} C_{\textup{H}} \|\nabla\Delta p\|_{L^2}\,,
\]
and in the same manner bounded $\|\nabla \phi\|_{L^\infty}$, due to the fact that $\Delta p$ and $\Delta \phi$ vanish on $\partial\Omega$.
}
Furthermore, by \eqref{Deltapsioft}, the function $\psi$ satisfies the boundary conditions 
\begin{equation}
\begin{aligned}
&\psi\vert_{\partial\Omega}=0\,, \\
&
\D\psi\vert_{\partial\Omega}={\mathrm{bdy}}:=\begin{cases}
g_0(t) \D \psi\vert_{\partial\Omega}(0) + \int_0^t g'(t-s)(\sigma \nabla\phi\cdot\nabla p)\vert_{\partial\Omega}(s)\ds&\mbox{ if }b>0\\
(\sigma \nabla\phi\cdot\nabla p)\vert_{\partial\Omega}&\mbox{ if }b=0\,,
\end{cases}
\end{aligned}
\end{equation}
where 
\begin{equation}
\begin{aligned}
\|(\sigma \nabla\phi\cdot\nabla p)\vert_{\partial\Omega}\|_{L^\infty(0,T;H^{3/2}(\partial\Omega))}
\leq C_{\textup{tr}} \|\sigma \nabla\phi\cdot\nabla p\|_{L^\infty(0,T;{H^2(\Omega)})}
\end{aligned}
\end{equation}
can again be estimated by means of \eqref{LinKuzn:Lower_energy_est}.
\\
\indent Thus we can invoke higher elliptic regularity \cite[Theorem 8.14]{salsa2015pde} to first conclude 
\[
\|\psi\|_{L^\infty(0,T;H^4(\Omega))}\leq C_{\textup{H}}\Bigl( \|\D\psi\|_{L^\infty(0,T;H^2(\Omega)}+\|\psi\|_{L^\infty(L^2)}\Bigr)
\]
and then further estimate  
\[
\|\D\psi\|_{L^\infty(0,T;H^2(\Omega))}\leq C_{\textup{H}}\Bigl( \|\D r\|_{L^\infty(L^2)} + {\|\mathrm{bdy}\|_{L^\infty(H^{3/2}(\partial \Omega))}} + \|\psi\|_{L^\infty(L^2)}\Bigr).
\]

Altogether, by taking into account the derived bounds, we obtain the energy estimate \eqref{LinKuzn:Main_energy_est}; cf. \eqref{LinKuzn:Lower_energy_est}, where we estimate $\nLtwo{\D \psi_2}^2$ by means of \eqref{compat_Kuz}.
\end{proof}
\subsection{Uniform bounds for the Kuznetsov equation}
We next employ a fixed-point argument to analyze the Kuznetsov equation for small data as well as obtain a uniform in $b$ energy bound for its solution. To this end, we introduce the mapping
\begin{equation}
\TK:\phi \mapsto \psi,
\end{equation}
where, similarly to before, $\phi$ will belong to a suitably chosen ball in the space $\spaceK$, and $\psi$ will solve the linearized Kuznetsov equation
\begin{equation} \label{LinKuznetsov}
\alpha\psi_{tt} -c^2\D \psi - b \D \psi_t =\, \sigma\nabla\phi\cdot\nabla\psi_t
\end{equation}
with $\alpha=1-\kappa \phi_t$ and initial data $(\psi(0), \psi_t(0))=(\phi(0), \phi_t(0))=(\psi_0, \psi_1)$. We note that the mapping $\TK$ is well-defined on account of Proposition~\ref{Prop:LinKuzn}, whose assumptions we verify below.
\begin{theorem}\label{Thm:WellpKuzn}
Let $\Omega \subset \R^n$, where $n \in \{1, 2, 3\}$, be bounded and $C^4$ regular. Let $\kappa$, $\sigma \in \R$. Furthermore, let $b \in [0, \bar{b})$ and let the initial data $(\psi_0, \psi_1) $ satisfy the regularity assumption \eqref{IC_Kuznetsov} with
\begin{equation}
\|\psi_0\|^2_{H^4}+\|\psi_1\|^2_{H^3} \leq \delta^2.
\end{equation}
 There exist $\tilde{\delta}>0$ and a final time $T=T(\delta)>0$, independent of $b$, such that if 
\begin{equation}
%\|\psi_0\|^2_{H^3}+\|\psi_1\|^2_{H^2}+ b\|\nabla \D \psi_1\|^2_{L^2} \leq \tilde{\delta}^2, 
{\|\psi_0\|^2_{H^2}+\|\psi_1\|^2_{H^2} \leq \tilde{\delta}^2,}
\end{equation}
then there exists a unique solution $\psi \in \spaceK$ of the problem 
\begin{equation} \label{ibvp_West}
\left\{
\begin{aligned}
\psi_{tt}- b \D \psi_t - c^2\D \psi  =&\, \frac12 (\kappa \psi_t^2 + \sigma |\nabla \psi|^2)_{t}  &&\quad\text{ in }\Omega\times(0,T), \\[1mm]
\psi=&\,0  &&\quad\text{ on } \partial \Omega\times(0,T),\\[1mm]
(\psi, \psi_t)=&\,(\psi_0, \psi_1)  &&\quad\mbox{ in }\Omega\times \{0\},
\end{aligned} \right.
\end{equation}
where $\spaceK$ is defined in \eqref{regularity_Kuzn}. Furthermore, the solution $\psi$ satisfies the estimate \eqref{LinKuzn:Main_energy_est}.
\end{theorem}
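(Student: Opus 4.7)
The plan is to mirror Theorem~\ref{Thm:West_Wellposedness} and apply Banach's fixed-point theorem to the mapping $\TK:\phi\mapsto\psi$, using Proposition~\ref{Prop:LinKuzn} as the linear workhorse. The natural ball on which to iterate is
\[
\MK=\Bigl\{\phi\in\spaceK\,:\,\phi(0)=\psi_0,\ \phi_t(0)=\psi_1,\ \phi_{tt}(0)=\psi_2,\ \|\phi\|_{\spaceK}\le\RK,\ \|\phi_t\|_{L^\infty(L^\infty)}\le\tfrac{1}{2|\kappa|}\Bigr\},
\]
where $\psi_2$ is determined by the compatibility condition \eqref{compat_Kuz} and $\RK^2$ is chosen proportional to the right-hand side of \eqref{LinKuzn:Main_energy_est} evaluated at the given initial data. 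The last constraint secures non-degeneracy $\alpha=1-\kappa\phi_t\ge 1/2$, hence \eqref{nondegeneracy_Kuzn} holds with $\ulal=1/2$, $\olal=3/2$.

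For the self-mapping property, any $\phi\in\MK$ satisfies the hypotheses of Proposition~\ref{Prop:LinKuzn} with uniform-in-$b$ bounds on $\|\alpha\|_{\spaceK_{\alpha}}$ and $\|\phi\|_{L^\infty(\Honefour)}$ controlled by $\RK$, and the compatibility condition for $\psi=\TK\phi$ is inherited from that of $\phi$ since both share the initial triple. Hence \eqref{LinKuzn:Main_energy_est} delivers
\[
\|\psi\|_{\spaceK}^2\le C_1\exp\bigl(C_2(\RK+\RK^2+\RK^3)T\bigr)\,\delta^2,
\]
which is at most $\RK^2$ for $T=T(\delta)$ short enough (in analogy with \eqref{smallnessT}). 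To recover the remaining $L^\infty$ constraint on $\psi_t$, I would derive a lower-order uniform-in-$b$ estimate by testing the time-differentiated form of \eqref{LinKuznetsov} with a suitable combination in the spirit of \eqref{id_additionalreg}, yielding $\|\psi_t\|_{L^\infty(H^1)}^2\lesssim \exp(C\RK T)(\|\psi_0\|_{H^2}^2+\|\psi_1\|_{H^2}^2)$. Interpolating by Agmon's inequality $\|\psi_t\|_{L^\infty}\le C_{\textup{A}}\|\psi_t\|_{H^1}^{1/2}\|\psi_t\|_{H^2}^{1/2}$ together with $\|\psi_t\|_{L^\infty(H^2)}\lesssim\RK$ from the higher-order bound reproduces the Kuznetsov analogue of \eqref{CAC1tilC2til}; the smallness of $\tilde\delta$ then closes the $\tfrac{1}{2|\kappa|}$ constraint.

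For contractivity, take $\phi^{(1)},\phi^{(2)}\in\MK$, let $\psi^{(i)}=\TK\phi^{(i)}$, and write $\bar\psi=\psi^{(1)}-\psi^{(2)}$, $\bar\phi=\phi^{(1)}-\phi^{(2)}$. The difference solves the zero-initial-data problem
\[
(1-\kappa\phi_t^{(1)})\bar\psi_{tt}-c^2\D\bar\psi-b\D\bar\psi_t=\kappa\bar\phi_t\psi^{(2)}_{tt}+\sigma\nabla\phi^{(1)}\cdot\nabla\bar\psi_t+\sigma\nabla\bar\phi\cdot\nabla\psi^{(2)}_t.
\]
Testing separately with $-\D\bar\psi_t$ and with $\bar\psi_{tt}$ exactly as in Proposition~\ref{Prop:Contraction}, estimating the right-hand side via the uniform $\MK$-bounds on $\phi^{(i)}$ together with the higher-order estimate \eqref{LinKuzn:Main_energy_est} for $\psi^{(i)}$, and invoking Gronwall's inequality produce a bound of the form $\|\bar\psi\|_X^2\le C\exp(CT)(T+1)\RK^2\|\bar\phi\|_X^2$ on the norm $X$ from \eqref{X_norm}. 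Shrinking $T=T(\delta)$ if necessary turns this into strict contraction, so Banach's theorem furnishes a unique fixed point in $\MK$; uniqueness in all of $\spaceK$ then follows by a Picard--Lindel\"of-style continuation argument, exactly as in Theorem~\ref{Thm:West_Wellposedness}.

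The main obstacle is the uniform-in-$b$ control of $\|\psi_t\|_{L^\infty(L^\infty)}$. In the Westervelt case Agmon applied directly to $p$ was sufficient because $\alpha=1-kp$, whereas here the non-degeneracy factor $\alpha=1-\kappa\psi_t$ forces $L^\infty$ smallness to be propagated on the time-differentiated variable. This is precisely what demands the regularity jump from $\Honethree\times\Honetwo$ (Westervelt) to $\Honefour\times\Honethree$ (Kuznetsov) and the distinct $H^2\times H^2$ smallness assumption on the lower-order norm of the data. A secondary technicality is that the initial triple $(\psi_0,\psi_1,\psi_2)$ must be preserved by $\TK$, which holds automatically since $\phi(0)=\psi_0$ and $\phi_t(0)=\psi_1$ fix the same $\psi_2$ through \eqref{compat_Kuz}.
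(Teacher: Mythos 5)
Your overall architecture -- Banach's fixed point on the ball $\MK$, Proposition~\ref{Prop:LinKuzn} for the self-mapping and the $\RK$ bound, a separate low-order uniform-in-$b$ estimate of $\|\psi_t\|_{L^\infty(H^1)}$ combined with Agmon's inequality to secure $\|\psi_t\|_{L^\infty(L^\infty)}\le\frac{1}{2|\kappa|}$ from smallness of $\|\psi_0\|_{H^2}+\|\psi_1\|_{H^2}$ -- coincides with the paper's proof, and that part is sound. The difference equation you write down is an equivalent splitting of \eqref{Kuzn_contractivity_eq}. The gaps are in the contraction step.

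First, you propose to contract in the norm $X$ of \eqref{X_norm} by testing with $-\D\opsi_t$ and $\opsi_{tt}$ ``exactly as in Proposition~\ref{Prop:Contraction}.'' This does not carry over, because of the gradient nonlinearity that is absent in the Westervelt case: testing $\sigma\nabla\phi^{(2)}\cdot\nabla\opsi_t$ (or, in your splitting, $\sigma\nabla\phi^{(1)}\cdot\nabla\opsi_t$) against $-\D\opsi_t$ produces $\intO(\nabla\phi\cdot\nabla\opsi_t)(-\D\opsi_t)\dx$, whose direct bound $\|\nabla\phi\|_{L^\infty}\|\nabla\opsi_t\|_{L^2}\|\D\opsi_t\|_{L^2}$ can only be absorbed into the left-hand side term $b\|\D\opsi_t\|^2_{L^2(L^2)}$ at the cost of a factor $b^{-1}$, destroying uniformity in $b$; and the integration by parts that would remove one derivative from $\opsi_t$ leaves the non-vanishing boundary term $\int_{\partial\Omega}(\partial_\nu\phi)(\partial_\nu\opsi_t)^2$, which again requires control of $\|\opsi_t\|_{H^2}$ that the $X$ norm does not provide uniformly in $b$. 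This is precisely why the paper retreats to the \emph{energy} norm $\textup{E}$ for the Kuznetsov contraction: it tests \eqref{Kuzn_contractivity_eq} only with $\opsi_t$, so that the problematic term becomes $\intO(\nabla\phi^{(2)}\cdot\nabla\opsi_t)\opsi_t\dx$, which the antisymmetry trick \eqref{self_m_1}--\eqref{self_m_2} converts into $-\frac12\intO\D\phi^{(2)}\,\opsi_t^2\dx$, controlled by $\|\D\phi^{(2)}\|_{L^\infty(L^\infty)}\|\opsi_t\|^2_{L^2(L^2)}$ with no $b$-dependence. You would either need to adopt this weaker topology (closedness of $\MK$ in $d_{\textup{E}}$ still holds by weak-$\star$ compactness) or supply a genuinely new argument for the $X$-norm estimate.

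Second, your claimed contraction factor $C\exp(CT)(T+1)\RK^2$ cannot be made smaller than one by shrinking $T$ alone: the theorem allows $\delta$ (hence $\RK$) to be large, and $(T+1)\RK^2\not\to0$ as $T\to0$. The paper is careful to extract a positive power of $T$ from every right-hand side term -- using that $\psi^{(2)}_{tt}$, $\nabla\psi^{(1)}_t$ lie in $L^\infty$-in-time spaces by the $\spaceK$ regularity, so their $L^2(0,T)$-in-time norms carry a factor $\sqrt{T}$ -- arriving at a prefactor of the form $C_1(1+T^{3/2})\sqrt{T}(\|\psi_0\|_{H^4}+\|\psi_1\|_{H^3})\exp(\cdots)$ in \eqref{est1_ContractivityKuzn} and the display following it, which does vanish as $T\to0$. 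Without this refinement your argument only closes under an additional smallness assumption on the high-order norm of the data, which is weaker than the stated theorem.
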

\begin{proof}
We begin by proving that the mapping $\TK:\phi \mapsto \psi$ is well-defined and a self-mapping on
	\begin{equation}\label{defM_Kuzn}
	\begin{aligned}
	M_{\textup{K}} = \left\{ \vphantom{\sup_{t \in (0,T)} \nLtwo{\D^2 \psi(t)}^2 
		\leq \RK^2 }\right. \phi \in \spaceK \, : \, &\phi(0)= \psi_0, \ \phi_t(0)=\psi_1,\\
&\ \left. \vphantom{\spaceK}\|\phi_t\|_{L^\infty(L^\infty)} \leq \frac{1}{2 |\kappa|}, \right. 
\ \left.\|\phi\|_{\spaceK} \leq \RK \  \vphantom{\sup_{t \in (0,T)} \nLtwo{\D^2 \psi(t)}^2 
	\leq \RK^2 } \right\},
	\end{aligned}
	\end{equation}
where the radius $R_{\textup{K}}$ will be specified below. We first note that the conditions of Proposition~\ref{Prop:LinKuzn} are satisfied. Indeed, regularity assumptions \eqref{reg_qphi_linKuz} and \eqref{IC_Kuznetsov} hold with the uniform bounds
\begin{equation}
\begin{aligned}
\|\phi\|_{L^\infty(H_\diamondsuit^4)} \leq\, \RK, \qquad
\|\alpha\|_{\spaceK_{\alpha}} \lesssim\, 1+ \RK.
\end{aligned}
\end{equation}
Furthermore, we have
\[
\begin{aligned}
\nLinfLinf{1-\alpha}\leq |\kappa|\nLinfLinf{\phi_t} \leq \frac12.
\end{aligned}
\]
We can thus take $\ulal=\frac12$ and $\olal=\frac32$ in \eqref{nondegeneracy_Kuzn}, and so the non-degeneracy condition is fulfilled as well. Then Proposition~\ref{Prop:LinKuzn} applied to $\psi=\TK(\phi)$ implies that the mapping is well-defined and, moreover,
 \begin{equation} %\label{LinKuzn:Main_energy_est}
 \begin{aligned}
  \|\psi\|^2_{\spaceK}  \leq\,  C_{\textup{lin}, \textup{K}}(T, \RK)(\nLtwo{\D^2 \psi_0}^2+\nLtwo{\nabla\D \psi_1}^2+\nLtwo{\D \nabla\psi_0}^2),
 \end{aligned}
 \end{equation}
 where $C_{\textup{lin}, \textup{K}}(T, \RK)$ is determined by the hidden constant in the linear bound \eqref{LinKuzn:Main_energy_est}
{ 
and by inspection of \eqref{LinKuzn:Lower_energy_est} can be seen to have a form similar to the Westervelt case \eqref{Clin_West}
\begin{equation}
C_{\textup{lin}}(T,R)= C_1 \exp{\left(C_2(|\kappa|+|\sigma|)(\RK+\RK^2+\RK^3)T\right)}, 
\end{equation}  
with constants $C_1$ and $C_2$ independent of $\RK$, $T$, and $b \in [0, \bar{b})$ for fixed $\bar{b}$. 
}
Therefore, provided that $\RK$ and final time $T$ are chosen so that \[C_{\textup{lin}, \textup{K}}(T,\RK) (\nLtwo{\D^2 \psi_0}^2+\nLtwo{\nabla\D \psi_1}^2+\nLtwo{\D \nabla\psi_0}^2) \leq \RK^2,\] the solution $\psi$ satisfies the $\RK^2$ bound in \eqref{defM_Kuzn}.\\
\indent To obtain the $1/{(2|\kappa|)}$ bound, we will, similarly to the Westervelt case, derive a uniform in $b$ estimate of $\|\psi_t \|_{L^\infty(H^1)}$ and then rely on Agmon's interpolation inequality. By multiplying the time-differentiated equation \eqref{LinKuznetsov} with $p_{t}$ and integrating over space and time, we arive at	
	\begin{equation} \label{first_est_selfm_Kuzn}
\begin{aligned}
&\frac12 \left\{\|\sqrt{\alpha(t)}p_t(t)\|^2_{L^2}+c^2\|\nabla p(t)\|^2_{L^2} \right\}\Big \vert_0^t+b \int_0^t\|\nabla p_t\|^2\dxs\\
=&-\frac12 \int_0^t(\alpha_t p_t, p_t)_{L^2}\ds+\sigma\int_0^t (\nabla q \cdot \nabla p+\nabla \phi \cdot \nabla p_t, p_t)\ds\\
\lesssim &\,\begin{multlined}[t] \|\alpha_t\|_{L^\infty(L^\infty)}\|p_t\|^2_{L^2(0,t;L^2)}+\|\nabla q\|_{L^\infty(L^\infty)}\|\nabla p\|_{L^2(0, t;L^2)}\|p_t\|_{L^2(0, t;L^2)}\\+\sigma\int_0^t (\nabla \phi \cdot \nabla p_t, p_t)\ds.   \end{multlined}
\end{aligned}
\end{equation}
To estimate the last term on the right, we can first rely on the identity
\begin{equation} \label{self_m_1}
\begin{aligned}
& \int_0^t \int_{\Omega} (\nabla \phi \cdot \nabla p_t) p_t \dxs \\
=&\, \int_0^t \int_{\Omega} (- \D \phi)  p_t p_t \dxs - \int_0^t \int_{\Omega} p_t (\nabla \phi \cdot \nabla p_t)  \dxs 
\end{aligned}
\end{equation}
since $p_t=0$ on $\partial \Omega$, which then leads to the bound
\begin{equation}  \label{self_m_2}
\begin{aligned}
\left|2\int_0^t \int_{\Omega} (\nabla \phi \cdot \nabla \opsi_t) p_t \dxs \right| 
=&\, \left | \int_0^t \int_{\Omega} (- \D \phi)  \opsi_t p_t \dxs \right|\\
\leq&\,  \|\D \phi\|_{L^\infty(0,t; L^\infty)}\|p_t\|_{L^2(0,t; L^2)}^2.
\end{aligned}
\end{equation}
Thus, utilizing the above estimate in \eqref{first_est_selfm_Kuzn}	and employing Gronwall's inequality leads to
\begin{equation} 
\begin{aligned}
&\|p_t(t)\|^2_{L^2}+\|\nabla p(t)\|^2_{L^2} \\
\leq&\, \vardbtilde{C}_1\exp(\vardbtilde{C}_2 \RK T)(\|\psi_2\|^2_{L^2}+\|\nabla \psi_1\|^2_{L^2} )\\
\leq&\,\vardbtilde{C}_1\exp(\vardbtilde{C}_2 \RK T)(\|\alpha(0)^{-1} (b \D \psi_1 +c^2\D \psi_0 + \sigma\nabla\psi_0\cdot\nabla\psi_1)\|^2_{L^2}+\|\nabla \psi_1\|^2_{L^2} ),
\end{aligned}
\end{equation}
where we have also relied on the compatibility of initial data \eqref{compat_Kuz} in the last line. From here, noting that
\[\|\nabla\psi_0\cdot\nabla\psi_1\|_{L^2} \leq \|\nabla \psi_0\|_{L^4}\|\nabla \psi_1\|_{L^4} \lesssim \|\nabla \psi_0\|_{H^1}\|\nabla \psi_1\|_{H^1},\] we conclude that there exist positive constants $\vardbtilde{C}_2$ and $\vardbtilde{C}_3$, independent of $b$, such that
\begin{equation} 
\begin{aligned}
\|p_t(t)\|^2_{L^2}+\|\nabla p(t)\|^2_{L^2} 
\leq\,\vardbtilde{C}_3\exp(\vardbtilde{C}_2 \RK T)(\|\D \psi_1\|^2_{L^2} +\|\D \psi_0\|^2_{L^2}+\|\nabla \psi_1\|^2_{L^2} )
\end{aligned}
\end{equation}
for all $t \in [0,T]$. The non-degeneracy bound follows from the above estimate and Agmon's inequality:
\[
\|p\|_{L^\infty(L^\infty)} \leq C_{\textup{A}}\vardbtilde{C}_4\exp(\vardbtilde{C}_5 RT)(\|\D \psi_1\|^2_{L^2} +\|\D \psi_0\|^2_{L^2}+\|\nabla \psi_1\|^2_{L^2} )^{1/4}\RK^{1/2},
\]
for some positive constants $\vardbtilde{C}_4$, $\vardbtilde{C}_5$ independent of $b$, if we choose $\tilde{\delta}>0$ small enough, so that
\[
C_{\textup{A}}\vardbtilde{C}_4\exp(\vardbtilde{C}_5 RT)\sqrt{\tilde{\delta}}\,\RK^{1/2} \leq \frac{1}{2 |\kappa|}.
\]
Therefore,
 \[\TK(\MK) \subset \MK.\]  
\indent We next prove strict contractivity of the mapping $\TK$ in the energy norm for sufficiently small data. To this end, we take $\phi^{(1)}$ and $\phi^{(2)}$ in $\MK$, and denote their difference by $\overline{\phi}= \phi^{(1)} -\phi^{(2)} $. Let then $\psi^{(1)}=\TK (\phi^{(1)})$ and $\psi^{(2)}=\TK (\phi^{(2)})$. The difference $\opsi=\psi^{(1)} -\psi^{(2)} \in  \spaceK$ solves the equation
\begin{equation} \label{Kuzn_contractivity_eq}
\begin{aligned}
(1-\kappa \phi_t^{(1)} )\opsi_{tt}-c^2 \Delta \opsi-b \Delta \opsi_t
= \kappa \overline{\phi}_t \psi^{(2)}_{tt}+\sigma \nabla \overline{\phi} \cdot \nabla \psi^{(1)}_t+\sigma \nabla \phi^{(2)} \cdot \nabla \opsi_t 
\end{aligned}
\end{equation}
with zero initial data. We test this equation with $\opsi_t$ and integrate over space and $(0,t)$ to arrive at the identity
\begin{equation} \label{id1_ContractivityKuzn}
\begin{aligned}
&\frac12\nLtwo{\sqrt{\alpha(t)}\,\opsi_t(t)}^2+\frac{c^2}{2}\nLtwo{\nabla \opsi(t)}^2+b \|\nabla \opsi_t\|_{L^2(0, t; L^2)}^2 \\
=&\,\begin{multlined}[t] -\frac{\kappa}{2} \int_0^t \intO \phi _{tt}^{(1)} \opsi^2_t \dxs +\kappa \int_0^t \intO \overline{\phi}_t\psi _{tt}^{(2)} \opsi_t \dxs\\
+  \sigma \int_0^t \int_{\Omega} (\nabla \overline{\phi} \cdot \nabla \psi^{(1)}_t+\nabla \phi^{(2)} \cdot \nabla \opsi_t) \opsi_t \dxs, \end{multlined}
\end{aligned}
\end{equation}
where $\alpha(t)=1-\kappa \phi^{(1)}_t(t)$. To treat the $\nabla \overline{\phi} \cdot \nabla \psi^{(1)}_t$ term on the right, we employ the following estimate: 
\begin{equation}
\begin{aligned}
\left|2\int_0^t \int_{\Omega} \nabla \overline{\phi} \cdot \nabla \psi^{(1)}_t \opsi_t \dxs \right| \leq&\, 2\|\nabla \psi^{(1)}_t\|_{L^2(0,t ;L^\infty)} \|\nabla \overline{\phi}\|_{L^\infty(0,t; L^2)} \|\opsi_t\|_{L^2(0,t; L^2)}.
\end{aligned}
\end{equation}
{Similarly to \eqref{self_m_1}--\eqref{self_m_2},} to estimate the $\nabla \phi^{(2)} \cdot \nabla \opsi_t$ term, we can rely on
\begin{equation} \label{est1_contractivity}
\begin{aligned}
\left|2\int_0^t \int_{\Omega} (\nabla \phi^{(2)} \cdot \nabla \opsi_t) \opsi_t \dxs \right| 
=&\, \left | \int_0^t \int_{\Omega} (- \D \phi^{(2)})  \opsi_t \opsi_t \dxs \right|\\
\leq&\,  \|\D \phi^{(2)}\|_{L^\infty(0,t; L^\infty)}\|\opsi_t\|_{L^2(0,t; L^2)}^2.
\end{aligned}
\end{equation}
Additionally, we can bound the two $\kappa$ terms on the right-hand side of \eqref{id1_ContractivityKuzn} as follows:
\begin{equation}
\begin{aligned}
&\left |-\frac12\kappa \int_0^t \intO \phi _{tt}^{(1)} \opsi^2_t \dxs +\kappa \int_0^t \intO \psi _{tt}^{(2)}\overline{\phi}_t \opsi_t \dxs \right| \\
\leq&\,\begin{multlined}[t] \frac12 |\kappa| \|\phi _{tt}^{(1)}\|_{L^\infty(0,t; L^\infty)}\|\opsi_t\|_{L^2(0,t; L^2)}^2 + |\kappa|\|\psi _{tt}^{(2)}\|_{L^2(0,t; L^\infty)} \|\overline{\phi}_t\|_{L^\infty(0,t; L^2)}\|\opsi_t\|_{L^2(0,t; L^2)}. \end{multlined}
\end{aligned}
\end{equation}
Employing the above estimates in \eqref{id1_ContractivityKuzn}, using Young's inequality, and then taking the supremum over 
$t \in (0, \tau)$ leads to
\begin{equation} \label{est1_ContractivityKuzn}
\begin{aligned}
&\frac12 \ulal\sup_{t\in(0,\tau)}\nLtwo{\opsi_t(t)}^2+\frac{c^2}{2}\sup_{t\in(0,\tau)}\nLtwo{\nabla \opsi(t)}^2+b \|\nabla \opsi_t\|^2_{L^2(0,\tau;L^2)} \\
\leq&\,\begin{multlined}[t] \frac12 |\kappa| \|\phi _{tt}^{(1)}\|_{L^\infty(L^\infty)}\|\opsi_t\|_{L^2(0,\tau;L^2)}^2+\frac12 |\kappa\|\psi _{tt}^{(2)}\|_{L^2(L^\infty)}|\|\opsi_t\|_{L^2(0,\tau;L^2)}^2  \\
+ \frac12|\kappa| \|\psi _{tt}^{(2)}\|_{L^2(L^\infty)}\|\overline{\phi}_t\|_{L^\infty(0,\tau;L^2)}^2
+\sigma\|\nabla \psi^{(1)}_t\|_{L^2(L^\infty)} \|\nabla \overline{\phi}\|_{L^\infty(0,\tau;L^2)}^2\\+\sigma\|\nabla \psi^{(1)}_t\|_{L^2(L^\infty)}\|\overline{\psi}_t\|_{L^2(0,\tau;L^2)}^2 +\sigma\|\D \phi^{(2)}\|_{L^\infty( L^\infty)}\|\opsi_t\|^2_{L^2(0,\tau;L^2)}. \end{multlined}
\end{aligned}
\end{equation}
Since the functions $\phi^{(1)}$ and $\phi^{(2)}$ belong to $\MK$ and functions $\psi^{(1)}$ and $\psi^{(2)}$ solve the linear problem, we have the uniform bounds 
\begin{equation}
\begin{aligned}
\|\phi _{tt}^{(1)}\|_{L^\infty(L^\infty)} \leq&\, \CHtwo \RK,  \\
\|\D \phi^{(2)}\|_{L^\infty( L^\infty)} \leq&\, \CHtwo \RK,\\
\|\psi _{tt}^{(2)}\|_{L^2(L^\infty)} \leq&\, \CHtwo \sqrt{T} C_{\textup{lin}, \textup{K}}(T,\RK) (\|\psi_0\|_{H^4}+\|\psi_1\|_{H^3}),  \\
\|\nabla \psi^{(1)}_t\|_{L^2(L^\infty)}  \leq& \, \CHtwo \sqrt{T}\|\nabla \psi^{(1)}_t\|_{L^\infty(H^2)} \\
 \leq&\, \begin{multlined}[t]\CHtwo \sqrt{T}\left((1+T+T^{3/2}C_{\textup{lin}, \textup{K}}(T,\RK))^2 +C_{\textup{lin}, \textup{K}}(T,\RK)^2\right)^{1/2}\\
 \times (\|\psi_0\|_{H^4}+\|\psi_1\|_{H^3}), \end{multlined}
\end{aligned}
\end{equation}
where we have estimated
\[
\begin{aligned}
\|\nabla \psi^{(1)}_t\|_{L^\infty(H^2)}^2
\leq&\,\|\nabla \psi^{(1)}_t\|_{L^\infty(L^2)}^2+\|\nabla \D \psi^{(1)}_t\|_{L^\infty(L^2)}^2
\\
\leq&\, (\nLtwo{\nabla \psi_1}+T\nLtwo{\nabla \psi_2}+T^{3/2}\nLtwoLtwo{\nabla \psi^{(1)}_{ttt}})^2
+\nLinfLtwo{\nabla \D\psi^{(1)}_t}^2.
\end{aligned}
\]
Therefore, applying Gronwall's inequality to \eqref{est1_ContractivityKuzn} results in the bound 
\begin{equation}
\begin{aligned}
\|\opsi\|^2_{\textup{E}} \leq&\, \begin{multlined}[t] C_1(1+T^{3/2}) \, \sqrt{T}\, (\|\psi_0\|_{H^4}+\|\psi_1\|_{H^3}) \\ \times \exp(C_2 (\RK+(1+T^{3/2})\sqrt{T}(\|\psi_0\|_{H^4}+\|\psi_1\|_{H^3}))\|\overline{\phi}\|^2_{\textup{E}}, \end{multlined}
\end{aligned}
\end{equation}
where the constants $C_1$, $C_2>0$ do not depend on $b$ nor $T$. By sufficiently reducing the final time so that 
\[ 
\begin{aligned}
\begin{multlined}[t]
C_1(1+T^{3/2}) \, \sqrt{T}\, (\|\psi_0\|_{H^4}+\|\psi_1\|_{H^3})\\ \times \exp(C_2 (\RK+(1+T^{3/2})\sqrt{T}(\|\psi_0\|_{H^4}+\|\psi_1\|_{H^3})) <1,
\end{multlined}
\end{aligned}
 \]
we can guarantee that
\[\|\psi^{(1)}-\psi^{(2)}\|_{\textup{E}} \leq r \|\phi^{(1)}-\phi^{(2)}\|_{\textup{E}} \ \text{ for some } r \in (0,1). \]
We note that it would be possible to impose smallness of data in the $H^4 \times H^3$ norm instead of reducing the final time, analogously to the proof of well-posedness for the Westervelt equation. \\
\indent We can argue similarly to the proof of Theorem~\ref{Thm:West_Wellposedness} that $(\MK, d_{\textup{E}})$ is a closed subset of a complete space with the metric induced by the energy norm $d_{\textup{E}}(x,y)=\|x-y\|_{\textup{E}}$. An application of Banach's Fixed-point theorem to $\mathcal{T}$ thus concludes the proof. 
\end{proof}	
%%%%%%%%%%%%%%%%%%%%%%%%%%%%%%%%%%%%%%%%%%%%
\section{The inviscid limit of the Kuznetsov equation}  \label{Sec:LimitKuzn}
Having obtained the uniform bounds with respect to $b$ in the $\spaceK$ norm, we are now ready to derive the convergence rate for solutions of the Kuznetsov equation as $b \rightarrow 0^+$.
\begin{theorem}\label{Thm:Kuzn_WeakLimit}
	Under the assumptions of Theorem~\ref{Thm:WellpKuzn}, the family of solutions $\{\psi^{(b)}\}_{b>0}$ of the Kuznetsov equation converges in the energy norm	to a solution $\psi$ of the inviscid Kuznetsov equation at a linear rate. In other words,
	\begin{equation} \label{linrate_Kuzn}
	\|\psi^{(b)}-\psi\|_{\textup{E}}\lesssim b \ \mbox{ as } b\to0.
	\end{equation}
\end{theorem}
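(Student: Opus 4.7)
My plan is to mirror the proof of Theorem~\ref{Thm:West_WeakLimit}: compare two solutions $\psi^{(b)}$ and $\psi^{(b')}$ of the Kuznetsov equation sharing the same initial data on a common existence interval $[0,T]$ (guaranteed for all $b,b'\in[0,\bar b)$ by Theorem~\ref{Thm:WellpKuzn}, since its smallness conditions are independent of $b$), derive a linear difference equation whose inhomogeneity is proportional to $(b-b')\D\psi^{(b')}_t$, and close an energy estimate in $\textup{E}$ via Gronwall. Setting $b'=0$ at the end yields the claimed linear rate.

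Concretely, writing $\opsi=\psi^{(b)}-\psi^{(b')}$ and collecting the nonlinear terms, I expect $\opsi$ to satisfy
\begin{equation*}
(1-\kappa\psi^{(b)}_t)\opsi_{tt}-c^2\D\opsi-b\D\opsi_t
=\kappa\opsi_t\psi^{(b')}_{tt}+\sigma\nabla\opsi\cdot\nabla\psi^{(b)}_t+\sigma\nabla\psi^{(b')}\cdot\nabla\opsi_t+(b-b')\D\psi^{(b')}_t
\end{equation*}
with zero initial data. I would then test with $\opsi_t$ and integrate over $\Omega\times(0,\tau)$. The left-hand side produces $\tfrac12\nLtwo{\sqrt{\alpha(\tau)}\opsi_t(\tau)}^2+\tfrac{c^2}{2}\nLtwo{\nabla\opsi(\tau)}^2+b\nLtwoLtwo{\nabla\opsi_t}^2$ up to a commutator $-\tfrac{\kappa}{2}\int\psi^{(b)}_{tt}\opsi_t^2$, with $\alpha=1-\kappa\psi^{(b)}_t\geq\ulal$ thanks to the self-mapping bound defining $\MK$. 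The cross terms $\kappa\opsi_t\psi^{(b')}_{tt}\opsi_t$ and $\sigma\nabla\opsi\cdot\nabla\psi^{(b)}_t\,\opsi_t$ are handled by Cauchy--Schwarz and Young's inequality, absorbing the uniform bounds $\nLtwoLinf{\psi^{(b')}_{tt}},\,\nLinfLinf{\nabla\psi^{(b)}_t}\lesssim \RK$ that Theorem~\ref{Thm:WellpKuzn} provides via the embeddings $H^2\hookrightarrow L^\infty$ and $H^3\hookrightarrow W^{1,\infty}$.

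The delicate term is $\sigma\nabla\psi^{(b')}\cdot\nabla\opsi_t$, which naively costs a derivative on $\opsi_t$ that the left-hand side cannot pay for as $b\to 0$. The trick, as in \eqref{est1_contractivity}, is integration by parts using $\opsi_t|_{\partial\Omega}=0$:
\begin{equation*}
\int_\Omega(\nabla\psi^{(b')}\cdot\nabla\opsi_t)\opsi_t\dx
=\tfrac12\int_\Omega\nabla\psi^{(b')}\cdot\nabla(\opsi_t^2)\dx
=-\tfrac12\int_\Omega\D\psi^{(b')}\,\opsi_t^2\dx,
\end{equation*}
which is controlled by $\nLinfLinf{\D\psi^{(b')}}\|\opsi_t\|_{L^2(L^2)}^2\lesssim \RK\|\opsi_t\|_{L^2(L^2)}^2$, the last bound following from $\psi^{(b')}\in L^\infty(0,T;\Honefour)$ and $H^2\hookrightarrow L^\infty$. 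The source $(b-b')\D\psi^{(b')}_t$ is the origin of the linear rate: since $\psi^{(b')}\in W^{1,\infty}(0,T;\Honethree)$ uniformly, $\nLtwoLtwo{\D\psi^{(b')}_t}\lesssim\RK$, and Young's inequality contributes a term $\lesssim (b-b')^2\RK^2+\nLtwoLtwo{\opsi_t}^2$.

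Assembling these bounds yields an inequality of the form
\begin{equation*}
\sup_{t\in(0,\tau)}\nLtwo{\opsi_t(t)}^2+\sup_{t\in(0,\tau)}\nLtwo{\nabla\opsi(t)}^2
\lesssim \RK\int_0^\tau\bigl(\nLtwo{\opsi_t}^2+\nLtwo{\nabla\opsi}^2\bigr)\ds+\RK^2(b-b')^2,
\end{equation*}
and Gronwall delivers $\|\opsi\|_{\textup{E}}\lesssim|b-b'|$ with a constant depending on $\RK$ and $T$ but not on $b,b'$. Choosing $b'=0$ gives \eqref{linrate_Kuzn}. The main obstacle is the gradient-coupled term $\sigma\nabla\psi^{(b')}\cdot\nabla\opsi_t$; controlling it via integration by parts is precisely what forces us to use uniform $L^\infty(L^\infty)$ control of $\D\psi^{(b')}$, which in turn is provided by the higher-order regularity space $\spaceK$ established in Proposition~\ref{Prop:LinKuzn}. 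In this sense, the stronger regularity needed for $b$-uniform existence in Theorem~\ref{Thm:WellpKuzn} pays off cleanly in the convergence analysis.
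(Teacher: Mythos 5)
Your proposal is correct and follows essentially the same route as the paper's proof: the same difference equation \eqref{Kuzn_Cauchy_eq}, testing with $\opsi_t$, the same integration-by-parts treatment of the $\sigma\nabla\psi^{(b')}\cdot\nabla\opsi_t$ term as in \eqref{est1_contractivity}, the uniform $\spaceK$-bounds from Theorem~\ref{Thm:WellpKuzn} to control the coefficients, and Gronwall followed by setting $b'=0$. The only differences are cosmetic (e.g., which mixed norms you place on $\psi^{(b')}_{tt}$), and your identification of the gradient-coupled term as the delicate one matches the paper's handling exactly.
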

\begin{proof}
The proof follows along the lines of the proof of contractivity in the previous theorem. Let $b, b' \in (0, \overline{b})$. Let $\psi^{(b)}$ and $\psi^{(b')}$ be the solutions of the Kuznetsov equation with the sound diffusivity $b$ and $b'$, respectively, and with homogeneous Dirichlet data,  where we assume that initial conditions and final time are chosen according to the well-posedness result of Theorem~\ref{Thm:WellpKuzn}.\\
\indent The difference $\opsi=\psi^{(b)}-\psi^{(b')}$ then solves the equation
\begin{equation} \label{Kuzn_Cauchy_eq}
\begin{aligned}
&(1-\kappa \psi_t^{(b)} )\opsi_{tt}-c^2 \Delta \opsi-b \Delta \opsi_t\\
=&\, \begin{multlined}[t] \kappa \opsi_t \psi^{(b')} _{tt}+\sigma \nabla \opsi \cdot \nabla \psi^{(b)}_t+\sigma \nabla \psi^{(b')} \cdot \nabla \opsi_t+(b-b')\Delta \psi_t^{(b')}, \end{multlined}
\end{aligned}
\end{equation}
supplemented by zero initial conditions. Testing with $\opsi_t$ and integrating with respect to space and time yields
\begin{equation} \label{id1_CauchyKuzn}
\begin{aligned}
&\frac12\nLtwo{\sqrt{\alpha(t)}\opsi_t(t)}^2+\frac{c^2}{2}\nLtwo{\nabla \opsi(t)}^2+b \|\nabla \opsi_t\|_{L^2(0, t; L^2)}^2 \\
=&\,\begin{multlined}[t] -\frac{\kappa}{2} \int_0^t \intO \psi _{tt}^{(b)} \opsi^2_t \dxs +\kappa \int_0^t \intO \psi _{tt}^{(b')} \opsi^2_t \dxs\\
+  \sigma \int_0^t \int_{\Omega} (\nabla \opsi \cdot \nabla \psi^{(b)}_t+\nabla \psi^{(b')} \cdot \nabla \opsi_t) \opsi_t \dxs\\
+ (b-b')\int_0^t \intO  \D \psi_t^{(b')} \opsi_t \dxs, \end{multlined}
\end{aligned}
\end{equation}
where $\alpha(t)=1-\kappa \psi^{(b)}_t(t)$. We can rely on the following estimate: 
\begin{equation}
\begin{aligned}
\left|\int_0^t \int_{\Omega} \nabla \opsi \cdot \nabla \psi^{(b)}_t \opsi_t \dxs \right| \leq \|\nabla \psi^{(b)}_t\|_{L^\infty(0,t ;L^\infty)} \|\nabla \opsi\|_{L^2(0,t; L^2)} \|\opsi_t\|_{L^2(0,t; L^2)}.
\end{aligned}
\end{equation}
Similarly to \eqref{est1_contractivity}, we also have the bound
\begin{equation}
\begin{aligned}
\left|\int_0^t \int_{\Omega} (\nabla \psi^{(b')} \cdot \nabla \opsi_t) \opsi_t \dxs \right| 
=&\, \left |\frac12 \int_0^t \int_{\Omega} (- \D \psi^{(b')})  \opsi_t \opsi_t \dxs \right|\\
\leq&\, \frac12 \|\D \psi^{(b')}\|_{L^\infty(0,t; L^\infty)}\|\opsi_t\|_{L^2(0,t; L^2)}^2.
\end{aligned}
\end{equation}
The two $\kappa$ terms on the right-hand side of \eqref{id1_CauchyKuzn} can be estimated as follows:
\begin{equation}
\begin{aligned}
&\left |-\frac12\kappa \int_0^t \intO \psi _{tt}^{(b)} \opsi^2_t \dxs +\kappa \int_0^t \intO \psi _{tt}^{(b')} \opsi^2_t \dxs \right| \\
\leq&\, |\kappa| \left(\frac12 \|\psi _{tt}^{(b)}\|_{L^\infty(0,t; L^\infty)}+ \|\psi _{tt}^{(b')}\|_{L^\infty(0,t; L^\infty)}\right)\|\opsi_t\|_{L^2(0,t; L^2)}^2. 
\end{aligned}
\end{equation}
Thus, employing the above estimates in \eqref{id1_CauchyKuzn}, Young's inequality, and taking the supremum over $t \in (0, \tau)$ leads to
\begin{equation} \label{est1_CauchyKuzn}
\begin{aligned}
&\frac12 \ulal\sup_{t\in(0,\tau)}\nLtwo{\opsi_t(t)}^2+\frac{c^2}{2}\sup_{t\in(0,\tau)}\nLtwo{\nabla \opsi(t)}^2+b \|\nabla \opsi_t\|^2_{L^2(0,\tau; L^2)} \\
\leq&\,\begin{multlined}[t] |\kappa| \left(\frac12\|\psi _{tt}^{(b)}\|_{L^\infty(L^\infty)}+\|\psi _{tt}^{(b')}\|_{L^\infty( L^\infty)}\right)\|\opsi_t\|_{L^2(L^2)}^2 \\
+  \frac12|\sigma|\|\nabla \psi^{(b)}_t\|^2_{L^\infty(L^\infty)} \|\nabla \opsi\|_{L^2( L^2)}^2+ \frac12|\sigma|\|\opsi_t\|^2_{L^2( L^2)}\\+\frac12|\sigma|\|\D \psi^{(b')}\|_{L^\infty(L^\infty)}\|\opsi_t\|_{L^2( L^2)}^2 + \frac12 (b-b')^2 \|\D \psi_t^{(b')}\|_{L^2( L^2)}^2+\frac12\|\opsi_t\|_{L^2( L^2)}^2. \end{multlined}
\end{aligned}
\end{equation}
By virtue of Theorem~\ref{Thm:WellpKuzn}, we have the uniform bound
\begin{equation}
\begin{aligned}
\begin{multlined}[t]  \nLinfLinf{\psi _{tt}^{(b)}}+\nLinfLinf{\psi _{tt}^{(b')}}+\|\nabla \psi^{(b)}_t\|^2_{L^\infty(L^\infty)} \\+\nLinfLinf{\D \psi^{(b')}} \lesssim R_{\textup{K}} + \RK^2, \end{multlined}
\end{aligned}
\end{equation}
and $\nLtwoLtwo{\D \psi_t^{(b')}}^2 \lesssim T R_{\textup{K}}^2$. Therefore applying Gronwall's inequality results in
\begin{equation}
\|\opsi\|_{\textup{E}}=\|\psi^{(b)}-\psi^{(b')}\|_{\textup{E}} \lesssim |b-b'|.
\end{equation}
Finally, we note that the estimate remains valid for $b'=0$ and thus \eqref{linrate_Kuzn} holds, as claimed.
\end{proof}

%%%%%%%%%%%%%%%%%%%%%%%%%%%%%%%%%%%%%%%%%%%%
\section*{Conclusion}
In this work, we have performed a convergence study of parabolic approximations to quasilinear wave equations of second order as the damping parameter $b$ tends to zero. Our research has been motivated by two classical nonlinear acoustics models, the Westervelt and Kuznetsov equations. As the analysis showed, the solutions of these equations converge in the energy norm with a linear rate to the solutions of the respective inviscid equations (with $b=0$), provided that the initial data are smooth and small and the final time short enough. For a short enough final time, the smallness condition on the initial data can be imposed in a lower-order space than the one needed to derive higher-energy bounds. In particular, the smallness condition on the data only arises from ensuring that the non-degeneracy assumption on $p$ and $\psi_t$ holds in the Westervelt and Kuznetsov equations, respectively.
%%%%%%%%%%%%%%%%%%%%%%%%%%%%%%%%%%%%%%%%%%%%
\section*{Acknowledgment}
\noindent
The work of the first author was supported by the Austrian Science Fund {\sc fwf} under the grants P30054 and DOC 78.

\begin{thebibliography}{10}

\bibitem{bongarti2021vanishing}
{\sc M.~Bongarti, S.~Charoenphon, and I.~Lasiecka}, {\em Vanishing relaxation
  time dynamics of the {J}ordan--{M}oore--{G}ibson--{T}hompson equation arising
  in nonlinear acoustics}, Journal of Evolution Equations,  (2021), pp.~1--32.

\bibitem{ChristovChristovJordan2007}
{\sc I.~Christov, C.~I. Christov, and P.~M. Jordan}, {\em {Modeling weakly
  nonlinear acoustic wave propagation}}, The Quarterly Journal of Mechanics and
  Applied Mathematics, 60 (2007), pp.~473--495.

\bibitem{constantin1988navier}
{\sc P.~Constantin and C.~Foias}, {\em {N}avier-{S}tokes equations}, University
  of Chicago Press, 1988.

\bibitem{dekkers2017cauchy}
{\sc A.~Dekkers and A.~Rozanova-Pierrat}, {\em Cauchy problem for the
  {K}uznetsov equation}, Discrete \& Continuous Dynamical Systems - {A}, 39
  (2019), pp.~277--307.

\bibitem{dorfler2016local}
{\sc W.~D{\"o}rfler, H.~Gerner, and R.~Schnaubelt}, {\em Local well-posedness
  of a quasilinear wave equation}, Applicable Analysis, 95 (2016),
  pp.~2110--2123.

\bibitem{evans2010partial}
{\sc L.~C. Evans}, {\em Partial Differential Equations}, vol.~2, Graduate
  Studies in Mathematics, AMS, 2010.

\bibitem{Grisvard}
{\sc P.~Grisvard}, {\em Elliptic problems in nonsmooth domains}, Pitman
  Advanced Pub. Program Boston, 1985.

\bibitem{hamilton1998nonlinear}
{\sc M.~F. Hamilton and D.~T. Blackstock}, {\em Nonlinear acoustics}, vol.~237,
  Academic press San Diego, 1998.

\bibitem{hoffelner2001finite}
{\sc J.~Hoffelner, H.~Landes, M.~Kaltenbacher, and R.~Lerch}, {\em Finite
  element simulation of nonlinear wave propagation in thermoviscous fluids
  including dissipation}, IEEE transactions on ultrasonics, ferroelectrics, and
  frequency control, 48 (2001), pp.~779--786.

\bibitem{hughes1977well}
{\sc T.~J. Hughes, T.~Kato, and J.~E. Marsden}, {\em Well-posed quasi-linear
  second-order hyperbolic systems with applications to nonlinear elastodynamics
  and general relativity}, Archive for Rational Mechanics and Analysis, 63
  (1977), pp.~273--294.

\bibitem{kaltenbacher2009global}
{\sc B.~Kaltenbacher and I.~Lasiecka}, {\em Global existence and exponential
  decay rates for the {W}estervelt equation}, Discrete \& Continuous Dynamical
  Systems-S, 2 (2009), p.~503.

\bibitem{KaltenbacherNikolic}
{\sc B.~Kaltenbacher and V.~Nikoli\'c}, {\em The
  {J}ordan--{M}oore--{G}ibson--{T}hompson equation: {W}ell-posedness with
  quadratic gradient nonlinearity and singular limit for vanishing relaxation
  time}, Mathematical Models and Methods in Applied Sciences, 29 (2019),
  pp.~2523--2556.

\bibitem{kaltenbacher2007numerical}
{\sc M.~Kaltenbacher}, {\em Numerical simulation of mechatronic sensors and
  actuators}, vol.~3, Springer, 2014.

\bibitem{kawashima1992global}
{\sc S.~Kawashima and Y.~Shibata}, {\em Global existence and exponential
  stability of small solutions to nonlinear viscoelasticity}, Communications in
  mathematical physics, 148 (1992), pp.~189--208.

\bibitem{kuznetsov1971equations}
{\sc V.~P. Kuznetsov}, {\em Equations of nonlinear acoustics}, Soviet Physics:
  Acoustics, 16 (1970), pp.~467--470.

\bibitem{lighthill2001waves}
{\sc M.~J. Lighthill and J.~Lighthill}, {\em Waves in fluids}, Cambridge
  university press, 2001.

\bibitem{Wilke}
{\sc S.~Meyer and M.~Wilke}, {\em Global well-posedness and exponential
  stability for {K}uznetsov's equation in ${L}_p$-spaces}, Evolution Equations
  \& Control Theory, 2 (2013), pp.~365--378.

\bibitem{mizohata1993global}
{\sc K.~Mizohata and S.~Ukai}, {\em The global existence of small amplitude
  solutions to the nonlinear acoustic wave equation}, Journal of Mathematics of
  Kyoto University, 33 (1993), pp.~505--522.

\bibitem{roubivcek2013nonlinear}
{\sc T.~Roub{\'\i}{\v{c}}ek}, {\em Nonlinear partial differential equations
  with applications}, vol.~153, Springer Science \& Business Media, 2013.

\bibitem{salsa2015pde}
{\sc S.~Salsa}, {\em Partial Differential Equations in Action: From Modelling
  to Theory - UNITEX)}, Springer Publishing Company, Incorporated, 2nd~ed.,
  2015.

\bibitem{showalter1976regularization}
{\sc R.~E. Showalter}, {\em Regularization and approximation of second order
  evolution equations}, SIAM Journal on Mathematical Analysis, 7 (1976),
  pp.~461--472.

\bibitem{soneson2019parabolic}
{\sc J.~Soneson}, {\em The parabolic approximation in therapeutic ultrasound},
  The Journal of the Acoustical Society of America, 146 (2019), pp.~3036--3036.

\bibitem{tani2017mathematical}
{\sc A.~Tani}, {\em Mathematical analysis in nonlinear acoustics}, in AIP
  Conference Proceedings, vol.~1907, AIP Publishing LLC, 2017, p.~020003.

\bibitem{Teschl_ode}
{\sc G.~Teschl}, {\em Ordinary Differential Equations and Dynamical Systems},
  Graduate studies in mathematics, American Mathematical Soc., 2012.

\bibitem{thomee2006galerkin}
{\sc V.~Thomee}, {\em Galerkin Finite Element Methods for Parabolic Problems},
  Lecture Notes in Mathematics, Springer Berlin Heidelberg, 2006.

\bibitem{tsuchiya1992simulation}
{\sc T.~Tsuchiya and Y.~Kagawa}, {\em A simulation study on nonlinear sound
  propagation by finite element approach}, Journal of the Acoustical Society of
  Japan (E), 13 (1992), pp.~223--230.

\bibitem{walsh2007finite}
{\sc T.~Walsh and M.~Torres}, {\em Finite element methods for nonlinear
  acoustics in fluids}, Journal of Computational Acoustics, 15 (2007),
  pp.~353--375.

\bibitem{westervelt1963parametric}
{\sc P.~J. Westervelt}, {\em Parametric acoustic array}, The Journal of the
  Acoustical Society of America, 35 (1963), pp.~535--537.

\end{thebibliography}
\end{document}